\newtheorem{thm}{Theorem}[section]
\newtheorem{prop}[thm]{Proposition}
\newtheorem{lem}[thm]{Lemma}
\newtheorem{cor}[thm]{Corollary}
\theoremstyle{definition}
\newtheorem{definition}[thm]{Definition}
\newtheorem{eg}[thm]{Example}
\theoremstyle{remark}
\newtheorem{rmk}[thm]{Remark}
\numberwithin{equation}{section}
\renewenvironment{proof}[1][\proofname]{\begin{trivlist}\item[\hskip \labelsep \itshape \bfseries #1{}\hspace{2ex}]}
{\qed\end{trivlist}}
\begin{document}
\title[Weitzenb\"{o}ck]{Invariant Rings of $ \mathbb{G}_{a} $-Representations are not always Finitely Generated in Positive Characteristic.}
\author{Stephen Maguire}
\email{maguire2@illinois.edu}

\begin{abstract}
    Hilbert's 14th Problem asks the following question.  Given a linear representation $ \beta: G \to \operatorname{GL}(\mathbf{V}) $ of a linear algebraic group over a field $ k $ is the ring $ S_{k}(\mathbf{V}^{\ast}) $ a finitely generated $ k $-algebra?  For reductive groups the answer is yes.  However, in general the answer is no.  Nagata provided one of the earliest counterexamples to this claim in \cite{Nagata} and his counterexample was extended by Shigeru Mukai in \cite{MukaiCounterexample}.  However, if $ G $ is equal to $ \mathbb{G}_{a} $ and the characteristic of $ k $ is equal to zero, then the answer to Hilbert's 14th problem is yes.  Roland Weitzenb\"{o}ck first proved this result in 1932 in an article in Acta Mathematica (see \cite{Weitz}).  Seshadri gave a more accessible proof in \cite{Seshadri}.

    While Roland Weitzenb\"{o}ck did not conjecture this claim, the question of whether the theorem that bears his name still holds if the characteristic of the base field $ k $ is $ p>0 $ is known as ``the Weitzenb\"{o}ck conjecture''.  We aim to use Mukai's strategy to give a counterexample to the Weitzenb\"{o}ck conjecture.  Namely, we construct a six dimensional representation over a field of positive characteristic such that the invariant ring is isomorphic to the Cox ring of the blow-up of a toric surface at the identity of the torus.  We use the geometry of the underlying toric variety to show that this Cox ring is not finitely generated.
\end{abstract}
\maketitle
\section{An Outline of Our Strategy.}
We would like to talk about our strategy for producing a counterexample to the Weitzenb\"{o}ck Conjecture.  This strategy is very much inspired by Shigeru Mukai's paper \cite{MukaiCounterexample} in which he describes an alternate proof of Nagata's counterexample and proves a much broader result.  In this paper, Mukai starts with the following $ 2n $-dimensional representation $ \beta $ of $ \mathbb{G}_{a}^{n} \cong \operatorname{Spec}(k[T]) $ on $ \mathbb{A}^{2n}_{k} \cong \operatorname{Spec}(k[X]) $ whose co-action is described below:
\begin{align*}
    x_{i} & \mapsto x_{i} \quad 1 \le i \le n \\
    x_{n+i} & \mapsto x_{n+i}+t_{i} x_{i} \quad 1 \le i \le n.
\end{align*}
Mukai then takes $ n $-points $ c_{i} = (c_{1,i},\dots,c_{s,i}) $ of $ \mathbb{A}^{s}_{k} $ in general position.  The sub-group $ \mathbb{G}_{a}^{n-s} \cong \mathcal{V}(\langle \sum_{i=1}^{n} c_{j,i}t_{i} \rangle_{j=1}^{s}) $ acts on $ \mathbb{A}^{2n}_{k} $ via the induced action.  If $ \widehat{x_{i}} $ is equal to $ \prod_{j \ne i, 1 \le j \le n} x_{j} $, and $ r_{1}(X),\dots,r_{s}(X) $ are the invariant functions described below:
\begin{equation*}
    r_{j}(X) = \sum_{i=1}^{n} c_{j,i}x_{n+i} \widehat{x_{i}}
\end{equation*}
then
\begin{equation*}
    k[x_{1}^{\pm 1},\dots,x_{n}^{\pm 1},x_{n+1},\dots,x_{2n}]^{\mathbb{G}_{a}^{n-s}} = k[x_{1}^{\pm 1},\dots,x_{n}^{\pm 1}, r_{1}(X),\dots,r_{s}(X)].
\end{equation*}
Mukai then proves that $ k[X]^{\mathbb{G}_{a}^{n-s}} $ is $ \mathbb{Z}^{n+1} $-multigraded.  Let $ B_{E} $ be the vector space generated by homogeneous, degree $ e_{n+1} $, polynomials $ G_{e_{n+1}}(y_{1},\dots,y_{s}) $ such that $ x_{i}^{e_{i}} $ divides $ G_{e_{n+1}}(r_{1}(X),\dots,r_{s}(X)) $ for $ 1 \le i \le n $.  The vector space $ B_{E} $ corresponds to a sub-space of invariant polynomials.  Namely, for each $ G_{e_{n+1}}(Y) \in B_{E} $, one obtains an invariant polynomial $ G_{e_{n+1}}(r_{1}(X),\dots,r_{s}(X))/\left(\prod_{i=1}^{n} x_{i}^{e_{i}}\right) $.  Mukai showed that $ k[X]^{\mathbb{G}_{a}^{n-s}} \cong \oplus_{E} B_{E} $.

If $ \mathfrak{p}_{i} $ is the ideal $ \langle x_{i} \rangle k[X] \cap k[r_{1}(X),\dots,r_{s}(X)] $, then the condition that $ x_{i}^{e_{i}} $ divide $ G_{e_{n+1}}(r_{1}(X),\dots,r_{s}(X)) $ is equivalent to the condition that $ G_{e_{n+1}}(R) \in \mathfrak{p}_{i}^{(e_{i})} $.  Moreover, if one imposes the grading on $ k[r_{1}(X),\dots,r_{s}(X)] $ where $ \deg(r_{i}(X)) $ is equal to one, then the ideals $ \mathfrak{p}_{i} $ are homogeneous.  If $ k[r_{1}(X),\dots,r_{s}(X)] $ is the homogeneous coordinate ring of $ \mathbb{P}^{s-1}_{k} $, then there is a point $ u_{i} $ corresponding to $ \mathfrak{p}_{i} $ via $ u_{i} = \mathcal{V}(\mathfrak{p}_{i}) $.  The points $ u_{1},\dots,u_{n} $ of $ \mathbb{P}^{s-1}_{k} $ are in general position.  Let $ W $ be the blow-up of $ \mathbb{P}^{s-1}_{k} $ at these $ n $-points, let $ E_{i} $ be the exceptional divisor corresponding to the blow-up at $ u_{i} $, and let $ \pi: W \to \mathbb{P}^{s-1}_{k} $ be the natural projection map.  The condition that $ G(R) \in \mathfrak{p}_{i}^{(e_{i})} $ for a degree $ e_{n+1} $ polynomial $ G(Y) $ is equivalent to the condition that $ \pi^{\ast}(G(Y)) \in H^{0}(W, \mathcal{O}_{W}(e_{n+1}H-\sum_{j=1}^{n} e_{i}E_{i})) $.  Thus the Cox ring of $ W $ is isomorphic to the ring of invariants $ k[X]^{\mathbb{G}_{a}^{n-s}} $.  Mukai then proves that the Cox ring of $ W $ is not finitely generated using the geometry of $ W $.  We will use a different strategy than the one he uses to prove that the corresponding Cox ring in our proof is not finitely generated.

In our paper we first show in Lemma ~\ref{L:multiGrading} (see page \pageref{L:multiGrading}) that there is always at least one multi-grading for the ring of invariants.  We then create a six dimensional representation $ \beta: \mathbb{G}_{a} \to \operatorname{GL}(\mathbf{V}) $ such that if $ \{x_{1},\dots,x_{6}\} $ is a basis of $ \mathbf{V}^{\ast} $, then there are four bi-homogeneous polynomials $ x_{3},r_{1}(X),r_{2}(X),r_{3}(X) $ such that if $ A $ is equal to $ k[x_{3},r_{1}(X),r_{2}(X),r_{3}(X)] $ (the ring $ A $ does not contain $ x_{1} $ and $ x_{1} $ does not divide $ r_{1}(X),r_{2}(X) $ or $ r_{3}(X) $), then $ \left(A[x_{1}]\right)_{x_{1}} \cong k[X]_{x_{1}}^{\mathbb{G}_{a}} $ and $ A $ is a four dimensional, bi-graded, polynomial ring.

We then construct a toric variety $ X_{\Sigma} $ such that $ \operatorname{Cox}(X_{\Sigma}) \cong A $ with the exact same bi-homogeneous grading.  First we adjust the $ 2 \times 4 $ matrix of weights of $ x_{3},r_{1}(X),r_{2}(X),r_{3}(X) $ to obtain a matrix $ C $.  Then we compute the kernel $ B $ of the exact sequence below:
\begin{equation*}
\xymatrix{
    0 \ar[r] & \mathbb{Z}^{2} \ar[r]^{B} & \mathbb{Z}^{4} \ar[r]^{C} & \mathbb{Z}^{2} \ar[r] & 0.
    }.
\end{equation*}
The columns of $ B $ define rays of a fan $ \Sigma $ in $ \mathbb{R}^{2} $.  From this fan we recover a toric variety $ X_{\Sigma} $ and prove that $ X_{\Sigma} \cong \left(\mathbb{A}^{4}_{k} \setminus Z(\Sigma)\right)//\mathbb{G}_{m}^{2} $ for a certain variety $ Z(\Sigma) $ (see \cite[Chapter 5, Homogeneous Coordinates on Toric Varieties, Section 1, Quotient Constructions of Toric Varieties, Lemma 5.1.1]{CoxLittleSchenck} regarding this construction).  This construction proves that, after a suitable adjustment of the weights, the Cox ring of $ X_{\Sigma} $ is isomorphic to $ A $ as a graded ring.

If $ y_{1},\dots,y_{4} $ are the elements below:
\begin{align*}
    y_{1} &= x_{3} \\
    y_{i} &= r_{i-1}(X) \quad 2 \le i \le 4.
\end{align*}
then $ \mathfrak{p} = \langle x_{1} \rangle \cap A $ is a multi-homogeneous ideal such that $ \mathcal{V}(\mathfrak{p}) $ is the identity $ e $ of the torus with its identification via the fan $ \Sigma $.  Let $ B_{d,I} $ be the vector sub-space of polynomials $ G_{I}(Y) $ of bi-degree $ I $, such that $ x_{1}^{d} $ divides $ G_{I}(Y) $.  We show in Theorem ~\ref{T:CoxRingStart} that $ k[X]^{\mathbb{G}_{a}} $ is isomorphic to $ \oplus_{d,I} B_{d,I} $.

If $ \mathfrak{p} $ is the ideal $ \langle x_{1} \rangle \cap k[Y] $, then the condition that $ x_{1}^{d} $ divides $ G_{I}(Y) $ is equivalent to the condition that $ G_{I}(Y) \in \mathfrak{p}^{(d)} $.  The ideal $ \mathfrak{p} $ is also homogeneous with respect to the bi-grading and $ \mathcal{V}(\mathfrak{p}) $ corresponds to the point $ e $ of $ X_{\Sigma} $.  If $ W $ is the blow-up of $ X_{\Sigma} $ at $ e $ and $ \pi: W \to X_{\sigma} $ is the natural projection, then the condition that $ G_{I}(Y) \in \mathfrak{p}^{(d)} $ is equivalent to the condition that $ \pi^{\ast}(G_{I}(Y)) \in H^{0}(W, \mathcal{O}_{W}(D_{I}-dE)) $.  Here $ D_{I} $ is a divisor in $ X_{\Sigma} $ of bi-degree $ I $ such that $ G_{I}(Y) \in H^{0}(X_{\Sigma}, \mathcal{O}_{X_{\Sigma}}(D)) $.  So, the Cox ring of $ W $ is isomorphic to the ring of invariants $ k[X]^{\mathbb{G}_{a}} $.  We then prove that the Cox ring of $ W $ is not finitely generated by proving that the pseudo-effective cone contains an extremal, irrational ray.

To construct the localization of our ring of invariants we use theory from Section ~\ref{S:Reps}.  We then relate this to the Cox ring of a specific toric variety using results from Section ~\ref{S:TA}.  We then describe the properties of this toric variety in Section ~\ref{S:specificTor} and prove the main result in Theorem ~\ref{S:Main}.  Pre-requisite information on Cox rings, positivity properties of divisors and toric varieties are included in appendices for the purpose of this paper being self contained.

\section{Beginning Information on $ \mathbb{G}_{a} $-Representations and $ \mathbb{G}_{a} $-Actions.} \label{S:Reps}
\begin{definition}
    An \emph{additive polynomial} $ b(t) $ is a polynomial of $ k[t] $ such that $ b(t_{1}+t_{2})=b(t_{1})+b(t_{2}) $.
\end{definition}
If the characteristic of $ k $ is zero, then the only additive polynomials are constant multiples of $ t $.  It is important to note that an additive polynomial $ c(t) $ is separable if and only if it contains a non-zero linear term.
\begin{definition} \label{D:ore}
    Let $ k $ be a field of positive characteristic $ p > 0 $.  The \emph{Ore ring} is the ring whose underlying set is the set of additive polynomials of $ k[t] $.  Addition is pointwise addition in the Ore ring, while multiplication is composition.
\end{definition}
There is another way to view the Ore ring.  If $ F^{\ell} $ corresponds to the $ \ell $-th iterate of the Frobenius morphism, then $ t^{p^{\ell}} = F^{\ell}(t) $.  The underlying set of the Ore ring is $ k[F] $, and addition is the same as in the polynomial ring $ k[F] $.  However, $ aF^{j} \cdot (b F^{i}) = a b^{p^{j}} F^{i+j} $.  If $ b(F) \in k[F] $, then the map which sends $ b(F) $ to $ b(F) \circ t $ is an isomorphism between our two conceptualizations of the Ore ring.  We shall denote the Ore ring by $ \mathfrak{O} $.

By an analogous proof to the case of $ k[t] $, there is a right division algorithm for the Ore ring.  If $ k $ is a perfect field, then there is a left division algorithm (see \cite[Proposition 1.6.2, Proposition 1.6.5]{Goss} and \cite[Chapter I, Theorem 1, pg. 562]{Ore}).  As a result, the Ore ring is a non-commutative Euclidean ideal domain.

If $ k $ is algebraically closed, then $ k $ is perfect.  So for $ k $ algebraically closed, the Ore ring is a non-commutative Euclidean Ring, and so for any two additive polynomials $ c_{1}(t) $ and $ c_{2}(t) $ a greatest common divisor exists.  We will denote the greatest common divisor of $ c_{1}(t) $ and $ c_{2}(t) $ in $ \mathfrak{O} $ by $ \mathfrak{O}(c_{1}(t),c_{2}(t)) $.  For two additive polynomials $ c_{1}(t) $ and $ c_{2}(t) $, if $ b(t) $ is equal to $ \mathfrak{O}(c_{1}(t),c_{2}(t)) $, then there are additive polynomials $ d_{1}(t) $ and $ d_{2}(t) $ such that:
\begin{align*}
    d_{1}(b(t)) &= c_{1}(t) \\
    d_{2}(b(t)) &= c_{2}(t).
\end{align*}
\begin{eg} \label{EG:boft}
    Let $ k $ be an algebraically closed field of characteristic $ p>0 $ and let $ b(t) $ be an additive polynomial.  The sub-scheme $ \mathcal{V}(\langle b(t) \rangle) $ inherits the structure of an affine group scheme from the Hopf algebra morphisms of $ \mathbb{G}_{a} $.  We denote this group scheme by $ \mathbf{ker}(b(t)) $.  Note that this group scheme may not be reduced.
\end{eg}
\begin{definition} \label{D:aPair}
    Let $ b(t) $ be an additive polynomial of $ k[t] $, and let $ \operatorname{Spec}(A) $ be a $ \mathbb{G}_{a} $-variety with action $ \beta $.  If there are elements $ g \in A $ and $ h \in A^{\mathbb{G}_{a}} $, such that $ \beta^{\sharp}(g) = g+b(t) h $, then $ (g, h) $ is \emph{a $ b(t) $-pair}.  Another way to say this is that if $ x \in \operatorname{Spec}(A) $ and $ t_{0} \in \mathbb{G}_{a} $, then
    \begin{equation*}
        g(t_{0} \ast x) = g(x)+b(t_{0})h(x).
    \end{equation*}
    If $ b(t) $ is equal to $ t $, then we say that $ (g,h) $ \emph{is a principle pair}.  A $ b(t) $-pair $ (g,h) $ is \emph{trivial} if $ hb(t) $ is equal to zero.
\end{definition}
\begin{definition} \label{D:principleAPair}
    If $ \operatorname{Spec}(A) $ is a $ \mathbb{G}_{a} $-variety over a field $ k $ with action $ \beta $, then a non-trivial $ b(t) $-pair $ (g,h) $ is \emph{a quasi-principle $ b(t) $-pair} if $ \mathbf{ker}(b(t)) $ acts trivially on all of $ \operatorname{Spec}(A) $.
\end{definition}
\begin{definition} \label{D:plinth1}
    If $ \operatorname{Spec}(A) $ is a $ \mathbb{G}_{a} $-variety, then the \emph{pedestal ideal} is the ideal of $ A $ generated by $ \{ 0 $ and all $ h $ such that there exists a non-zero, additive polynomial $ b(t) $ and a $ g\in A $ such that $ (g,h) $ is a quasi-principle $ b(t) $-pair $ \} $ (see Definition ~\ref{D:aPair} on page \pageref{D:aPair}).  Denote this ideal by $ \mathfrak{P}(A) $ or just $ \mathfrak{P} $ when the underlying ring is clear.  The \emph{pedestal scheme} is $ \mathcal{V}(\mathfrak{P}(A)) \subseteq \operatorname{Spec}(A) $.  A point $ x \in \operatorname{Spec}(A) $ is \emph{affine stable} if $ x \in \operatorname{Spec}(A) \setminus \mathcal{V}(\mathfrak{P}(A)) $.  We denote the sub-variety of affine stable points of $ \operatorname{Spec}(A) $ by $ \operatorname{Spec}(A)^{as} $.  The variety $ \operatorname{Spec}(A) $ is quasi-principle if there is a quasi-principle pair $ (g,h) $.

    If $ \operatorname{Spec}(A) $ is a $ \mathbb{G}_{a} $-variety, then the \emph{large pedestal ideal} is the ideal of $ A $ generated by $ \{ h $ such that there exists a non-zero, additive polynomial $ c(t) $ and a $ g \in A $ such that $ (g,h) $ is a $ c(t) $-pair $ \} $.  We denote the large pedestal ideal of $ A $ by $ \mathfrak{P}_{g}(A) $.
\end{definition}
If $ \operatorname{Spec}(A) $ is quasi-principle variety with action $ \beta $, and $ (g,h) $ is a quasi-principle $ b(t) $-pair, then $ \mathbf{ker}(b(t)) $ acts trivially on $ \operatorname{Spec}(A) $.  Since
\begin{align*}
    A^{\mathbb{G}_{a}} &= (A^{\mathbf{ker}(b(t))})^{(\mathbb{G}_{a}//\mathbf{ker}(b(t)))} \\
    &= A^{\mathbb{G}_{a}//\mathbf{ker}(b(t))},
\end{align*}
it often makes sense to replace $ \mathbb{G}_{a} $ by $ \mathbb{G}_{a}//\mathbf{ker}(b(t)) $ and assume that $ (g,h) $ is a principle pair.

The pedestal ideal is related to the concept of the plinth ideal.  The plinth ideal has been studied in characteristic zero in the situation where a $ \mathbb{G}_{a} $-action is determined by a locally nilpotent derivation.  If the ring $ A^{\mathbb{G}_{a}} $ is a finitely generated ring over a field of characteristic zero, then the plinth ideal is equal to $ \mathfrak{P}(A) \cap A^{\mathbb{G}_{a}} $.
\begin{prop} \label{P:expandedSliceSep}
    If $ \operatorname{Spec}(A) $ is a $ \mathbb{G}_{a} $-variety with action $ \beta $, such that $ A $ is generated by
    $ z_{1},\dots,z_{n} $ as a $ k $-algebra, $ \beta^{\sharp}(z_{i}) $ is equal to $ v_{i}(Z,t) $, and $ (g,h) $ is a principle pair (see Definition ~\ref{D:aPair}), then the rational functions:
    \begin{equation} \label{E:82}
        f_{i}(Z) = v_{i}(z_{1},\dots,z_{n},-g/h) \quad 1 \le i \le n
    \end{equation}
    are invariant.  Moreover, the map below:
    \begin{equation*}
        r(Z) \mapsto r(f_{1}(Z),\dots,f_{n}(Z)),
    \end{equation*}
    is a homomorphism from $ A_{h} $ to $ A_{h}^{\mathbb{G}_{a}} $.
\end{prop}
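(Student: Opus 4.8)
The idea is that $ f_{i}(x) $ records the value of $ z_{i} $ at the unique point of the $ \mathbb{G}_{a} $-orbit of $ x $ at which $ g $ vanishes. Since $ (g,h) $ is a principle pair, $ g(t_{0} \ast x) = g(x)+t_{0}h(x) $, so on the locus $ h \ne 0 $ the choice $ t_{0} = -g(x)/h(x) $ flows $ x $ into $ \{g=0\} $, and $ f_{i} = v_{i}(Z,-g/h) $ is the $ z_{i} $-coordinate of that canonical representative; invariance should then be automatic, since a canonical orbit representative does not move along the orbit. To make this rigorous the plan is to work entirely with the coaction $ \beta^{\sharp}: A \to A[t] $, $ z_{i} \mapsto v_{i}(Z,t) $. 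The engine of the whole argument is the cocycle identity coming from coassociativity of $ \beta^{\sharp} $ (writing the two group coordinates as $ t_{1},t_{2} $, and using that $ \mathbb{G}_{a} $ is additive):
\begin{equation*}
    v_{i}(Z,t_{1}+t_{2}) = v_{i}\bigl(v_{1}(Z,t_{1}),\dots,v_{n}(Z,t_{1}),t_{2}\bigr),
\end{equation*}
which holds as a polynomial identity and therefore remains valid after substituting elements of any $ A $-algebra for the variables.

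First I would prove the $ f_{i} $ are invariant. Extend $ \beta^{\sharp} $ to $ \beta_{h}^{\sharp}: A_{h} \to A_{h}[s] $, which is legitimate because $ h \in A^{\mathbb{G}_{a}} $ gives $ \beta^{\sharp}(h)=h $, a unit in $ A_{h}[s] $. Applying $ \beta_{h}^{\sharp} $ to $ f_{i}=v_{i}(Z,-g/h) $ and using $ \beta_{h}^{\sharp}(z_{j})=v_{j}(Z,s) $, $ \beta_{h}^{\sharp}(g)=g+sh $, and $ \beta_{h}^{\sharp}(h)=h $, I obtain
\begin{equation*}
    \beta_{h}^{\sharp}(f_{i}) = v_{i}\Bigl(v_{1}(Z,s),\dots,v_{n}(Z,s),\,-\tfrac{g}{h}-s\Bigr).
\end{equation*}
Substituting $ t_{1}=s $ and $ t_{2}=-g/h-s $ into the cocycle identity collapses the right-hand side to $ v_{i}(Z,-g/h)=f_{i} $, so $ \beta_{h}^{\sharp}(f_{i})=f_{i} $ is constant in the group variable; that is, $ f_{i} \in A_{h}^{\mathbb{G}_{a}} $.

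For the homomorphism statement I would avoid checking that the $ f_{i} $ satisfy the defining relations of $ A $ by realizing the substitution map as a composite of two ring homomorphisms. Let $ \mathrm{ev}: A[t] \to A_{h} $ be the $ A $-algebra map sending $ t \mapsto -g/h $, and set $ \Phi = \mathrm{ev} \circ \beta^{\sharp}: A \to A_{h} $. Then $ \Phi(z_{i}) = v_{i}(Z,-g/h)=f_{i} $, and for any $ r(Z) \in A $ one has $ \Phi(r(Z)) = r(f_{1},\dots,f_{n}) $; being a composite of ring maps, $ \Phi $ is automatically a homomorphism, so well-definedness is free. Since $ \beta^{\sharp}(h)=h $ forces $ \Phi(h)=h $, a unit in $ A_{h} $, the map $ \Phi $ extends uniquely to $ \overline{\Phi}: A_{h} \to A_{h} $, which is precisely $ r(Z) \mapsto r(f_{1},\dots,f_{n}) $ on all of $ A_{h} $. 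Finally, $ A_{h} $ is generated as a $ k $-algebra by $ z_{1},\dots,z_{n} $ together with $ h^{-1} $, whose images $ f_{1},\dots,f_{n},h^{-1} $ all lie in $ A_{h}^{\mathbb{G}_{a}} $ by the previous paragraph (using that $ h $, hence $ h^{-1} $, is invariant); therefore $ \overline{\Phi} $ lands in $ A_{h}^{\mathbb{G}_{a}} $, as claimed.

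The only real obstacle is bookkeeping: deriving the cocycle identity correctly and tracking which group variable is which when I feed $ t_{2}=-g/h-s $ back into it. Everything else — well-definedness, the passage to the localization, and membership in the invariant ring — follows formally once the cocycle identity and the principle-pair relation $ \beta^{\sharp}(g)=g+th $ are in hand.
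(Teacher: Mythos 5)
Your proposal is correct and is essentially the paper's own argument: the paper proves invariance via the same cocycle identity, just expressed pointwise as $v_{i}(x,s+w)=v_{i}(z_{1}(w\ast x),\dots,z_{n}(w\ast x),s)$ followed by the substitution $s=(-g/h)(w\ast x)=(-g/h)(x)-w$, and it likewise obtains the homomorphism statement by factoring the substitution map as the coaction composed with evaluation at $t=-g/h$. Your coaction-level formulation is a routine dualization of that argument (and slightly more careful about the localization), not a different route.
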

\begin{proof}
    If $ x \in \operatorname{Spec}(A) $ and $ s,w \in \mathbb{G}_{a} $, then since $ \operatorname{Spec}(A) $ is a $ \mathbb{G}_{a} $-variety:
    \begin{align}
        v_{i}(x,s+w) &= z_{i}((s+w) \ast x), \notag \\
        &= z_{i}(s \ast (w \ast x)), \notag \\
        &= v_{i}(z_{1}(w \ast x),\dots,z_{n}(w \ast x),s). \label{E:87}
    \end{align}
    Equating the left and right hand sides of ~\eqref{E:87}:
    \begin{equation} \label{E:88}
        v_{i}(x,s+w) = v_{i}(z_{1}(w \ast x),\dots,z_{n}(w \ast x),s).
    \end{equation}
    If we substitute $ (-g/h)(w \ast x) $ for $ s $, then:
    \begin{align*}
        f_{i}(w \ast x) &= v_{i}(z_{1}(w \ast x),\dots,z_{n}(w \ast x),(-g/h)(w \ast x)) \\
        &= v_{i}(z_{1}(w \ast x),\dots,z_{n}(w \ast x),(-g/h)(x)-w) \\
        &= v_{i}(x,(-g/h)(x)-w+w) \\
        &= v_{i}(x,(-g/h)(x)) \\
        &= f_{i}(x),
        \end{align*}
    where the jump from the second line to the third uses ~\eqref{E:88}.  Because $ f_{i}(Z) $ is constant on the orbits of $ \mathbb{G}_{a} $, it is in $ A_{h}^{\mathbb{G}_{a}} $.  Now notice that the map which sends $ r(Z) $ to $ r(f_{1}(Z),\dots,f_{n}(Z)) $ is the composition of the co-action $ \beta^{\sharp} $ with the evaluation morphism which sends $ t $ to $ -g/h $.  Therefore, this is a homomorphism.
\end{proof}
\begin{prop} \label{P:itsTheRing}
    Let $ \operatorname{Spec}(A) $ be a $ \mathbb{G}_{a} $-variety with action $ \beta $ and let \linebreak $ z_{1},\dots,z_{n} $ generate $ A $ as a $ k $-algebra.  If $ (g, h) $ is a principle pair (see Definition ~\ref{D:aPair} on page \pageref{D:aPair}), $ v_{i}(Z,t) $ is equal to $ \beta^{\sharp}(z_{i}) $, and $ f_{i}(Z) $ equals $ v_{i}(Z,-g/h) $, then
    \begin{equation*}
        k[f_{1}(Z),\dots,f_{n}(Z)]_{h} = A_{h}^{\mathbb{G}_{a}},
    \end{equation*}
    i.e., the homomorphism which sends $ r(Z) \in A_{h} $ to $ r(f_{1}(Z),\dots,f_{n}(Z)) $ is surjective and if $ r(Z) $ is an element of $ A^{\mathbb{G}_{a}}_{h} $, then it is equal to $ r(f_{1}(Z),\dots,f_{n}(Z)) $.
\end{prop}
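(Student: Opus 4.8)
The plan is to recognize the homomorphism $\phi \colon A_{h} \to A_{h}^{\mathbb{G}_{a}}$ of Proposition~\ref{P:expandedSliceSep}, sending $r(Z)$ to $r(f_{1}(Z),\dots,f_{n}(Z))$, as a \emph{retraction} of $A_{h}$ onto its invariant subring, and then to read off both claimed properties from this single structural fact. First I would record the description of $\phi$ already noted at the end of the previous proof: it is the composite of the co-action $\beta^{\sharp} \colon A_{h} \to A_{h}[t]$ with the evaluation map $t \mapsto -g/h$. Since $\beta^{\sharp}$ is a ring homomorphism with $\beta^{\sharp}(z_{i}) = v_{i}(Z,t)$, and the evaluation carries $v_{i}(Z,t)$ to $f_{i}(Z)$, one indeed has $\phi(r(Z)) = r(f_{1}(Z),\dots,f_{n}(Z))$, and by Proposition~\ref{P:expandedSliceSep} its image lies in $A_{h}^{\mathbb{G}_{a}}$.

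The crux is the claim that $\phi$ fixes $A_{h}^{\mathbb{G}_{a}}$ pointwise. For this I would use that an element $a \in A_{h}$ is invariant precisely when $\beta^{\sharp}(a) = a$ as an element of $A_{h}[t]$; that is, the co-action produces no terms of positive degree in $t$. Evaluating such a $t$-constant at $t = -g/h$ returns $a$ unchanged, so $\phi(a) = a$ for every $a \in A_{h}^{\mathbb{G}_{a}}$. In particular, this already establishes the second assertion: if $r(Z)$ is invariant, then $r(Z) = \phi(r(Z)) = r(f_{1}(Z),\dots,f_{n}(Z))$.

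From the retraction property the remaining assertions follow quickly. Because $\operatorname{im}(\phi) \subseteq A_{h}^{\mathbb{G}_{a}}$ and $\phi$ is the identity on $A_{h}^{\mathbb{G}_{a}}$, any invariant $a$ equals $\phi(a)$ and hence lies in the image; this is the surjectivity of $\phi$ onto $A_{h}^{\mathbb{G}_{a}}$. To identify the image with $k[f_{1}(Z),\dots,f_{n}(Z)]_{h}$, I would observe that $\phi$ sends each $z_{i}$ to $f_{i}(Z)$ and sends $h$, which is invariant, to itself, so a general element $p(Z)/h^{m}$ of $A_{h}$ is carried to $p(f_{1}(Z),\dots,f_{n}(Z))/h^{m}$. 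Thus $\operatorname{im}(\phi) = k[f_{1}(Z),\dots,f_{n}(Z)]_{h}$, and combined with surjectivity this gives $k[f_{1}(Z),\dots,f_{n}(Z)]_{h} = A_{h}^{\mathbb{G}_{a}}$. The reverse inclusion is in any case immediate, since each $f_{i}(Z)$ is invariant by Proposition~\ref{P:expandedSliceSep}, $h$ is invariant, and the invariants form a subring closed under inverting $h$.

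The step I expect to demand the most care is the localization bookkeeping: confirming that $\beta^{\sharp}$ extends to $A_{h}$ using the invariance of $h$, that evaluation at $-g/h \in A_{h}$ is well defined on all of $A_{h}[t]$, and that $\phi(h) = h$ so that denominators are preserved and $h$ remains a unit in the image. Once these compatibilities are verified, the conceptual content, namely that a slice-type substitution map is a retraction onto the ring of invariants, is short and yields both the surjectivity and the fixed-point statement at once.
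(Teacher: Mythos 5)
Your proposal is correct and is essentially the paper's own argument: the paper likewise writes an invariant $r(Z)$ as $\beta^{\sharp}(r(Z)) = r(v_{1}(Z,t),\dots,v_{n}(Z,t))$ and substitutes $t=-g/h$ to conclude $r(Z)=r(f_{1}(Z),\dots,f_{n}(Z))$, which is exactly your fixed-point/retraction step. Your packaging is slightly more complete in that you also spell out the reverse inclusion and the identification of the image with $k[f_{1}(Z),\dots,f_{n}(Z)]_{h}$, points the paper leaves implicit.
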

\begin{proof}
    If $ (g,h) $ is a principle pair, $ r(Z) \in (A_{h})^{\mathbb{G}_{a}} $, and we denote $ \beta^{\sharp}(z_{i}) $ by $ v_{i}(Z,t) $, then:
    \begin{align}
        r(Z) &= \beta^{\sharp}(r(Z)), \notag \\
        &= r(\beta^{\sharp}(z_{1}),\dots,\beta^{\sharp}(z_{n})), \notag \\
        &=r\left( v_{1}(Z,t),\dots,v_{n}(Z,t) \right). \label{E:36}
    \end{align}
    Since $ r(Z) $ does not depend on the value of $ t $ in ~\eqref{E:36}, we may substitute $ (-g/h) $ for $ t $ in ~\eqref{E:36}.  So, we may write $ r(Z) $ as follows:
    \begin{align*}
        r(Z) &= r(v_{1}(Z,-g/h),\dots,v_{n}(Z,-g/h)) \\
        &=r(f_{1}(Z),\dots,f_{n}(Z)).
    \end{align*}
    Therefore $ r(Z) \in k[f_{1}(Z),\dots,f_{n}(Z)]_{h} $, i.e.,
    \begin{equation*}
        k[f_{1}(Z),\dots,f_{n}(Z)]_{h} = A^{\mathbb{G}_{a}}_{h}.
    \end{equation*}
\end{proof}
\begin{prop} \label{T:qJustification}
    Let $ \beta: \mathbb{G}_{a} \to \operatorname{GL}(\mathbf{V}) $ be an $ n $-dimensional, non-trivial, quasi-principle, linear representation of $ \mathbb{G}_{a} $, let $ \{x_{1},\dots,x_{n}\} $ be a basis of $ \mathbf{V}^{\ast} $, and let $ (g(X),h(X)) $ be a non-trivial principle pair such that $ g(X),h(X) \in \mathbf{V}^{\ast} $.  After a change of basis, we may assume that $ h(X) $ is equal to $ x_{1} $ and $ k[X]^{\mathbb{G}_{a}}_{x_{1}} $ is isomorphic to the localization of a polynomial ring in $ n-1 $ variables with respect to one of the indeterminates.
\end{prop}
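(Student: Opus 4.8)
The plan is to exhibit the localized invariant ring as the coordinate ring of a slice transverse to the generic orbits, and then recognize that coordinate ring as a localized polynomial ring. First I would record two elementary linear-algebra facts about the pair $(g(X),h(X))$. Since the pair is non-trivial, $h(X)\,t \ne 0$, so $h(X)\ne 0$; and if $g(X)$ were a scalar multiple $\lambda h(X)$, then $\beta^{\sharp}(g)=\lambda h = g$ would force $t\,h(X)=0$, again contradicting non-triviality. Hence $g(X)$ and $h(X)$ are linearly independent elements of $\mathbf{V}^{\ast}$, and I may choose a basis with $x_{1}=h(X)$ and $x_{2}=g(X)$; this realizes the normalization $h(X)=x_{1}$ demanded by the statement (the extra choice $x_{2}=g(X)$ is harmless and only streamlines the computation).

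With this basis fixed, write $v_{i}(X,t)=\beta^{\sharp}(x_{i})$ and set $f_{i}(X)=v_{i}(X,-g/h)$ as in Proposition \ref{P:itsTheRing}. That proposition already gives $k[X]^{\mathbb{G}_{a}}_{x_{1}}=k[f_{1}(X),\dots,f_{n}(X)]_{x_{1}}$, where localizing $k[X]^{\mathbb{G}_{a}}$ at the invariant $x_{1}=h$ agrees with taking invariants in $k[X]_{x_{1}}$. The next step is to restrict the generators $f_{i}$ to the slice $S=\mathcal{V}(g)\cap\{h\ne 0\}=\{x_{2}=0,\ x_{1}\ne 0\}$, whose coordinate ring is $\mathcal{O}(S)=k[x_{1},x_{3},\dots,x_{n}]_{x_{1}}$ --- precisely a polynomial ring in the $n-1$ variables $x_{1},x_{3},\dots,x_{n}$ localized at the single indeterminate $x_{1}$. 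Because the identity of $\mathbb{G}_{a}$ acts trivially we have $v_{i}(X,0)=x_{i}$, and on $S$ the function $-g/h$ vanishes; hence restriction $\rho\colon k[X]_{x_{1}}\to\mathcal{O}(S)$ sends $f_{i}\mapsto x_{i}|_{S}$, so that $\rho(f_{1})=x_{1}$, $\rho(f_{2})=0$, and $\rho(f_{i})=x_{i}$ for $i\ge 3$.

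It then remains to show that $\rho$ restricts to an isomorphism from $k[X]^{\mathbb{G}_{a}}_{x_{1}}$ onto $\mathcal{O}(S)$. Surjectivity is immediate from the previous paragraph, since the images $x_{1},x_{3},\dots,x_{n}$ together with $x_{1}^{-1}$ generate $\mathcal{O}(S)$. For injectivity I would invoke the slice identity established inside the proof of Proposition \ref{P:expandedSliceSep}: for any $x$ with $h(x)\ne 0$ the point $\sigma(x)=(-g/h)(x)\ast x$ satisfies $(-g/h)(\sigma(x))=0$ and $h(\sigma(x))=h(x)\ne 0$, so $\sigma(x)\in S$; thus every $\mathbb{G}_{a}$-orbit meeting $\{h\ne 0\}$ meets $S$. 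An invariant function vanishing on $S$ is constant along orbits and therefore vanishes on all of $\{h\ne 0\}$, giving $\ker\rho=0$ on $k[X]^{\mathbb{G}_{a}}_{x_{1}}$. Combining, $\rho$ identifies $k[X]^{\mathbb{G}_{a}}_{x_{1}}$ with $\mathcal{O}(S)=k[x_{1},x_{3},\dots,x_{n}]_{x_{1}}$, the asserted localization of an $(n-1)$-variable polynomial ring at one of its indeterminates.

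The step I expect to be the main obstacle is the injectivity of $\rho$: one must genuinely know that $S$ meets every orbit in $\{h\ne 0\}$ and that restriction to $S$ separates invariants. The computation $(-g/h)(w\ast x)=(-g/h)(x)-w$ from the proof of Proposition \ref{P:expandedSliceSep} is exactly the input that makes $S$ a global section of the orbit map over $\{h\ne 0\}$, so it is this slice structure, rather than any delicate commutative algebra, that carries the argument.
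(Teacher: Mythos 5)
Your argument is correct, but it runs on a different engine than the paper's proof. Both proofs fix the basis so that $(x_{2},x_{1})$ is the principle pair, form $f_{i}(X)=v_{i}(X,-x_{2}/x_{1})$, and exploit the evaluation $f_{i}(X)\mid_{x_{2}=0}=x_{i}$. The paper, however, never constructs a restriction homomorphism: it clears denominators to write $f_{i}(X)=r_{i}(X)/x_{1}^{d_{i}}$ with $r_{i}(X)$ invariant, and uses the specialization $r_{i}(X)\mid_{x_{2}=0}=x_{i}x_{1}^{d_{i}}$ to show that a polynomial relation among $x_{1},r_{3}(X),\dots,r_{n}(X)$ would force a relation among $x_{1},x_{3},\dots,x_{n}$; algebraic independence plus Proposition~\ref{P:itsTheRing} then exhibits $k[X]^{\mathbb{G}_{a}}_{x_{1}}=k[x_{1},r_{3}(X),\dots,r_{n}(X)]_{x_{1}}$ directly as a localized polynomial ring, with no injectivity statement needed. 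You instead identify $k[X]^{\mathbb{G}_{a}}_{x_{1}}$ with $\mathcal{O}(S)$ for the slice $S=\{x_{2}=0,\ x_{1}\neq 0\}$: your surjectivity half is the same specialization computation, but your injectivity half --- every orbit in $D(x_{1})$ meets $S$ via $x\mapsto(-g/h)(x)\ast x$, so an invariant vanishing on $S$ vanishes on $D(x_{1})$ --- is a genuinely different ingredient, geometric rather than algebraic. What it buys is the explicit picture of $S$ as a section of the orbit map over $D(x_{1})$; what it costs is a pointwise argument that implicitly passes to $\overline{k}$-points and uses that $k[X]_{x_{1}}$ is a domain, whereas the paper's independence argument stays entirely inside the polynomial ring. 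Your preliminary observation that non-triviality forces $g(X)$ and $h(X)$ to be linearly independent, so the normalization $h(X)=x_{1}$, $g(X)=x_{2}$ is legitimate, is a detail the paper elides and is worth keeping.
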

\begin{proof}
    If $ g(X) $ is linear, then after a change of basis, $ (x_{2},x_{1}) $ is the principle pair $ (g(X),h(X)) $.  Let $ v_{i}(X,t) $ be the polynomial $ \beta^{\sharp}(x_{i}) $ and $ f_{i}(X) $ the rational function below:
    \begin{align*}
        f_{i}(X) &= v_{i}(X,-x_{2}/x_{1}) \\
        &= \beta^{\sharp}(x_{i}) \mid_{t=-x_{2}/x_{1}}.
    \end{align*}
    Because $ (x_{2},x_{1}) $ is a principle pair,
    \begin{align*}
        v_{2}(X,t) &= \beta^{\sharp}(x_{2}) \\
        &= x_{2}+tx_{1}.
    \end{align*}
    Since $ v_{2}(X,t) $ is equal to $ x_{2}+tx_{1} $,
    \begin{align*}
        f_{2}(X) &= v_{2}(X,-x_{2}/x_{1}) \\
        &= x_{2}+\left(\frac{-x_{2}}{x_{1}}\right) x_{1} \\
        &= 0.
    \end{align*}
    The ring $ k[X] $ is a UFD and the only units in $ k[X]_{x_{1}} $ are constants and powers of $ x_{1} $.  So $ f_{i}(X) $ may be uniquely represented as $ r_{i}(X)/x_{1}^{d_{i}} $, for a polynomial $ r_{i}(X) $ which is indivisible by $ x_{1} $ and a non-negative integer $ d_{i} $.  Since $ f_{i}(X) \in k[X]_{x_{1}}^{\mathbb{G}_{a}} $ and $ \mathbb{G}_{a} $ has no characters, $ r_{i}(X) $ is invariant.

    If we note that
    \begin{align*}
        f_{i}(X) \mid_{x_{2}=0} &= v_{i}(X,-x_{2}/x_{1}) \mid_{x_{2}=0} \\
        &= v_{i}(X,0) \\
        &=\beta^{\sharp}(x_{i}) \mid_{t=0} \\
        &= x_{i},
    \end{align*}
    then since $ r_{i}(X) $ is equal to $ f_{i}(X) x_{1}^{d_{i}} $,
    \begin{align*}
        r_{i}(X) \mid_{x_{2}=0} &= \left(f_{i}(X)x_{1}^{d_{i}}\right)\mid_{x_{2}=0} \\
        &= (f_{i}(X) \mid_{x_{2}=0}) x_{1}^{d_{i}} \\
        &= x_{i}x_{1}^{d_{i}}.
    \end{align*}

    We claim that $ x_{1},r_{3}(X),\dots,r_{n}(X) $ are algebraically independent.

    Suppose there is a polynomial $ g(y_{1},\dots,y_{n-1}) $ such that
    \begin{equation*}
        g(x_{1},r_{3}(X),\dots,r_{n}(X)) =0.
    \end{equation*}
    The following computations would imply that $ \{x_{1},x_{3},x_{4},\dots,x_{n}\} $ are not algebraically independent,
    \begin{align*}
        0 &= g(x_{1},r_{3}(X),\dots,r_{n}(X)) \\
        &= g(x_{1},r_{3}(X),\dots,r_{n}(X)) \mid_{x_{2} =0} \\
        &= g(x_{1},x_{3}x_{1}^{d_{3}},\dots,x_{n}x_{1}^{d_{n}}).
    \end{align*}
    However, $ \{x_{1},x_{3},\dots,x_{n}\} $ are algebraically independent.  Therefore the polynomials below:
    \begin{equation*}
        x_{1},r_{3}(X),\dots,r_{n}(X)
    \end{equation*}
    are algebraically independent as well.

    Since $ \{x_{1},r_{3}(X),r_{4}(X),\dots,r_{n}(X)\} $ are algebraically independent, and
    \begin{equation*}
        k[X]^{\mathbb{G}_{a}}_{x_{1}} = k[x_{1},r_{3}(X),\dots,r_{n}(X)]_{x_{1}},
    \end{equation*}
    the ring $ k[X]^{\mathbb{G}_{a}}_{x_{1}} $ is the localization of a polynomial ring in $ n-1 $ variables with respect to one of the indeterminates by Proposition ~\ref{P:itsTheRing} (see page \pageref{P:itsTheRing}).
\end{proof}

\section{A Beginning Look at Invariant Rings of $ \mathbb{G}_{a} $-Representations Which Have Linear Pairs.} \label{S:TA}
\begin{lem} \label{L:multiGrading}
    If $ \beta: \mathbb{G}_{a} \to \operatorname{GL}(\mathbf{V}) $ is an $ n $-dimensional, linear, representation such that $ \mathbf{V} $ decomposes into $ \ell $, indecomposable, $ \mathbb{G}_{a} $ representations, then there is an action $ \gamma $ of $ \mathbb{G}_{m}^{\ell} $ on $ \mathbf{V} $ such that $ \gamma $ and $ \beta $ commute.
\end{lem}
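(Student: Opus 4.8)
The plan is to construct $\gamma$ directly from the given decomposition and then verify commutativity one summand at a time. I would begin by writing $\mathbf{V} = \bigoplus_{j=1}^{\ell} W_{j}$, where each $W_{j}$ is an indecomposable $\mathbb{G}_{a}$-subrepresentation. The one structural input I would extract from the hypothesis is that, since this is a decomposition \emph{as $\mathbb{G}_{a}$-representations}, every summand $W_{j}$ is $\beta$-stable; equivalently, after choosing a basis of $\mathbf{V}$ adapted to the decomposition, each operator $\beta(t)$ is block diagonal, preserving each $W_{j}$.

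I would then define $\gamma: \mathbb{G}_{m}^{\ell} \to \operatorname{GL}(\mathbf{V})$ to be the blockwise scaling action attached to this decomposition: for $(\lambda_{1},\dots,\lambda_{\ell}) \in \mathbb{G}_{m}^{\ell}$, let $\gamma(\lambda_{1},\dots,\lambda_{\ell})$ act on $W_{j}$ as multiplication by $\lambda_{j}$, so that in the adapted basis $\gamma(\lambda)$ is the diagonal matrix whose block on $W_{j}$ is $\lambda_{j} I_{\dim W_{j}}$. The first routine step is to confirm that $\gamma$ is a homomorphism of algebraic groups: it is visibly multiplicative in $\lambda$, sends the identity to the identity, and its matrix entries are linear in the $\lambda_{j}$, hence it is a morphism of varieties.

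The heart of the argument is the identity $\beta(t)\gamma(\lambda) = \gamma(\lambda)\beta(t)$ for all $t$ and $\lambda$. Since both operators respect the decomposition, it suffices to check this on each $W_{j}$ separately, and there $\gamma(\lambda)$ is the scalar $\lambda_{j}$, which commutes with every endomorphism of $W_{j}$, in particular with $\beta(t)|_{W_{j}}$. Writing an arbitrary $v = \sum_{j} v_{j}$ with $v_{j} \in W_{j}$ and using that both maps are block diagonal reduces the whole claim to this one-line observation. I would also record the dual reformulation the rest of the paper will use: $\gamma$ induces a $\mathbb{Z}^{\ell}$-grading on $k[X] = S(\mathbf{V}^{\ast})$ in which the linear coordinates dual to $W_{j}$ are homogeneous of a single weight depending only on $j$, and the commuting of $\gamma$ with $\beta$ is precisely the statement that $\beta^{\sharp}$ preserves this grading, which in turn is the $\beta$-stability of the $W_{j}$.

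I do not expect a genuine obstacle: the entire content is that $\beta$ and $\gamma$ are simultaneously in block form with $\gamma$ scalar on each block, so commutativity is forced. The only point requiring care, and the only place the hypothesis is genuinely used, is the assertion that the given decomposition is one of $\mathbb{G}_{a}$-subrepresentations, which is exactly what makes each $W_{j}$ invariant under $\beta$. It is worth noting that this argument is insensitive to the characteristic of $k$ and to the internal structure of the indecomposable summands $W_{j}$, which in characteristic $p>0$ need not be single Jordan blocks.
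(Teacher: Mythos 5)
Your proposal is correct and is essentially identical to the paper's own proof: both define $\gamma$ as the blockwise scalar action of $\mathbb{G}_{m}^{\ell}$ on the indecomposable summands and observe that scalars commute with the ($\beta$-stable) block-diagonal action of $\mathbb{G}_{a}$. Your write-up simply spells out the homomorphism check and the dual grading statement in more detail than the paper does.
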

\begin{proof}
    Let $ \mathbf{V} \cong \oplus_{i=1}^{\ell} \mathbf{V}_{i} $ where $ \dim_{k}(\mathbf{V}_{i}) =n_{i} $.  If $ s \in \mathbb{G}_{m} $ and $ (a_{1},\dots,a_{n_{i}}) \in \mathbf{V}_{i} $, then let $ \mathbb{G}_{m} $ act on $ \mathbf{V}_{i} $ by sending $ (s,(a_{1},\dots,a_{n_{i}})) $ to $ (sa_{1},\dots,sa_{n_{i}}) $.  This action commutes with the action of $ \mathbb{G}_{a} $ on $ \mathbf{V}_{i} $.  The product of these actions is an action of $ \mathbb{G}_{m}^{\ell} $ on $ \mathbf{V} \cong \oplus_{i=1}^{\ell} \mathbf{V}_{i} $, which commutes with the action of $ \mathbb{G}_{a} $ on $ \mathbf{V} $.
\end{proof}
\begin{rmk}
    Since the action of the torus described above commutes with the action of $ \mathbb{G}_{a} $, every element of $ k[X]^{\mathbb{G}_{a}} $ is the sum of polynomials which are homogeneous with respect to the multi-grading of Lemma ~\ref{L:multiGrading} (see page \pageref{L:multiGrading}).
\end{rmk}
Let $ \beta: \mathbb{G}_{a} \to \operatorname{GL}(\mathbf{V}) $ be an $ n $-dimensional, linear representation, and let $ \{x_{1},\dots,x_{n}\} $ be a basis of $ \mathbf{V}^{\ast} $.  If there is a non-trivial, principle pair composed of two linear forms, then we may apply a change of basis and assume that $ (x_{2},x_{1}) $ is a principle pair.  Assume now that $ (x_{2},x_{1}) $ is a principle pair.  If $ \beta^{\sharp}(x_{i}) $ is equal to $ v_{i}(X,t) $, and $ f_{i}(X) $ is equal to $ v_{i}(X,-x_{2}/x_{1}) $, then by Theorem ~\ref{T:qJustification} (see page \pageref{T:qJustification}),
\begin{equation*}
    k[X]_{x_{1}}^{\mathbb{G}_{a}} = k[x_{1}^{\pm 1},f_{3}(X),\dots,f_{n}(X)].
\end{equation*}
Since $ k[X]_{x_{1}} $ is a UFD, there are polynomials $ r_{3}(X),\dots,r_{n}(X) \in k[X] \setminus \langle x_{1} \rangle k[X] $ and non-negative integers $ c_{3},\dots,c_{n} $ such that $ f_{i}(X) $ is equal to $ r_{i}(X)/x_{1}^{c_{i}} $.  Because $ \mathbb{G}_{a} $ has no characters, $ r_{i}(X) \in k[X]^{\mathbb{G}_{a}} $ for $ 3 \le i \le n $.  Also, if $ A $ is the ring $ k[r_{3}(X),\dots,r_{n}(X)] $, then $ A $ is an $ n-2 $-dimensional, polynomial ring by Theorem ~\ref{T:qJustification} (see page \pageref{T:qJustification}).  Moreover, $ x_{1} $ is algebraically independent over $ \operatorname{Frac}(A) $.
\begin{thm} \label{T:CoxRingStart}
    Let $ \beta: \mathbb{G}_{a} \to \operatorname{GL}(\mathbf{V}) $ be an $ n $-dimensional, linear representation of $ \mathbb{G}_{a} $, and let $ \{x_{1},\dots,x_{n}\} $ be a basis of $ \mathbf{V}^{\ast} $.  If $ (x_{2},x_{1}) $ is a principle pair, then let $ A $ be the $ n-2 $-dimensional, polynomial ring described above.  If $ B $ is the ring $ \left(\oplus_{i=0}^{\infty} (A: x_{1}^{i} )_{k[X]}\right) $, then
    \begin{align*}
        B &= \left(A[1/x_{1}]\right)\cap k[X] \\
        & =\left(\oplus_{i=0}^{\infty} (A: x_{1}^{i} )_{k[X]}\right) \\
        & = \left(\oplus_{i=0}^{\infty} \oplus_{L \in \mathbb{Z}^{\ell}} \left((A: x_{1}^{i})_{k[X]}\right)_{L}\right) \\
        & \cong \left(\oplus_{i=0}^{\infty} \left(\langle x_{1} \rangle  \cap A \right)^{(i)}\right) \\
        & \cong \left(\oplus_{L \in \mathbb{Z}^{\ell}} \oplus_{i=0}^{\infty} \left(\langle x_{1} \rangle \cap A \right)^{(i)}_{L}\right),
    \end{align*}
    where $ ((A:x_{1}^{i})_{k[X]})_{L} $ (respectively $ (\langle x_{1} \rangle \cap A)_{L}^{(i)}) $) denotes the multi-degree $ L $ part of $ (A:x_{1}^{i})_{k[X]} $ (respectively $ (\langle x_{1} \rangle \cap A)^{(i)} $) with respect to the multi-grading from Lemma ~\ref{L:multiGrading} (see page \pageref{L:multiGrading}).  The ring $ B $ is finitely generated if and only if $ k[X]^{\mathbb{G}_{a}} $ is as well.  Moreover
    \begin{align}
        k[X]^{\mathbb{G}_{a}} &= k[X]_{x_{1}}^{\mathbb{G}_{a}} \cap k[X], \label{E:25} \\
        &= B[x_{1}], \notag \\
        &= \left(\oplus_{i=0}^{\infty} (A: \langle x_{1}^{i} \rangle )_{k[X]}\right)[x_{1}], \label{E:26} \\
        &= \left(\oplus_{i=0}^{\infty} \oplus_{L \in \mathbb{Z}^{\ell}} \left((A: \langle x_{1}^{i}  \rangle)_{k[X]}\right)_{L}\right)[x_{1}], \label{E:27} \\
        &\cong \left(\oplus_{i=0}^{\infty} \left(\langle x_{1} \rangle  \cap A \right)^{(i)}\right)[x_{1}], \label{E:32} \\
        &\cong \left(\oplus_{i=0}^{\infty} \oplus_{L \in \mathbb{Z}^{\ell}} \left(\langle x_{1} \rangle \cap A \right)^{(i)}_{L}\right)[x_{1}]. \label{E:43}
    \end{align}
    If $ B $ is the Cox ring of a projective, $ \mathbb{Q} $-factorial, normal variety $ Z $ whose class group is free, then either $ k[X]^{\mathbb{G}_{a}} $ is a pure, degree one, transcendental extension of $ A $, or $ B $ is isomorphic to $ k[X]^{\mathbb{G}_{a}} $.
\end{thm}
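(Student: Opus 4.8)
The plan is to build everything on the identity $k[X]^{\mathbb{G}_a}=B[x_1]$ proved above and to read off the two alternatives from the single prime $\mathfrak p=\langle x_1\rangle\cap A$. First I would record the transcendence count: $\dim A=n-2$ while $\dim k[X]^{\mathbb{G}_a}=n-1$, so $k[X]^{\mathbb{G}_a}$ is always a transcendence-degree-one extension of $A$ and the only issue is whether it is \emph{pure}. Next I would pin down how $x_1$ interacts with the symbolic grading $B\cong\bigoplus_i\mathfrak p^{(i)}$: multiplication by $x_1$ carries $(A:x_1^i)_{k[X]}$ into $(A:x_1^{i-1})_{k[X]}$, so $x_1$ lowers the symbolic index by one, and under the identification $B=\operatorname{Cox}(Z)$ it determines a well-defined nonzero class $\delta\in\operatorname{Cl}(Z)$, even though $x_1\notin B$ (indeed $x_1\in A[1/x_1]$ would force $x_1^{j+1}\in A$, contradicting the transcendence of $x_1$ over $\operatorname{Frac}(A)$).

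The first alternative is the degenerate case $\mathfrak p=0$. Then $\mathfrak p^{(i)}=0$ for every $i\ge 1$, so $B=\mathfrak p^{(0)}=A$ and hence $k[X]^{\mathbb{G}_a}=A[x_1]$; since $x_1$ is transcendental over $\operatorname{Frac}(A)$ this is exactly a pure degree-one transcendental extension of $A$. In the remaining case $\mathfrak p\ne 0$ I would choose a nonzero homogeneous $a\in\mathfrak p$ and write $a=x_1g$ with $g\in(A:x_1)_{k[X]}$, so that $x_1=a/g\in\operatorname{Frac}(B)$. This forces $\operatorname{Frac}(B)=\operatorname{Frac}(k[X]^{\mathbb{G}_a})$, and we are left with a tower $B\subseteq B[x_1]=k[X]^{\mathbb{G}_a}$ of normal domains sharing one fraction field, in which $x_1\notin B$; the task is to upgrade this strict inclusion to an abstract isomorphism $B\cong k[X]^{\mathbb{G}_a}$.

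The isomorphism in the case $\mathfrak p\ne 0$ is where I expect all the difficulty, and it is where the Cox-ring hypotheses are indispensable. Because $\operatorname{Cl}(Z)$ is free, $B=\operatorname{Cox}(Z)$ is a $\operatorname{Cl}(Z)$-graded unique factorization domain with $B^{\times}=k^{\times}$ and $B_0=k$; writing $x_1=a/g$ in lowest homogeneous terms, coprimality yields a prime presentation $B[x_1]\cong B[T]/(gT-a)$, exhibiting $\operatorname{Spec} k[X]^{\mathbb{G}_a}$ as the graph of $a/g$ over $\operatorname{Spec}(B)$. Comparing symbolic-graded pieces identifies $B[x_1]$ --- which in negative symbolic degree equals $x_1^{j}A$ --- with the total coordinate ring of the blow-up of $\operatorname{Spec}(A)$ along $\mathcal V(\mathfrak p)$ under the correspondence $x_1\leftrightarrow s_E$. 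The crux is then to show that this modification is \emph{trivial} at the level of Cox rings, i.e. that the effective monoid of $Z$ is carried to $\operatorname{Eff}(Z)+\mathbb{Z}_{\ge 0}\delta$ by a lattice automorphism; equivalently, that adjoining $x_1$ does not enlarge the underlying variety. Here projectivity and $\mathbb{Q}$-factoriality of $Z$ (which make $\operatorname{Eff}(Z)$ a salient, full-dimensional cone) together with the fact that the center $\mathcal V(\mathfrak p)$ is a single point (as it is in our construction, where it is the identity $e$) are precisely what rule out a strictly larger, non-isomorphic Cox ring --- such as the quadric cone $k[u,v,w][uv/w]$ that arises for a generic homogeneous ratio --- and force the graph above to be isomorphic to $\operatorname{Spec}(B)$. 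This salient-cone/single-point control is the main obstacle; once it is in place the desired isomorphism $B\cong k[X]^{\mathbb{G}_a}$ follows.
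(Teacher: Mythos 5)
Your treatment of the degenerate case ($\mathfrak p=0$, giving $B=A$ and $k[X]^{\mathbb{G}_a}=A[x_1]$ purely transcendental) matches the paper, and your observations that $x_1\notin B$ while $x_1=a/g\in\operatorname{Frac}(B)$ once $\mathfrak p\ne 0$ are correct. But the case $\mathfrak p\ne 0$ is exactly where your argument stops being a proof. You reduce to showing that the ``graph'' $B[T]/(gT-a)$ is isomorphic to $B$, observe (correctly, via your quadric-cone example) that this is false for a generic homogeneous ratio, and then assert that projectivity, $\mathbb{Q}$-factoriality, and the center being a single point ``are precisely what rule out'' the bad case --- calling this ``the main obstacle'' and leaving it unresolved. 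That obstacle is the entire content of the final assertion; no argument is given for why the effective monoid is carried to $\operatorname{Eff}(Z)+\mathbb{Z}_{\ge 0}\delta$ by a lattice automorphism, nor why that would even imply a ring isomorphism $B\cong B[x_1]$. As written, the dichotomy is not established.

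The missing idea is integrality, and it makes the cone-geometric machinery unnecessary. The paper's route in the case $B\ne A$ is: pick $f_i(X)=g_i(R)/x_1^i\in B$ with $g_i(R)\in A$; then $x_1^i-g_i(R)/f_i(X)=0$ exhibits $x_1$ as integral over the localization $B_{f_i(X)}$, and from this one concludes that $k[X]^{\mathbb{G}_a}=B[x_1]$ is the integral closure $\widetilde{B}$ of $B$ inside $\operatorname{Frac}(B)$. The hypotheses on $Z$ (projective, $\mathbb{Q}$-factorial, normal, free class group) enter only through the fact that the Cox ring of such a variety is a normal ring (the paper cites \cite[Proposition 1.4.1.5]{ADHA}), so $B=\widetilde{B}\cong k[X]^{\mathbb{G}_a}$. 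In other words, the Cox-ring hypothesis is used for normality of $B$, not for any control of $\operatorname{Eff}(Z)$ or of the blow-up modification; your proposal never invokes normality at all, which is why it gets stuck. Separately, note that your proposal takes $k[X]^{\mathbb{G}_a}=B[x_1]$, the direct-sum decompositions, the identification $(A:x_1^i)_{k[X]}\cong\mathfrak p^{(i)}$, and the finite-generation equivalence as given, whereas these occupy most of the paper's proof and are part of the statement; they would also need to be supplied.
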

\begin{proof}
    Once we prove ~\eqref{E:25}, ~\eqref{E:26}, ~\eqref{E:27}, ~\eqref{E:32}, and ~\eqref{E:43}, the preceding identities about $ B $ follow.  If $ f(X) \in k[X]^{\mathbb{G}_{a}} $, then $ f(X) \in k[X]^{\mathbb{G}_{a}}_{x_{1}} $ and $ f(X) $ is a polynomial.  Equation ~\eqref{E:25} follows.  By Proposition ~\ref{P:itsTheRing} (see page \pageref{P:itsTheRing}), $ k[X]_{x_{1}}^{\mathbb{G}_{a}} $ is equal to $ (A[x_{1}])_{x_{1}} $.

    Let $ f(X) $ be an invariant polynomial.  Because $ f(X) \in k[X]_{x_{1}}^{\mathbb{G}_{a}} $ and $ k[X]^{\mathbb{G}_{a}}_{x_{1}} \cong \left(A[x_{1}]\right)_{x_{1}} $, there is a $ d \in \mathbb{N}_{0} $ and there are $ g_{i}(R) \in A $ for $ 0 \le i \le d $ such that:
    \begin{equation} \label{E:44}
        f(X) = \sum_{i=0}^{d} \left(g_{i}(R)/x_{1}^{i}\right).
    \end{equation}
    Since $ f(X) $ is a polynomial, $ x_{1}^{i} $ divides $ g_{i}(R) $.  Therefore $ g_{i}(R)/x_{1}^{i} \in (A: x_{1}^{i})_{k[X]} $.  If $ f(X) \in (A:x_{1}^{i})_{k[X]} $, then there is a $ g(R) \in A $ such that $ g(R) $ is equal to $ x_{1}^{i}f(X) $, i.e., $ f(X) $ is equal to $ g(R)/x_{1}^{i} $.  We will be finished proving ~\eqref{E:26} once we show that the intersection of $ (A:x_{1}^{i})_{k[X]} $ and $ (A:x_{1}^{j})_{k[X]} $ is empty whenever $ i $ and $ j $ are not equal.  If $ f(X) $ is a polynomial in the intersection of $ (A:x_{1}^{i})_{k[X]} $ and $ (A: x_{1}^{j})_{k[X]} $, then there are polynomials $ g_{i}(R), g_{j}(R) \in A $ such that
    \begin{align*}
        g_{i}(R)/x_{1}^{i} &= f(X) \\
        &= g_{j}(R)/x_{1}^{j}.
    \end{align*}
    This implies that $ x_{1}^{j} g_{i}(R)-x_{1}^{i} g_{j}(R) $ is equal to zero.  This contradicts Theorem ~\ref{T:qJustification} (see page \pageref{T:qJustification}), which states that $ A[x_{1}] $ is an $ n-1 $-dimensional polynomial ring, and finishes the proof of ~\eqref{E:26}. 
    
    Let $ \mathfrak{p} $ equal $ \langle x_{1} \rangle \cap A $ and assume that $ x_{1}^{i} $ divides $ g(R) \in A $.  Since $ x_{1}^{i} $ divides $ g(R) $, the polynomial $ g(R) \in \langle x_{1} \rangle k[X] $.  So $ g(R) $ is also in $ \langle x_{1}^{i} \rangle k[X]_{\langle x_{1} \rangle} $.  Because $ \langle x_{1}^{i} \rangle k[X]_{\langle x_{1} \rangle} \cap A_{\mathfrak{p}} $ is equal to $ \mathfrak{p}^{i} A_{\mathfrak{p}} $, the polynomial $ g(R) $ is in $ \mathfrak{p}^{i} A_{\mathfrak{p}} $.  If $ \iota : A \to A_{\mathfrak{p}} $ is the natural localization map, then since $ g(R) \in A $, it is in $ \iota^{-1}(\mathfrak{p}^{i} A_{\mathfrak{p}}) $, i.e., $ g(R) \in \mathfrak{p}^{(i)} $.  As a result, if $ g(R) \in A $ is divisible by $ x_{1}^{i} $, then it is in $ \mathfrak{p}^{(i)} $.

    We shall now show that if $ g(R) \in \mathfrak{p}^{(i)} $, then $ x_{1}^{i} $ divides $ g(R) $.  Since \linebreak $ g(R) \in \mathfrak{p}^{(i)} $, it is in $ (\langle x_{1} \rangle \cap A)^{i}A_{\mathfrak{p}} $.  If $ \phi: A_{\mathfrak{p}} \to k[X]_{\langle x_{1} \rangle} $ is the inclusion homomorphism, then $ \phi(\mathfrak{p}^{i})k[X]_{\langle x_{1} \rangle} = \langle x_{1}^{i}\rangle $.  Therefore $ g(R) \in \langle x_{1}^{i} \rangle k[X]_{\langle x_{1} \rangle} $.  Because $ k[X]_{\langle x_{1} \rangle} $ is a UFD and $ g(R) $ is a polynomial, $ g(R) \in \langle x_{1}^{i} \rangle k[X] $, i.e., $ x_{1}^{i} $ divides $ g(R) $.  So $ g(R) \in \mathfrak{p}^{(i)} $ if and only if then $ x_{1}^{i} $ divides $ g(R) \in A $.

    Define a ring homomorphism $ \Psi: \left(\oplus_{i=0}^{\infty} \mathfrak{p}^{(i)}y^{i}\right)[x_{1}] \to k[X]^{\mathbb{G}_{a}} $ as follows.  For any $ g_{i}(R) \in \mathfrak{p}^{(i)} $, the ring homomorphism $ \Psi $ maps $ g_{i}(R)y^{i} $ to $ g_{i}(R)/x_{1}^{i} $.  We may $ k[x_{1}] $-linearly extend $ \Psi $ to a ring homomorphism.  We claim that $ \Psi $ is an isomorphism.  For any $ f(X) \in k[X]^{\mathbb{G}_{a}} $ there is a non-negative integer $ d $ and a collection $ \{g_{i}(R)\}_{i=0}^{d} $ of elements of $ A $ such that ~\eqref{E:44} holds.  Now the following calculations show that $ \Psi $ is surjective,
    \begin{align*}
        \Psi\left(\sum_{i=0}^{d} g_{i}(R)y^{i}\right) &= \sum_{i=0}^{d} \Psi(g_{i}(R)y^{i}), \\
        &= \sum_{i=0}^{d} g_{i}(R)/x_{1}^{i}, \\
        &= f(X).
    \end{align*}
    If 
    \begin{align}
        0 &= \Psi\left(\sum_{i=0}^{d} \sum_{j=0}^{e_{i}} g_{i,j}(R)y^{i}x_{1}^{j} \right), \notag \\
        &= \sum_{i=0}^{d} \sum_{j=0}^{e_{i}} g_{i,j}(R)/x_{1}^{i-j}, \label{E:98}
    \end{align}
    then upon multiplying the left and right hand side of ~\eqref{E:98} by $ x_{1}^{d} $, we obtain
    \begin{align*}
        A[x_{1}] & \ni \sum_{i=0}^{d} \sum_{j=0}^{e_{i}} g_{i,j}(R)x_{1}^{d-i+j} \\
        &=0.
    \end{align*}
    Since $ x_{1} $ and $ r_{3}(X),\dots,r_{n}(X) $ are algebraically independent over $ k $, each polynomial $ g_{i,j}(R) $ is equal to zero.  As a result, $ \Psi $ is an isomorphism.  Because $ \mathfrak{p} $ is equal to $ \langle x_{1} \rangle \cap A $, ~\eqref{E:32} follows.

    We may break up each element of $ (A: x_{1}^{i})_{k[X]} $ and $ (\langle x_{1} \rangle \cap A)^{(i)} $ into the sum of multi-graded parts, so ~\eqref{E:27} and ~\eqref{E:43} follow.

    Suppose there is a polynomial $ f(X) $ in $ B \setminus A $.  If this is the case, then there is a $ d \in \mathbb{N}_{0} $ and $ g_{i}(R) \in A $ for $ 0 \le i \le d $ such that $ f(X) $ is equal to $ \sum_{i=0}^{d} g_{i}(R)/x_{1}^{i} $.  So, there is an invariant $ f_{i}(X) \in B $ equal to $ g_{i}(R)/x_{1}^{i} $.  If $ \widetilde{B} $ denotes the integral closure of $ B $, then $ x_{1} \in \widetilde{B_{f_{i}(X)}} $ because $ x_{1}^{i}-g_{i}(R)/f_{i}(X) $ is equal to zero.  Since $ x_{1} $ is the only element of $ k[X]_{f_{i}(X)}^{\mathbb{G}_{a}} $ which is not in $ B_{f_{i}(X)} $,
    \begin{align*}
        k[X]_{f_{i}(X)}^{\mathbb{G}_{a}} & \cong \widetilde{B_{f_{i}(X)}}, \\
        &\cong \widetilde{B}_{f_{i}(X)}.
    \end{align*}
    Because
    \begin{align*}
        k[X]_{x_{1}f_{i}(X)}^{\mathbb{G}_{a}} & \cong k[x_{1},r_{3}(X),\dots,r_{n}(X)]_{x_{1}f_{i}(X)} \\
        & \cong \widetilde{B_{x_{1}f_{i}(X)}},
    \end{align*}
    the field of fractions of both $ \widetilde{B} $ and $ k[X]^{\mathbb{G}_{a}} $ are the same.  Since $ \widetilde{B} $ is contained in the ring of invariants, and the rings $ \widetilde{B} $ and $ k[X]^{\mathbb{G}_{a}} $ are integrally closed rings with the same field of fractions, $ k[X]^{\mathbb{G}_{a}} = \widetilde{B} $.
    
    As a result, $ k[X]^{\mathbb{G}_{a}} $ is integral over $ B $.  Therefore, $ B $ is a finitely generated $ k $ algebra if and only if $ k[X]^{\mathbb{G}_{a}} $ is as well.  If $ Z $ is a projective, $ \mathbb{Q} $-factorial, normal variety whose class group is finitely generated and free and $ \operatorname{Cox}(Z) \cong B $, then by \cite[Chapter 1, Basic Concepts, Section 4, Cox sheaves and Cox rings, Proposition 1.4.1.5]{ADHA}
    \begin{align*}
        B & \cong \operatorname{Cox}(Z) \\
        & \cong k[X]^{\mathbb{G}_{a}}.
    \end{align*}

    If there are no polynomials in $ B \setminus A $, then $ x_{1} $ is transcendental over $ B $ and $ B \cong A $.  If $ x_{1} $ is transcendental over $ B $, then
    \begin{align*}
         k[X]^{\mathbb{G}_{a}} &\cong B[x_{1}], \\
         & \cong A[x_{1}].
    \end{align*}
    So both $ B $ and $ k[X]^{\mathbb{G}_{a}} $ are finitely generated.
\end{proof}
\section{Toric Constructions Related to the Counterexample.} \label{S:specificTor}
We would like to recall some notions about toric varieties for the reader.

The reader may recall (see Chapter ~\ref{S:ToricVarieties} on page \pageref{S:ToricVarieties}) that all divisors on a toric variety are equivalent to a torus invariant divisor.  The torus invariant divisors of a toric variety correspond to the rays of the fan.  If $ \rho \in \Sigma(1) $ is a ray of the fan $ \Sigma $, then we follow the conventions of \cite{CoxLittleSchenck} by denoting the corresponding torus invariant divisor by $ D_{\rho} $.

Recall that if $ M $ is the lattice of characters of a toric variety $ X_{\Sigma} $ and $ N $ is the lattice of one parameter subgroups of $ X_{\Sigma} $, then there is a pairing $ \langle \cdot, \cdot \rangle $ obtained from the standard, Euclidean, inner product.  If $ D $ is a divisor $ \sum_{\rho \in \Sigma(1)} a_{\rho} D_{\rho} $, then \cite[Chapter 4, Divisors on Toric Varieties, Section 2, Cartier Divisors on Toric Varieties, Theorem 4.2.8]{CoxLittleSchenck} says:
\begin{quote}
    Let $ X_{\Sigma} $ be the toric variety of the fan $ \Sigma $ and let $ D = \sum_{\rho} a_{\rho} D_{\rho} $.  Then the following are equivalent:
    \begin{itemize}
        \item[a)] $ D $ is Cartier.
        \item[b)] $ D $ is principle on the affine open subset $ U_{\sigma} $ for all $ \sigma \in \Sigma $.
        \item[c)] For each $ \sigma \in \Sigma $, there is $ m_{\sigma} \in M $ with $ \langle m_{\sigma}, u_{\rho} \rangle = -a_{\rho} $ for all $ \rho \in \sigma(1) $.
        \item[d)] For each $ \sigma \in \Sigma_{\max} $, there is $ m_{\sigma} $ with $ \langle m_{\sigma}, u_{\rho} \rangle = -a_{\rho} $ for all $ \rho \in \sigma(1) $.
    \end{itemize}
\end{quote}
The collection $ m_{\sigma} \in M $ determine the support function $ \phi_{D} $ via $ \phi_{D}(u) = \langle m_{\sigma},u \rangle $ if $ u \in \sigma $ (see Chapter ~\ref{S:ToricVarieties} on page \pageref{S:ToricVarieties} regarding support functions of a Cartier divisor on a toric variety).  A toric variety is simplicial if and only if it is $ \mathbb{Q} $-factorial.  In this case, if $ D $ is a Weil divisor equal to $ \sum_{\rho} a_{\rho} D_{\rho} $, then there is a collection $ m_{\sigma} \in M_{\mathbb{Q}} $, which determine the support function $ \phi_{D} $ via $ \phi_{D}(u) = \langle m_{\sigma}, u \rangle $ if $ u \in \sigma $.

We will use a specific toric variety to create a counterexample to the \linebreak Weitzenb\"{o}ck conjecture.  The next proposition describes this toric variety, its class group, Nef cone, and intersection pairing.
\begin{prop} \label{P:specificToricVariety}
    Let $ p \in \operatorname{Spec}(\mathbb{Z}) $ and let $ j_{1},j_{2},j_{3} $ be natural numbers such that
    \begin{align*}
        j_{3} & > j_{2} >j_{1}, \\
        j_{3}+j_{2} & \equiv 1 \mod{2}.
    \end{align*}
    Let $ u_{\rho_{1}},\dots,u_{\rho_{4}} $ be the following points in $ \mathbb{R}^{2} $:
    \begin{align*}
        u_{\rho_{1}} &= (p^{j_{2}-j_{1}}-1, p^{j_{3}-j_{1}}-1) \\
        u_{\rho_{2}} &= (-p^{j_{2}-j_{1}},-p^{j_{3}-j_{1}}) \\
        u_{\rho_{3}} &= (1,0) \\
        u_{\rho_{4}} &= (0,1),
    \end{align*}
    and let $ \rho_{i} $ be the ray $ \mathbb{R}_{+} u_{\rho_{i}} $.  If $ M $ and $ N $ are both isomorphic to $ \mathbb{Z}^{2} $, and $ \Sigma $ is the fan obtained from the two dimensional cones below and their faces:
    \begin{align*}
        \sigma_{1} &= \operatorname{Cone}(u_{\rho_{1}}, u_{\rho_{4}}) \\
        \sigma_{2} &= \operatorname{Cone}(u_{\rho_{1}}, u_{\rho_{3}}) \\
        \sigma_{3} &= \operatorname{Cone}(u_{\rho_{2}}, u_{\rho_{3}}) \\
        \sigma_{4} &= \operatorname{Cone}(u_{\rho_{2}}, u_{\rho_{4}}),
    \end{align*}
    then the fan $ \Sigma $ determines a projective, simplicial, toric variety $ X_{\Sigma} $.  The class group of $ X_{\Sigma} $ is generated by the divisors $ D_{\rho_{1}} $ and $ D_{\rho_{2}} $.  Let $ D $ equal $ aD_{\rho_{1}}+bD_{\rho_{2}} $.  The support function $ \phi_{D} $ is obtained from the following points of $ M_{\mathbb{Q}} $:
    \begin{align}
        m_{\sigma_{1}} & = (-a/(p^{j_{2}-j_{1}}-1),0), \notag \\
        \sigma_{1} &= \operatorname{Cone}(u_{\rho_{1}},u_{\rho_{4}}), \notag \\
        m_{\sigma_{2}} &= (0,-a/(p^{j_{3}-j_{1}}-1)), \notag \\
        \sigma_{2} &= \operatorname{Cone}(u_{\rho_{1}},u_{\rho_{3}}), \notag \\
        m_{\sigma_{3}} &= (0,b/p^{j_{3}-j_{1}}), \notag \\
        \sigma_{3} &= \operatorname{Cone}(u_{\rho_{2}},u_{\rho_{3}}), \notag \\
        m_{\sigma_{4}} &= (b/p^{j_{2}-j_{1}},0), \notag \\
        \sigma_{4} &= \operatorname{Cone}(u_{\rho_{2}},u_{\rho_{4}}). \label{E:39}
    \end{align}
    The Nef cone of $ X_{\Sigma} $ is composed of all divisors $ aD_{\rho_{1}}+bD_{\rho_{2}} $ such that:
    \begin{align}
        a &\le 0, \notag \\
        (p^{j_{2}-j_{1}}-1)b & \ge -p^{j_{2}-j_{1}}a, \label{E:60}
    \end{align}
    and the intersection pairing on $ X_{\Sigma} $ is determined by the following rules:
    \begin{align}
        D_{\rho_{2}}^{2} &= p^{-(j_{3}+j_{2}-2j_{1})}, \notag \\
        D_{\rho_{1}} \cdot D_{\rho_{2}} &= 0, \notag \\
        D_{\rho_{1}}^{2} &= -1/(p^{j_{3}-j_{1}}-1)(p^{j_{2}-j_{1}}-1). \label{E:56}
    \end{align}
\end{prop}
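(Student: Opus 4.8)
The plan is to verify the six assertions --- completeness (hence properness), projectivity, simpliciality, the class group, the support function, the Nef cone, and the intersection pairing --- by reducing each to an explicit computation with the four generators $u_{\rho_1},\dots,u_{\rho_4}$, isolating the one genuinely geometric reduction (for the Nef cone) as the crux. First I would check that $\Sigma$ is a complete fan. Ordering the rays by angle produces the cyclic sequence $\rho_3,\rho_1,\rho_4,\rho_2$: the generators $u_{\rho_3}=(1,0)$ and $u_{\rho_4}=(0,1)$ are the coordinate rays, $u_{\rho_1}$ lies in the first quadrant above the diagonal $y=x$ (since $p^{j_3-j_1}-1>p^{j_2-j_1}-1>0$), and $u_{\rho_2}$ lies in the open third quadrant. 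The maximal cones $\sigma_2,\sigma_1,\sigma_4,\sigma_3$ are precisely the consecutive pairs $(\rho_3,\rho_1),(\rho_1,\rho_4),(\rho_4,\rho_2),(\rho_2,\rho_3)$ in this cyclic order, each spanning less than a half-plane, so they cover $\mathbb{R}^2$ and meet only along shared rays; thus $\Sigma$ is complete and $X_{\Sigma}$ is proper. Each maximal cone has exactly two generators, so $\Sigma$ is simplicial and $X_{\Sigma}$ is $\mathbb{Q}$-factorial; since a complete toric surface is projective (equivalently, the Nef cone found below has nonempty interior and so produces an ample class), $X_{\Sigma}$ is projective.

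For the class group I would invoke the divisor sequence $0\to M\to\mathbb{Z}^{\Sigma(1)}\to\operatorname{Cl}(X_{\Sigma})\to 0$ of \cite{CoxLittleSchenck}, whose left map sends $m\mapsto\sum_{\rho}\langle m,u_{\rho}\rangle D_{\rho}$. Evaluating on the standard basis of $M=\mathbb{Z}^2$ yields the two relations
\[
D_{\rho_3}=-(p^{j_2-j_1}-1)D_{\rho_1}+p^{j_2-j_1}D_{\rho_2},\qquad
D_{\rho_4}=-(p^{j_3-j_1}-1)D_{\rho_1}+p^{j_3-j_1}D_{\rho_2}.
\]
These express $D_{\rho_3},D_{\rho_4}$ in terms of $D_{\rho_1},D_{\rho_2}$ with integer coefficients, and the $2\times2$ minor recording the coefficients of $D_{\rho_3},D_{\rho_4}$ is the identity, so the quotient has no torsion and $\operatorname{Cl}(X_{\Sigma})$ is free on $D_{\rho_1},D_{\rho_2}$. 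The support function is then pure linear algebra: for $D=aD_{\rho_1}+bD_{\rho_2}$ (so $a_{\rho_3}=a_{\rho_4}=0$) and each cone $\sigma_i$, I would solve $\langle m_{\sigma_i},u_{\rho}\rangle=-a_{\rho}$ over the two rays of $\sigma_i$. Since each $\sigma_i$ contains exactly one of $u_{\rho_3},u_{\rho_4}$ --- whose generators are coordinate vectors --- each system is triangular and returns exactly the listed $m_{\sigma_i}$.

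The intersection numbers I would compute from the adjacency data. For rays $\rho,\rho'$ spanning a cone $\sigma$ one has $D_{\rho}\cdot D_{\rho'}=1/\operatorname{mult}(\sigma)$ with $\operatorname{mult}(\sigma)=|\det(u_{\rho},u_{\rho'})|$, while divisors of non-adjacent rays meet in $0$; in particular $D_{\rho_1}\cdot D_{\rho_2}=0$ since $\rho_1,\rho_2$ span no common cone. The four determinants give $D_{\rho_1}\cdot D_{\rho_4}=1/(p^{j_2-j_1}-1)$, $D_{\rho_1}\cdot D_{\rho_3}=1/(p^{j_3-j_1}-1)$, $D_{\rho_2}\cdot D_{\rho_3}=1/p^{j_3-j_1}$, and $D_{\rho_2}\cdot D_{\rho_4}=1/p^{j_2-j_1}$. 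Intersecting the first linear equivalence above with $D_{\rho_2}$ gives $D_{\rho_2}\cdot D_{\rho_3}=p^{j_2-j_1}D_{\rho_2}^2$, hence $D_{\rho_2}^2=p^{-(j_3+j_2-2j_1)}$; intersecting the second with $D_{\rho_1}$ gives $D_{\rho_1}\cdot D_{\rho_4}=-(p^{j_3-j_1}-1)D_{\rho_1}^2$, hence $D_{\rho_1}^2=-1/[(p^{j_3-j_1}-1)(p^{j_2-j_1}-1)]$, and intersecting with $D_{\rho_3}$ reproduces the same values as a consistency check.

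Finally, because the Mori cone of a complete toric variety is generated by the classes of its torus-invariant curves $D_{\rho_i}$, a divisor $D=aD_{\rho_1}+bD_{\rho_2}$ is nef if and only if $D\cdot D_{\rho_i}\ge0$ for all four $i$. Substituting the pairing gives the four inequalities $a\le0$ (from $\rho_1$), $b\ge0$ (from $\rho_2$), $ap^{j_3-j_1}+b(p^{j_3-j_1}-1)\ge0$ (from $\rho_3$), and $ap^{j_2-j_1}+b(p^{j_2-j_1}-1)\ge0$ (from $\rho_4$). I expect the main obstacle to be showing that the $\rho_4$-condition together with $a\le0$ already implies the $\rho_2$- and $\rho_3$-conditions, so that only the two listed inequalities survive. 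This reduces to the monotonicity fact that $x\mapsto x/(x-1)$ is strictly decreasing for $x>1$: combined with $p^{j_2-j_1}<p^{j_3-j_1}$ (that is, $j_2<j_3$), the lower bound on $b$ forced by the $\rho_4$-condition dominates the one required by the $\rho_3$-condition, and since $a\le0$ makes its right-hand side nonnegative it also yields $b\ge0$. Assembling these steps establishes all the stated claims.
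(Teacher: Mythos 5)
Your proposal is correct, and for the two substantive parts of the statement it takes a genuinely different route from the paper. For the intersection pairing, the paper first computes $D_{\rho_{2}}^{2}$ as twice the lattice area of the polytope $P_{D_{\rho_{2}}}$ and then extracts $D_{\rho_{1}}\cdot D_{\rho_{2}}$ and $D_{\rho_{1}}^{2}$ from a single wall relation via \cite[Proposition 6.4.4]{CoxLittleSchenck}; you instead read off all four adjacent products $D_{\rho}\cdot D_{\rho'}=1/\operatorname{mult}(\sigma)$ from the $2\times 2$ determinants, get $D_{\rho_{1}}\cdot D_{\rho_{2}}=0$ for free from the orbit--cone correspondence (the paper obtains it only as the solution of a linear system), and then substitute into the linear equivalences. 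Your route avoids the polytope-area computation entirely and comes with a built-in consistency check. For the Nef cone, the paper applies the primitive-collection criterion \cite[Theorem 6.4.9]{CoxLittleSchenck} to the two primitive collections $\{\rho_{1},\rho_{2}\}$ and $\{\rho_{3},\rho_{4}\}$, which needs only the support function and produces exactly the two inequalities of \eqref{E:60}; you use the toric cone theorem (nefness tested against the four invariant curves) and then must do the extra work of showing the $\rho_{2}$- and $\rho_{3}$-inequalities are implied by $a\le 0$ and the $\rho_{4}$-inequality via the monotonicity of $x\mapsto x/(x-1)$ --- that redundancy argument is correct and is the one nontrivial step your approach adds. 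The remaining items (completeness via the cyclic ray ordering rather than citing the support criterion, projectivity of a complete toric surface, the class-group relations from $\operatorname{div}(\chi^{(1,0)})$ and $\operatorname{div}(\chi^{(0,1)})$, and the triangular solves for the $m_{\sigma_{i}}$) match the paper's computations essentially verbatim.
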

\begin{proof}
    The support of $ \Sigma $ is $ N_{\mathbb{R}} \cong \mathbb{R}^{2} $.  So by Proposition ~\ref{P:toricComplete} (see page \pageref{P:toricComplete}) the variety $ X_{\Sigma} $ is complete.  Moreover, a toric surface is always simplicial.  By \cite[Chapter 6, Line Bundles on a Toric Variety, Section 3, The Nef and Mori Cones, 6.3.25]{CoxLittleSchenck}, $ X_{\Sigma} $ is projective.

    Since characters are principle divisors,
    \begin{align*}
        0 &\sim \operatorname{div}(\chi^{(1,0)}) \\
        & \sim (p^{j_{2}-j_{1}}-1)D_{\rho_{1}}-p^{j_{2}-j_{1}}D_{\rho_{2}}+D_{\rho_{3}} \\
        0 & \sim \operatorname{div}(\chi^{(0,1)}) \\
        & \sim (p^{j_{3}-j_{1}}-1)D_{\rho_{1}}-p^{j_{3}-j_{1}}D_{\rho_{2}}+D_{\rho_{4}}.
    \end{align*}
    Therefore,
    \begin{align*}
        D_{\rho_{3}} & \sim -(p^{j_{2}-j_{1}}-1)D_{\rho_{1}}+p^{j_{2}-j_{1}}D_{\rho_{2}} \\
        D_{\rho_{4}} & \sim -(p^{j_{3}-j_{1}}-1)D_{\rho_{1}}+p^{j_{3}-j_{1}}D_{\rho_{2}}.
    \end{align*}
    So, $ D_{\rho_{1}} $ and $ D_{\rho_{2}} $ generate the class group of $ X_{\Sigma} $.

    If $ D $ is the divisor $ aD_{\rho_{1}}+bD_{\rho_{2}} $, and $ m_{\sigma_{1}},\dots,m_{\sigma_{4}} $ are the points in ~\eqref{E:39}, then:
    \begin{align*}
        -a &= \langle (-a/(p^{j_{2}-j_{1}}-1),0),(p^{j_{2}-j_{1}}-1,p^{j_{3}-j_{1}}-1) \rangle \\
        &= \langle m_{\sigma_{1}}, u_{\rho_{1}} \rangle \\
        0 &= \langle (-a/(p^{j_{2}-j_{1}}-1),0), (0,1) \rangle \\
        &= \langle m_{\sigma_{1}}, u_{\rho_{4}} \rangle,
    \end{align*}
    so $ m_{\sigma_{1}} $ is indeed the point in ~\eqref{E:39}.  The following identities hold for $ m_{\sigma_{2}} $:
    \begin{align*}
        -a &= \langle (0,-a/(p^{j_{3}-j_{1}}-1)),(p^{j_{2}-j_{1}}-1, p^{j_{3}-j_{1}}-1) \rangle \\
        &= \langle m_{\sigma_{2}}, u_{\rho_{1}} \rangle \\
        0 &= \langle (0,-a/(p^{j_{3}-j_{1}}-1)), (1,0) \rangle \\
        &= \langle m_{\sigma_{2}}, u_{\rho_{3}} \rangle,
    \end{align*}
    the next identities hold for $ m_{\sigma_{3}} $:
    \begin{align*}
        -b &= \langle (0,b/p^{j_{3}-j_{1}}),(-p^{j_{2}-j_{1}}, -p^{j_{3}-j_{1}}) \rangle \\
        &= \langle m_{\sigma_{3}}, u_{\rho_{2}} \rangle \\
        0 &= \langle (0,b/p^{j_{3}-j_{1}}), (1,0) \rangle \\
        &= \langle m_{\sigma_{3}}, u_{\rho_{3}} \rangle,
    \end{align*}
    and finally, the following identities hold for $ m_{\sigma_{4}} $:
    \begin{align*}
        -b &= \langle (b/p^{j_{2}-j_{1}},0), (-p^{j_{2}-j_{1}}, -p^{j_{3}-j_{1}}) \rangle \\
        &= \langle m_{\sigma_{4}}, u_{\rho_{2}} \rangle \\
        0 &= \langle (b/p^{j_{2}-j_{1}},0), (0,1) \rangle \\
        &= \langle m_{\sigma_{4}}, u_{\rho_{4}} \rangle.
    \end{align*}
    Therefore $ m_{\sigma_{1}},\dots,m_{\sigma_{4}} $ are the points described in ~\eqref{E:39}.

    Cox, Little and Schenck give a criterion for a divisor on a projective, simplicial, toric variety to be Nef in \cite[Chapter 6, Line Bundles on Toric Varieties, Section 4, The Simplicial Case, Thoerem 6.4.9]{CoxLittleSchenck}:
    \begin{quote}
        Let $ X_{\Sigma} $ be a projective simplicial toric variety.  Then:
        \begin{itemize}
            \item[a)] A Cartier divisor $ D $ is nef if and only if its support function $ \phi_{D} $ satisfies:
            \begin{equation*}
                \phi_{D}(u_{\rho_{1}}+\cdots+u_{\rho_{k}}) \ge \phi_{D}(u_{\rho_{1}})+\cdots + \phi_{D}(u_{\rho_{k}}),
            \end{equation*}
            for all primitive collections $ P = \{\rho_{1},\dots,\rho_{k}\} $ of $ \Sigma $.
            \item[b)] A Cartier divisor $ D $ is ample if and only if its support function $ \phi_{D} $ satisfies:
            \begin{equation*}
                \phi_{D}(u_{\rho_{1}}+\cdots+u_{\rho_{k}}) \ge \phi_{D}(u_{\rho_{1}})+\cdots+\phi_{D}(u_{\rho_{k}}),
            \end{equation*}
            for all primitive collections $ P = \{u_{\rho_{1}},\dots,u_{\rho_{k}}\} $ of $ \Sigma $.
        \end{itemize}
    \end{quote}

    The primitive collections of $ \Sigma $ are $ \{\rho_{1},\rho_{2}\} $ and $ \{\rho_{3},\rho_{4}\} $.  Observe that \linebreak $ u_{\rho_{1}}+u_{\rho_{2}} $ is equal to $ (-1,-1) $ and that $ u_{\rho_{3}}+u_{\rho_{4}} $ is equal to $ (1,1) $.  The following inequalities show that $ (-1,-1) \in \sigma_{4} $ and $ (1,1) \in \sigma_{2} $:
    \begin{align*}
        (-1,-1) &= \frac{1}{p^{j_{2}-j_{1}}} (-p^{j_{2}-j_{1}}, -p^{j_{3}-j_{1}}) +\frac{p^{j_{3}-j_{1}}-p^{j_{2}-j_{1}}}{p^{j_{2}-j_{1}}}(0,1) \\
        &=\frac{1}{p^{j_{2}-j_{1}}} u_{\rho_{2}}+ \frac{p^{j_{3}-j_{1}}-p^{j_{2}-j_{1}}}{p^{j_{2}-j_{1}}} u_{\rho_{4}} \\
        &\in \mathbb{R}_{+}u_{\rho_{2}}+\mathbb{R}_{+}u_{\rho_{4}} \\
        &= \operatorname{Cone}(u_{\rho_{2}}, u_{\rho_{4}}) \\
        &=\sigma_{4} \\
        (1,1) &= \frac{1}{p^{j_{3}-j_{1}}-1}(p^{j_{2}-j_{1}}-1,p^{j_{3}-j_{1}}-1) +\frac{p^{j_{3}-j_{1}}-p^{j_{2}-j_{1}}}{p^{j_{3}-j_{1}}-1} (1,0) \\
        &= \frac{1}{p^{j_{3}-j_{1}}-1} u_{\rho_{1}}+\frac{p^{j_{3}-j_{1}}-p^{j_{2}-j_{1}}}{p^{j_{3}-j_{1}}-1} u_{\rho_{3}} \\
        &\in \mathbb{R}_{+} u_{\rho_{1}}+\mathbb{R}_{+}u_{\rho_{3}}  \\
        &= \operatorname{Cone}(u_{\rho_{1}}, u_{\rho_{3}}) \\
        &= \sigma_{2}.
    \end{align*}
    The earlier criterion and ~\eqref{E:39} show that the following inequalities determine which divisors $ aD_{\rho_{1}}+bD_{\rho_{2}} $ are Nef,
    \begin{align*}
        -b/p^{j_{2}-j_{1}} &=\langle (b/p^{j_{2}-j_{1}},0),(-1,-1) \rangle \\
        &= \langle m_{\sigma_{4}}, (-1,-1) \rangle \\
        &= \phi_{aD_{\rho_{1}}+bD_{\rho_{2}}}((-1,-1)) \\
        &= \phi_{aD_{\rho_{1}}+bD_{\rho_{2}}}(u_{\rho_{1}}+u_{\rho_{2}}) \\
        & \ge \phi_{aD_{\rho_{1}}+bD_{\rho_{2}}}(u_{\rho_{1}})+\phi_{aD_{\rho_{1}}+bD_{\rho_{2}}}(u_{\rho_{2}}) \\
        &= -a-b \\
        -a/(p^{j_{3}-j_{1}}-1) &= \langle (0,-a/(p^{j_{3}-j_{1}}-1)), (1,1) \rangle \\
        &= \langle m_{\sigma_{2}}, (1,1) \rangle \\
        &= \phi_{aD_{\rho_{1}}+bD_{\rho_{2}}}((1,1)) \\
        &= \phi_{aD_{\rho_{1}}+bD_{\rho_{2}}}(u_{\rho_{3}}+u_{\rho_{4}}) \\
        &\ge \phi_{aD_{\rho_{1}}+bD_{\rho_{2}}}(u_{\rho_{3}}) + \phi_{aD_{\rho_{1}}+bD_{\rho_{2}}}(u_{\rho_{4}}) \\
        &= 0.
    \end{align*}
    These inequalities simplify to the ones in ~\eqref{E:60}.

    For a divisor $ D $ equal to $ \sum_{\rho \in \Sigma(1)} a_{\rho}D_{\rho} $ with full dimensional lattice polytope $ P_{D} $ described below:
    \begin{equation*}
        P_{D} = \{m \in M_{\mathbb{R}}, \, \, \text{s.t.} \, \, \langle m,u_{\rho} \rangle \ge -a_{\rho} \quad \rho \in \Sigma(1)\},
    \end{equation*}
    the self intersection $ D^{2} $ is equal to twice the area of $ P_{D} $ by \cite[Chapter 9, Sheaf Cohomology of Toric Varieties, Section 4, Lattice Polytopes and Differential Forms, Example 9.4.4]{CoxLittleSchenck}.  If $ m \in M_{\mathbb{R}} $ is equal to $ (m_{1},m_{2}) $,then the polytope for $ D_{\rho_{2}} $ is defined by the inequalities below:
    \begin{align*}
        (p^{j_{2}-j_{1}}-1)m_{1}+(p^{j_{3}-j_{1}}-1)m_{2} &= \langle (m_{1},m_{2}), u_{\rho_{1}} \rangle \\
        &\ge 0 \\
        -p^{j_{2}-j_{1}}m_{1}-p^{j_{3}-j_{1}}m_{2} &= \langle (m_{1},m_{2}), u_{\rho_{2}} \rangle \\
        &\ge -1 \\
        m_{1} & = \langle (m_{1},m_{2}), u_{\rho_{3}} \rangle \\
        & \ge 0 \\
        m_{2} &= \langle (m_{1},m_{2}), u_{\rho_{4}}, \rangle \\
        & \ge 0.
    \end{align*}
    The first quadrant is properly contained in the set of points
    \begin{equation*}
        m_{2} \ge -(p^{j_{2}-j_{1}}-1)/(p^{j_{3}-j_{1}}-1)m_{1}.
    \end{equation*}
    The only portion of the first quadrant contained within the set of points \linebreak $ m_{2} \le -p^{-(j_{3}-j_{2})}m_{1}+p^{-(j_{3}-j_{1})} $ is the triangle with vertices $ (0,0),(0,p^{-(j_{3}-j_{1})}) $ and $ (p^{-(j_{2}-j_{1})},0) $.  Therefore, the following identities hold:
    \begin{align*}
        D_{\rho_{2}}^{2} & = 2 \operatorname{area}(P_{D_{\rho_{2}}}) \\
        &= p^{-(j_{3}+j_{2}-2j_{1})}.
    \end{align*}
    There is a wall relation described below:
    \begin{equation*}
        p^{j_{3}-j_{1}}u_{\rho_{1}}+(p^{j_{3}-j_{1}}-1)u_{\rho_{2}}- ((p^{j_{2}-j_{1}}-1)p^{j_{3}-j_{1}}-(p^{j_{3}-j_{1}}-1)p^{j_{2}-j_{1}})u_{\rho_{3}} =(0,0).
    \end{equation*}
    By \cite[Chapter 6, Line Bundles on Projective Toric Varieties, Section 4, The Simplicial Case, Proposition 6.4.4]{CoxLittleSchenck} (see Chapter ~\ref{S:ToricVarieties} on page \pageref{S:ToricVarieties} regarding the definition of the multiplicity of a cone $ \sigma $)
    \begin{align*}
        D_{\rho_{2}} \cdot D_{\rho_{3}} &= \operatorname{mult}(\rho_{3})/\operatorname{mult}(\sigma_{3}) \\
        D_{\rho_{1}} \cdot D_{\rho_{3}} &= \operatorname{mult}(\rho_{3})/\operatorname{mult}(\sigma_{2}).
    \end{align*}
    The multiplicities of $ \rho_{3}, \sigma_{2} $ and $ \sigma_{3} $ are:
    \begin{align*}
        \operatorname{mult}(\rho_{3}) &= 1 \\
        \operatorname{mult}(\sigma_{3}) &= p^{j_{3}-j_{1}} \\
        \operatorname{mult}(\sigma_{2}) &= p^{j_{3}-j_{1}}-1.
    \end{align*}
    As a result,
    \begin{align}
        p^{-(j_{3}-j_{1})} &= \operatorname{mult}(\rho_{3})/\operatorname{mult}(\sigma_{3}), \notag \\
        &= D_{\rho_{2}} \cdot D_{\rho_{3}}, \notag \\
        &= -(p^{j_{2}-j_{1}}-1)D_{\rho_{1}} \cdot D_{\rho_{2}}+p^{j_{2}-j_{1}} D_{\rho_{2}}^{2}, \notag\\
        &= -(p^{j_{2}-j_{1}}-1)D_{\rho_{1}} \cdot D_{\rho_{2}}+p^{-(j_{3}-j_{1})}, \label{E:40} \\
        1/(p^{j_{3}-j_{1}}-1) &= \operatorname{mult}(\rho_{3})/\operatorname{mult}(\sigma_{2}), \notag \\
        &= D_{\rho_{1}} \cdot D_{\rho_{3}}, \notag \\
        &= -(p^{j_{2}-j_{1}}-1)D_{\rho_{1}}^{2}+p^{j_{2}-j_{1}} D_{\rho_{1}} \cdot D_{\rho_{2}}. \label{E:45}
    \end{align}
    After using ~\eqref{E:40} to solve for $ D_{\rho_{1}} \cdot D_{\rho_{2}} $ and then ~\eqref{E:45} to solve for $ D_{\rho_{1}}^{2} $ we obtain the formulae in ~\eqref{E:56}.
\end{proof}
Let us recall the construction of a simplicial, toric, variety as the quotient of a quasi affine variety by a diagonalizable group.

Let $ G $ equal $ \operatorname{Hom}_{\mathbb{Z}}(\operatorname{Cl}(X_{\Sigma}), \mathbb{G}_{m}) $.  The points of the group $ G $ are the points $ (a_{1},\dots,a_{\operatorname{card}(\Sigma(1))}) $ such that $ \prod_{i=1}^{\operatorname{card}(\Sigma(1))} a_{i}^{\langle m, u_{\rho_{i}} \rangle} $ is equal to one for all \linebreak $ m \in M $ by \cite[Chapter 5, Homogeneous Coordinates on Toric Varieties, Section 1, Quotient Constructions of Toric Varieties, Lemma 5.1.1]{CoxLittleSchenck}.  If $ X_{\Sigma} $ has no torus factors, then the following sequence is exact:
\begin{equation*}
    \xymatrix{
         0 \ar[r] & M \ar[r] & \mathbb{Z}^{\operatorname{card}(\Sigma(1))} \ar[r] & \operatorname{Cl}(X_{\Sigma}) \ar[r] & 0
    }.
\end{equation*}
Therefore, the next sequence is exact:
\begin{equation*}
    \xymatrix{
        1 \ar[r] & G \ar[r] & \mathbb{G}_{m}^{\operatorname{card}(\Sigma(1))} \ar[r] & T_{X_{\Sigma}} \ar[r] & 1
    }.
\end{equation*}
If $ \sigma $ is a strongly convex, rational, polyhedral cone of $ \Sigma $, and the affine coordinate ring of $ \mathbb{A}^{\operatorname{card}(\Sigma(1))}_{k} $ is $ k[y_{\rho_{1}},\dots,y_{\rho_{\operatorname{card}(\Sigma(1))}}] $, then let $ \widehat{\sigma} $ be the monomial $ \prod_{\rho \notin \sigma} y_{\rho} $.  The ideal $ B(\Sigma) $ is the ideal $ \langle \widehat{\sigma} \rangle_{\sigma \in \Sigma} $.  The sub-variety $ Z(\Sigma) $ is the zero set of $ B(\Sigma) $.  Another way to think of the sub-variety $ Z(\Sigma) $ is via primitive collections.

A subset $ C \subseteq \Sigma(1) $ is a primitive collection if $ C \not \subseteq \sigma(1) $ for all $ \sigma \in \Sigma $, and for every proper subset $ C^{'} \subseteq C $ there is a $ \sigma \in \Sigma $ such that $ C^{'} \subseteq \sigma(1) $.  The variety $ Z(\Sigma) $ is equal to $ \cup_{C} \mathcal{V}(\langle y_{\rho} \rangle_{\rho \in C}) $.

By \cite[Chapter 5, Section 1, Theorem 5.1.11)]{CoxLittleSchenck}, the variety $ X_{\Sigma} $ is isomorphic to $ \left( \mathbb{A}^{\operatorname{card}(\Sigma(1))}_{k} \setminus Z(\Sigma) \right) //G $, and $ \mathbb{A}^{\operatorname{card}(\Sigma(1))}_{k} \setminus Z(\Sigma) \to X_{\Sigma} $ is a geometric quotient if and only if $ \Sigma $ is simplicial.

\section{The Main Result.} \label{S:Main}
\begin{thm} \label{T:mainResult}
    If $ k $ is a field of characteristic $ p>0 $, then there exists a linear representation $ \beta: \mathbb{G}_{a} \to \operatorname{GL}(\mathbf{V}) $ such that the ring $ S_{k}(\mathbf{V}^{\ast})^{\mathbb{G}_{a}} $ is not a finitely generated $ k $-algebra.
\end{thm}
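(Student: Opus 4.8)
The plan is to realise the invariant ring of an explicit six–dimensional $\mathbb{G}_{a}$–representation as the Cox ring of the blow-up of the toric surface $X_{\Sigma}$ of Proposition~\ref{P:specificToricVariety} at the identity $e$ of its torus, and then to defeat finite generation purely geometrically by exhibiting an \emph{irrational} extremal ray of the pseudo-effective cone of that blow-up. The reduction to geometry is exactly what Theorem~\ref{T:CoxRingStart} supplies: once $\beta$ is built so that $(x_{2},x_{1})$ is a principle pair and the associated polynomial ring $A=k[x_{3},r_{1}(X),r_{2}(X),r_{3}(X)]$ agrees, as a bigraded ring, with $\operatorname{Cox}(X_{\Sigma})$, the theorem identifies $B=\bigoplus_{i}\mathfrak{p}^{(i)}$ (the symbolic Rees algebra of $\mathfrak{p}=\langle x_{1}\rangle\cap A$) with the Cox ring of the blow-up $W$ of $X_{\Sigma}$ at $e=\mathcal{V}(\mathfrak{p})$, and guarantees that $B$ is a finitely generated $k$-algebra if and only if $k[X]^{\mathbb{G}_{a}}$ is. Since $W$ is a projective, $\mathbb{Q}$-factorial, normal surface with free class group $\operatorname{Cl}(W)\cong\operatorname{Cl}(X_{\Sigma})\oplus\mathbb{Z}E$, the hypotheses of the last clause of Theorem~\ref{T:CoxRingStart} hold, so it suffices to prove that $\operatorname{Cox}(W)$ is not finitely generated.

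First I would construct $\beta$ by running the outline backwards. Starting from the fan $\Sigma$ of Proposition~\ref{P:specificToricVariety}, I read off the $2\times 4$ weight matrix $C$ recording the bidegrees of the four Cox coordinates of $X_{\Sigma}$, and then prescribe the co-action $\beta^{\sharp}(x_{i})=v_{i}(X,t)$ on a basis $\{x_{1},\dots,x_{6}\}$ of $\mathbf{V}^{\ast}$ using \emph{additive} polynomials in $t$ (Frobenius twists, available only when $\operatorname{char}(k)=p>0$, which is precisely why the construction evades Weitzenb\"{o}ck) so that $(x_{2},x_{1})$ is a principle pair and the invariants $r_{i}(X)$ produced from $f_{i}(X)=v_{i}(X,-x_{2}/x_{1})$ by Proposition~\ref{T:qJustification} carry exactly the weights dictated by $C$, with none divisible by $x_{1}$. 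Lemma~\ref{L:multiGrading} furnishes the commuting $\mathbb{G}_{m}^{2}$-action giving the bigrading, and the toric quotient presentation $X_{\Sigma}\cong(\mathbb{A}^{4}_{k}\setminus Z(\Sigma))/\!/\mathbb{G}_{m}^{2}$ then yields the graded isomorphism $A\cong\operatorname{Cox}(X_{\Sigma})$. A direct computation with the coordinate functions finishes this stage by checking that $\mathfrak{p}=\langle x_{1}\rangle\cap A$ is the bihomogeneous ideal cutting out the identity $e$ of the torus.

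The heart of the argument is the geometry of $W$. Here $\operatorname{Pic}(W)\otimes\mathbb{R}\cong\mathbb{R}^{3}$ has basis $\pi^{\ast}D_{\rho_{1}},\pi^{\ast}D_{\rho_{2}},E$, and by~\eqref{E:56} the intersection form is diagonal,
\begin{equation*}
\operatorname{diag}\!\left(\tfrac{-1}{(p^{j_{3}-j_{1}}-1)(p^{j_{2}-j_{1}}-1)},\ p^{-(j_{3}+j_{2}-2j_{1})},\ -1\right),
\end{equation*}
of signature $(1,2)$. I would then study the class $P=\pi^{\ast}D_{\rho_{2}}-\sqrt{p^{-(j_{3}+j_{2}-2j_{1})}}\,E$, for which $P^{2}=D_{\rho_{2}}^{2}-\big(\sqrt{D_{\rho_{2}}^{2}}\big)^{2}=0$. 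Because $j_{3}+j_{2}\equiv 1\pmod 2$ the exponent $j_{3}+j_{2}-2j_{1}$ is odd, so $\sqrt{p^{-(j_{3}+j_{2}-2j_{1})}}$ is irrational and $P$ spans an irrational ray. Granting for the moment that $P$ is nef, it lies on the boundary of the round positive cone, hence in $\partial\overline{\operatorname{Eff}}(W)$; moreover $P\cdot D\ge 0$ for every effective $D$, so $P^{\perp}$ is a supporting hyperplane and the face $\overline{\operatorname{Eff}}(W)\cap P^{\perp}$ is what I must compute. A class $C=a\pi^{\ast}D_{\rho_{1}}+b\pi^{\ast}D_{\rho_{2}}-cE$ satisfies $P\cdot C=\sqrt{D_{\rho_{2}}^{2}}\,(b\sqrt{D_{\rho_{2}}^{2}}-c)$, which vanishes for rational $b,c$ only when $b=c=0$; thus $P^{\perp}$ meets $\overline{\operatorname{Eff}}(W)$ in $\operatorname{Cone}(P,\tilde{D}_{\rho_{1}})$, where $\tilde{D}_{\rho_{1}}=\pi^{\ast}D_{\rho_{1}}$ (as $e\notin D_{\rho_{1}}$) is an effective curve of negative self-intersection orthogonal to $P$. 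Since $P$ is an extremal ray of this two-dimensional face of $\overline{\operatorname{Eff}}(W)$, it is an extremal ray of $\overline{\operatorname{Eff}}(W)$ itself. An irrational extremal ray forces $\overline{\operatorname{Eff}}(W)$ to fail to be rational polyhedral, so $W$ is not a Mori dream space and $\operatorname{Cox}(W)$ is not finitely generated; by Theorem~\ref{T:CoxRingStart} neither is $k[X]^{\mathbb{G}_{a}}$, which proves the claim.

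The step I expect to be the main obstacle is the nef-ness of $P$, equivalently the assertion that $\pi^{\ast}D_{\rho_{2}}-tE$ stays nef for every $t<\sqrt{D_{\rho_{2}}^{2}}$, i.e.\ that the Seshadri constant of $D_{\rho_{2}}$ at $e$ attains its maximal Hodge-index value $\sqrt{D_{\rho_{2}}^{2}}$. This is precisely the Nagata-type difficulty: one must rule out any curve $C$ through $e$ with $D_{\rho_{2}}\cdot C/\operatorname{mult}_{e}C<\sqrt{D_{\rho_{2}}^{2}}$. My approach is to control this using the explicit toric geometry of $X_{\Sigma}$ together with the fact that $e$ is the torus identity, so that every torus-invariant curve avoids $e$ and the nef-ness verification reduces to the intersection inequalities against the finitely many curve classes recorded by the fan $\Sigma$; the delicate part is showing that these, rather than some exotic curve through $e$, already determine the threshold.
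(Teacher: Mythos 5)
Your architecture coincides with the paper's: the same six-dimensional representation read off backwards from the fan of Proposition~\ref{P:specificToricVariety}, the same reduction through Theorem~\ref{T:CoxRingStart} to $\operatorname{Cox}(\operatorname{Bl}_{e}(X_{\Sigma}))$, and the same irrational class $P=\pi^{\ast}D_{\rho_{2}}-p^{-(j_{3}+j_{2}-2j_{1})/2}E$ with $P^{2}=0$. The divergence, and the gap, is at the decisive positivity step. You make the entire argument rest on the nef-ness of $P$, i.e.\ on the equality $\epsilon(X_{\Sigma},D_{\rho_{2}},e)=\sqrt{D_{\rho_{2}}^{2}}$, and you concede you do not yet have a proof of it. Worse, the route you sketch for filling it cannot work: $W=\operatorname{Bl}_{e}(X_{\Sigma})$ is not a toric variety (the blown-up point lies in the open orbit), so the nef cone of $W$ is not cut out by the finitely many curve classes recorded by the fan together with $E$; the curves that could violate nef-ness of $\pi^{\ast}D_{\rho_{2}}-tE$ are precisely the non-invariant curves through $e$ of high multiplicity, of which there are infinitely many classes, and ruling them out is the full Nagata-type problem you were trying to sidestep. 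Note also that since the threshold $\sqrt{D_{\rho_{2}}^{2}}$ is irrational while $D_{\rho_{2}}\cdot C/\operatorname{mult}_{e}C$ is rational for every curve $C$, no finite list of curves can ``determine the threshold'' in the sense you propose.

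The paper never needs that lower bound on the Seshadri constant in order to place $P$ in the pseudo-effective cone. Instead it shows directly that every rational class $cD_{\rho_{2}}-dE$ with $d/c<\sqrt{D_{\rho_{2}}^{2}}$ is effective up to multiples: such a class has positive self-intersection and positive product with a suitable ample class $(\pi\circ\pi_{1})^{\ast}(nD_{\rho_{2}}-D_{\rho_{1}})-s\pi_{1}^{\ast}(E)-rE_{1}$ on a resolution of singularities of $W$, so \cite[V, Corollary 1.8]{HartshorneAG} (Riemann--Roch on a smooth surface) makes large multiples effective. Pseudo-effectivity of $P$ follows by taking limits, with no control over curves through $e$ required. (The paper then asserts nef-ness of $P$ from ``self-duality'' to run the extremality argument; your face computation $\overline{\operatorname{Eff}}(W)\cap P^{\perp}=\operatorname{Cone}(P,\pi^{\ast}D_{\rho_{1}})$, resting on the observation that the only rational classes in $P^{\perp}$ are multiples of $\pi^{\ast}D_{\rho_{1}}$, is in fact a cleaner way to extract extremality once positivity of $P$ is in hand, and is worth keeping.) To repair your proposal, replace the Seshadri-constant step by the Riemann--Roch bigness criterion; as written, the proposal does not constitute a proof.
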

\begin{proof}
    Let $ j_{1},j_{2},j_{3} $ be natural numbers such that
    \begin{align*}
        j_{3} & > j_{2} >j_{1}, \\
        j_{3}+j_{2} & \equiv 1 \mod{2}.
    \end{align*}
    Let $ \gamma_{1},\gamma_{2},\gamma_{3} \in k^{\ast} $ be three elements such that for any $ 1 \le i<\ell \le 3 $
    \begin{equation*}
        \mathbb{F}_{p}(\sqrt[p^{j_{i}}-1]{\gamma_{i}}) \cap \mathbb{F}_{p}(\sqrt[p^{j_{\ell}}-1]{\gamma_{\ell}}) = \mathbb{F}_{p}.
    \end{equation*}
    Such elements clearly exist.  If $ a_{i}(t) $ is equal to $ t^{p^{j_{i}}}-\gamma_{i}t $, then $ \mathfrak{O}(a_{i}(t),a_{\ell}(t)) $ is equal to $ \mathfrak{O}\langle t \rangle $ for $ 1 \le i <\ell \le 3 $.  Therefore, there exist additive polynomials $ a_{1}(t),a_{2}(t) $ and $ a_{3}(t) $ such that $ \deg(a_{i}(t)) $ is equal to $ p^{j_{i}} $ and $ \mathfrak{O}(a_{i}(t),a_{\ell}(t)) = \mathfrak{O}\langle t \rangle $ for $ 1 \le i <\ell \le 3 $.  Let $ a_{1}(t),a_{2}(t),a_{3}(t) \in \mathfrak{O} $ be additive polynomials such that $ \deg(a_{i}(t)) $ is equal to $ p^{j_{i}} $ and $ \mathfrak{O}(a_{i}(t),a_{\ell}(t)) = \mathfrak{O}\langle t \rangle $ for $ 1 \le i <\ell \le 3 $.

    Let $ \mathbf{V} $ be a six dimensional vector space with dual basis $ \{x_{1},\dots,x_{6}\} $.  If $ \beta: \mathbb{G}_{a} \to \operatorname{GL}(\mathbf{V}) $ is the linear representation which acts on $ \{x_{1},\dots,x_{6}\} $ as follows:
    \begin{align*}
        x_{1} & \mapsto x_{1} \\
        x_{2} & \mapsto x_{2}+tx_{1} \\
        x_{3} & \mapsto x_{3} \\
        x_{4} & \mapsto x_{4}+a_{1}(t)x_{2} \\
        x_{5} & \mapsto x_{5}+a_{2}(t)x_{2} \\
        x_{6} & \mapsto x_{6}+a_{3}(t)x_{2},
    \end{align*}
    then we claim that $ k[X]^{\mathbb{G}_{a}} $ is not a finitely generated $ k $-algebra.  The pair $ (x_{2},x_{1}) $ is a principle pair.  As a result, if $ r_{1}(X),r_{2}(X),r_{3}(X) $ are the polynomials below:
    \begin{align}
        r_{1}(X) &= \left(x_{4}+a_{1}(-x_{2}/x_{1})x_{3}\right)x_{1}^{p^{j_{1}}}, \notag \\
        r_{2}(X) &= \left(x_{5}+a_{2}(-x_{2}/x_{1})x_{3}\right)x_{1}^{p^{j_{2}}}, \notag \\
        r_{3}(X) &= \left(x_{6}+a_{3}(-x_{2}/x_{1})x_{3}\right)x_{1}^{p^{j_{3}}}, \label{E:89}
    \end{align}
    then $ k[X]_{x_{1}}^{\mathbb{G}_{a}} = k[x_{1},x_{2},r_{1}(X),r_{2}(X),r_{3}(X)]_{x_{1}} $ by Proposition ~\ref{P:itsTheRing} (see page \pageref{P:itsTheRing}).

    The following identities hold:
    \begin{align*}
        r_{1}(X) & \equiv (-x_{2})^{p^{j_{1}}}x_{3} \mod{\langle x_{1} \rangle k[X]} \\
        r_{2}(X) & \equiv (-x_{2})^{p^{j_{2}}}x_{3} \mod{\langle x_{1} \rangle k[X]} \\
        r_{3}(X) & \equiv (-x_{2})^{p^{j_{3}}}x_{3} \mod{\langle x_{1} \rangle k[X]}.
    \end{align*}

    So, if $ A $ is the ring $ k[x_{3},r_{1}(X),r_{2}(X),r_{3}(X)] $, then the ideal $ \langle x_{1} \rangle \cap A $ is equal to
    \begin{multline*}
        \mathfrak{p}= \langle r_{1}(X)^{p^{j_{2}-j_{1}}}-x_{3}^{p^{j_{2}-j_{1}}-1}r_{2}(X),\\
        r_{1}(X)^{p^{j_{3}-j_{1}}}-x_{3}^{p^{j_{3}-j_{1}}-1}r_{3}(X),
        r_{2}(X)^{p^{j_{3}-j_{2}}}-x_{3}^{p^{j_{3}-j_{2}}-1}r_{3}(X) \rangle.
    \end{multline*}
    The ring $ A $ is a polynomial ring, so let $ y_{1},\dots,y_{4} $ be the functions defined below:
    \begin{align*}
        y_{1} &= x_{3} \\
        y_{2} &= r_{1}(X) \\
        y_{3} &= r_{2}(X) \\
        y_{4} &= r_{3}(X).
    \end{align*}
    If the affine coordinate ring of $ \mathbb{G}_{m}^{2} $ is $ k[z_{1}^{\pm 1},z_{2}^{\pm 1}] $, then let $ \mathbb{G}_{m}^{2} $ act on \linebreak $ \operatorname{Spec}(k[y_{1},\dots,y_{4}]) $ via the following co-action:
    \begin{align}
        y_{1} & \mapsto z_{2} y_{1}, \notag \\
        y_{2} & \mapsto z_{1}^{p^{j_{1}}}z_{2} y_{2}, \notag \\
        y_{3} & \mapsto z_{1}^{p^{j_{2}}}z_{2} y_{3}, \notag \\
        y_{4} & \mapsto z_{1}^{p^{j_{3}}}z_{2} y_{4}. \label{E:38}
    \end{align}
    If $ \widehat{Z} $ is the sub-variety
    \begin{equation*}
        \mathcal{V}(\langle y_{1}y_{3},y_{2}y_{3}, y_{1}y_{4}, y_{2}y_{4} \rangle) \cong \left(\mathcal{V}(\langle y_{1},y_{2} \rangle) \cup \mathcal{V}(\langle y_{3},y_{4} \rangle)\right),
    \end{equation*}
    then $ \left(\mathbb{A}^{4}_{k} \setminus \widehat{Z} \right)//\mathbb{G}_{m}^{2} $ is a variety which we claim is isomorphic to a toric variety $ X_{\Sigma} $ and that, after a suitable alteration of the grading, $ \operatorname{Cox}(X_{\Sigma}) $ is bi-graded with the bi-grading from Lemma ~\ref{L:multiGrading} (see page \pageref{L:multiGrading}).

    If $ u_{\rho_{1}},\dots,u_{\rho_{4}} $ are the following vectors in $ \mathbb{R}^{2} $:
    \begin{align*}
        u_{\rho_{1}} &= (p^{j_{2}-j_{1}}-1, p^{j_{3}-j_{1}}-1) \\
        u_{\rho_{2}} &= (-p^{j_{2}-j_{1}},-p^{j_{3}-j_{1}}) \\
        u_{\rho_{3}} &= (1,0) \\
        u_{\rho_{4}} &= (0,1),
    \end{align*}
    then the rays $ \rho_{i} = \mathbb{R}_{+} u_{\rho_{i}} $ and the faces of the two dimensional, strongly convex, rational, polyhedral cones $ \sigma_{1},\dots,\sigma_{4} $ below:
    \begin{align*}
        \sigma_{1} &= \operatorname{Cone}(u_{\rho_{1}}, u_{\rho_{4}}) \\
        \sigma_{2} &= \operatorname{Cone}(u_{\rho_{1}}, u_{\rho_{3}}) \\
        \sigma_{3} &= \operatorname{Cone}(u_{\rho_{2}}, u_{\rho_{3}}) \\
        \sigma_{4} &= \operatorname{Cone}(u_{\rho_{2}}, u_{\rho_{4}}),
    \end{align*}
    determine a fan $ \Sigma $.  This fan is the fan of the toric variety $ X_{\Sigma} $ from Proposition ~\ref{P:specificToricVariety} (see page \pageref{P:specificToricVariety}).  We claim that the toric variety $ X_{\Sigma} $ obtained from the fan $ \Sigma $ is isomorphic to $ \left(\mathbb{A}^{4}_{k} \setminus \widehat{Z}\right)//\mathbb{G}_{m}^{2} $, that
    \begin{align}
        \operatorname{bideg}(D_{\rho_{1}})&=(0,1), \notag \\
        \operatorname{bideg}(D_{\rho_{2}})&=(1,1), \notag \\
        \operatorname{bideg}(D_{\rho_{3}})&=(p^{j_{2}-j_{1}},1), \notag \\
        \operatorname{bideg}(D_{\rho_{4}})&=(p^{j_{3}-j_{2}},1), \label{E:10}
    \end{align}
    and that
    \begin{align*}
        X_{\Sigma} & \cong \left(\mathbb{A}^{4}_{k} \setminus Z(\Sigma)\right)//\mathbb{G}_{m}^{2} \\
        & =\left(\mathbb{A}^{4}_{k} \setminus \widehat{Z} \right)//\mathbb{G}_{m}^{2}.
    \end{align*}
    If $ B $ and $ C $ are the matrices below:
    \begin{align*}
        B &= \left(
        \begin{matrix}
            p^{j_{2}-j_{1}}-1 & -p^{j_{2}-j_{1}} & 1 & 0 \\
            p^{j_{3}-j_{1}}-1 & -p^{j_{3}-j_{1}} & 0 & 1
        \end{matrix}
        \right) \\
        C &= \left(
        \begin{matrix}
            0 & 1 \\
            1 & 1 \\
            p^{j_{2}-j_{1}} & 1 \\
            p^{j_{3}-j_{1}} & 1
        \end{matrix}
        \right),
    \end{align*}
    then the following sequence is exact:
    \begin{equation*}
    \xymatrix{
        0 \ar[r] & \mathbb{Z}^{2} \ar[r]^{B} & \mathbb{Z}^{4} \ar[r]^{C} & \mathbb{Z}^{2} \ar[r] & 0
        }.
    \end{equation*}
    Therefore the next sequence is exact:
    \begin{equation*}
    \xymatrix{
        0 \ar[r] & M \ar[r]^{B} & \oplus_{\rho \in \Sigma(1)} \mathbb{Z} D_{\rho} \ar[r]^{C} & \operatorname{Cl}(X_{\Sigma}) \ar[r] & 0
        }.
    \end{equation*}
    As a result, bi-grading of $ \operatorname{Cox}(X_{\Sigma}) $ is exactly that of ~\eqref{E:10}.  Also,
    \begin{align*}
        Z(\Sigma) &= \mathcal{V}(\langle y_{1},y_{2} \rangle) \cup \mathcal{V}(\langle y_{3},y_{4} \rangle) \\
        &= \widehat{Z}.
    \end{align*}
    So $ X_{\Sigma} $ is isomorphic to the variety $ \left(\mathbb{A}^{4}_{k} \setminus \widehat{Z}\right)//\mathbb{G}_{m}^{2} $ and the bi-grading of the Cox ring of $ X_{\Sigma} $ is the one from ~\eqref{E:10}.  We may endow the Cox ring of $ X_{\Sigma} $ with the bi-grading from Lemma ~\ref{L:multiGrading} (see page \pageref{L:multiGrading}) by setting an element of bi-degree $ (a,b) $ to be of bi-degree $ (p^{j_{1}}a,b) $.  Let $ \mathfrak{p} $ be the ideal below:
    \begin{equation*}
        \mathfrak{p}= \langle y_{2}^{p^{j_{2}-j_{1}}}-y_{1}^{p^{j_{2}-j_{1}}-1}y_{3},y_{2}^{p^{j_{3}-j_{1}}}-y_{1}^{p^{j_{3}-j_{1}}-1}y_{4},
        y_{3}^{p^{j_{3}-j_{2}}}-y_{1}^{p^{j_{3}-j_{2}}-1}y_{4} \rangle.
    \end{equation*}
    The variety $ \mathcal{V}(\mathfrak{p}) $ is the point $ (1,1,1,1) $ in terms of the homogeneous coordinates of $ X_{\Sigma} $ (see \cite[Chapter 5, Homogeneous Coordinates on Toric Varieties, Section 2, The Total Coordinate Ring]{CoxLittleSchenck}).  With the current isomorphism of the torus of $ X_{\Sigma} $, the point $ \mathcal{V}(\mathfrak{p}) $ is the identity $ e $ of the torus.  Therefore, $ X_{\Sigma} $ is smooth at $ \mathcal{V}(\mathfrak{p}) = e $.  Let us denote $ \operatorname{Bl}_{e}(X_{\Sigma}) $ by $ W $.  Since $ X_{\Sigma} $ is smooth at $ e $ and isomorphic to $ W $ outside the exceptional divisor, $ W $ is normal.  Since the exceptional divisor is Cartier and $ X_{\Sigma} $ is $ \mathbb{Q} $-factorial, $ W $ is also $ \mathbb{Q} $-factorial.  Therefore, the Cox ring of $ W $ is a well defined object.  By Theorem ~\ref{T:CoxRingStart} (see page \pageref{T:CoxRingStart}), the Cox ring of $ W $ is finitely generated if and only if $ k[X]^{\mathbb{G}_{a}} $ is as well.  Moreover, $ x_{1} $ is not transcendental over $ \operatorname{Cox}(W) $.  So the Cox ring of $ W $ is isomorphic to $ k[X]^{\mathbb{G}_{a}} $ by Theorem ~\ref{T:CoxRingStart}.

    For a surface, the Mori cone is equal to the pseudo-effective cone of divisors.  We claim that the Mori cone of $ W $ is not a rational polyhedral cone.  Because the Mori cone of $ W $ is the support (see Definition ~\ref{D:support} on page \pageref{D:support}) of the Cox ring of $ W $, the Cox ring of $ W $ is not a finitely generated $ k $-algebra if the Mori cone is not a rational polyhedral cone by Lemma ~\ref{L:finGenSupp} (see page \pageref{L:finGenSupp}).  If the Cox ring of $ W $ is not finitely generated as a $ k $-algebra, then $ k[X]^{\mathbb{G}_{a}} $ is not a finitely generated $ k $-algebra either by Theorem \ref{T:CoxRingStart} (see page \pageref{T:CoxRingStart}).  We claim that the Mori cone of $ W $ contains a non-rational, extremal ray.  Hence $ \overline{NE}(W) $ is not a rational, polyhedral cone.

    Suppose that $ c,d $ are natural numbers such that
    \begin{equation*}
        p^{-(j_{3}+j_{2}-2j_{1})/2}-d/c>0.
    \end{equation*}
    We aim to show that $ cD_{\rho_{2}}-dE $ is in the pseudo-effective cone for all such $ c,d $.

    Since $ j_{3}+j_{2}-2j_{1}>2 $,
    \begin{align*}
        c/d &> p^{(j_{3}+j_{2}-2j_{1})/2} \\
        &>p.
    \end{align*}
    By Proposition ~\ref{P:specificToricVariety} (see page \pageref{P:specificToricVariety})
    \begin{align*}
        (cD_{\rho_{2}}-dE)^{2}/d^{2} &= (c/d)^{2}p^{-(j_{3}+j_{2}-2j_{1})/2}-1 \\
        &>(c/d)p^{-(j_{3}+j_{2}-2j_{1})/2}-1 \\
        &>0.
    \end{align*}
    By ~\eqref{E:60} the divisor $ A_{n} $ equal to $ n D_{\rho_{2}}-D_{\rho_{1}} $ is an ample divisor of $ X_{\Sigma} $ for all natural numbers $ n $.

    For any toric variety $ X_{\Sigma} $, there is a fan $ \Sigma_{1} $, which is a refinement of $ \Sigma $, such that the natural morphism $ \pi: X_{\Sigma_{1}} \to X_{\Sigma} $ is a resolution of singularities.  If $ \pi_{1}: \operatorname{Bl}_{e}(X_{\Sigma_{1}}) \to W $ is the natural, projection morphism, then $ \pi_{1} $ is a resolution of the singularities of $ W $ because $ X_{\Sigma} $ is smooth at $ e $.  If $ E_{1} $ is the exceptional divisor of $ \pi_{1} $, and $ n_{0} $ is large enough so that $ A_{n_{0}} $ is ample, then there exist integers $ s,r $ such that $ \pi^{\ast}(A_{n_{0}})-sE-rE_{1} $ is an ample divisor of $ \operatorname{Bl}_{e}(X_{\Sigma_{1}}) $.  Note that if $ s,r $ work for $ n_{0} $, then $ s,r $ work for all $ n>n_{0} $.  This is because $ \pi^{\ast}((n-n_{0})D_{\rho_{2}}) $ is nef and the sum of a nef and ample divisor is ample.  If $ n>(sdp^{j_{3}+j_{2}-2j_{1}})/c $, then
    \begin{align*}
        \left(\pi_{1}^{\ast}(cD_{\rho_{2}}-dE)\right)^{2} &= (cD_{\rho_{2}}-dE)^{2} \\
        &>0
    \end{align*}
    and
    \begin{multline*}
        ((\pi \circ \pi_{1})^{\ast}(A_{n})-s\pi_{1}^{\ast}(E)-rE_{1})\cdot \pi_{1}^{\ast}(cD_{\rho_{2}}-dE) \\
        =(\pi^{\ast}(A_{n})-sE) \cdot (cD_{\rho_{2}}-dE) \\
        = (nD_{\rho_{2}}-D_{\rho_{1}}-sE) \cdot (cD_{\rho_{2}}-dE) \\
        = ncp^{-(j_{3}+j_{2}-2j_{1})}-sd >0.
    \end{multline*}

    By \cite[V, Surfaces, Section 1, Geometry on a Surface, Corollary 1.8]{HartshorneAG} if $ A $ is an ample divisor on a non-singular surface $ Z $, and $ D $ is a divisor such that $ D \cdot A>0 $ and $ D^{2}>0 $, then for all $ r>>0 $, $ rD $ is linearly equivalent to an effective divisor.  This result shows that $ r(\pi_{1}^{\ast}(cD_{\rho_{2}}-dE)) $ is linearly equivalent to an effective divisor for $ r>>0 $.  So $ r(cD_{\rho_{2}}-dE) $ is linearly equivalent to an effective divisor for $ r>>0 $.  Hence $ cD_{\rho_{2}}-dE $ is in the pseudo-effective cone of $ W $.

    If $ Z $ is a projective variety, $ \mathcal{L} $ is a Nef divisor on $ Z $, and $ z \in Z $ is a point, then let $ \pi: \operatorname{Bl}_{e}(Z) \to Z $ be the natural projection morphism and $ E $ the exceptional divisor on $ \operatorname{Bl}_{e}(Z) $.  The \emph{Seshadri constant} $ \epsilon(Z,\mathcal{L},z) $ is the maximum real number $ \epsilon>0 $ such that $ \pi^{\ast}(\mathcal{L})-\epsilon E $ is Nef (see \cite[Chapter 5, Local Positivity, Section 1, Seshadri Constants]{LazarsfeldI}).

    The pseudo-effective cone contains the divisor $ D_{\rho_{2}}-p^{-(j_{3}+j_{2}-2j_{1})/2}E $, because $ cD_{\rho_{2}}-dE $ is in the pseudo-effective cone for all $ c,d \in \mathbb{N} $ such that $ p^{-(j_{3}+j_{2}-2j_{1})/2}-d/c>0 $.  The Mori cone of $ W $ is equal to the pseudo-effective cone of $ W $ because $ W $ is a surface.  Therefore $ D_{\rho_{2}}-p^{-(j_{3}+j_{2}-2j_{1})/2}E $ is in the Mori cone.  The divisor $ D_{\rho_{2}}-p^{-(j_{3}+j_{2}-2j_{1})/2}E $ is self dual, so it is Nef.  The maximum that $ \epsilon(X_{\Sigma},D_{\rho_{2}},e) $ can be is $ p^{-(j_{3}+j_{2}-2j_{1})/2} $.  So because $ D_{\rho_{2}}-p^{-(j_{3}+j_{2}-2j_{1})/2}E $ is Nef, it must be that $ \epsilon(X_{\Sigma},D_{\rho_{2}},e) $ is equal to $ p^{-(j_{3}+j_{2}-2j_{1})/2} $.

    Since $ \mathbb{R}_{+}(D_{\rho_{2}}-p^{-(j_{3}+j_{2}-2j_{1})/2}E) $ is an extremal ray of the Nef cone and $ D_{\rho_{2}}-p^{-(j_{3}+j_{2}-2j_{1})/2} E $ is self dual, $ \mathbb{R}_{+}(D_{\rho_{2}}-p^{-(j_{3}+j_{2}-2j_{1})/2}E) $ is an extremal ray of the Mori cone.  The Mori cone is equal to the pseudo-effective cone, so $ \mathbb{R}_{+}(D_{\rho_{2}}-p^{-(j_{3}+j_{2}-2j_{1})/2}E) $ is an extremal ray of the pseudo-effective cone.  As a result, the pseudo-effective cone contains an irrational, extremal ray.

    Because the pseudo-effective cone is the support of $ \operatorname{Cox}(W) $ and the pseudo-effective cone contains an irrational, extremal ray, the Cox ring of $ W $ is not finitely generated as a $ k $-algebra by Lemma ~\ref{L:finGenSupp}.  The Cox ring of $ W $ is isomorphic to $ k[X]^{\mathbb{G}_{a}} $ by Theorem ~\ref{T:CoxRingStart} (see page \pageref{T:CoxRingStart}).  So $ k[X]^{\mathbb{G}_{a}} $ is not a finitely generated $ k $-algebra.
\end{proof}
\begin{cor}
    Let $ j_{1},j_{2},j_{3} $ be natural numbers such that
    \begin{align*}
        j_{3} & > j_{2} >j_{1}, \\
        j_{3}+j_{2} & \equiv 1 \mod{2}.
    \end{align*}
    If $ p $ is a prime natural number, and $ \beta $ is the action on $ \operatorname{Spec}(k[X]) \cong \mathbb{A}^{6}_{k} $ from Theorem ~\ref{T:mainResult} (see page \pageref{T:mainResult}) then
    \begin{equation*}
        \operatorname{Cox}(\operatorname{Bl}_{e}(\mathbb{P}(1:p^{j_{1}}+1:p^{j_{2}}+1:p^{j_{3}}+1)))\cong k[X]^{\mathbb{G}_{a}} 
    \end{equation*}
    and hence the Cox ring of $ \operatorname{Bl}_{e}(\mathbb{P}(1:p^{j_{1}}+1:p^{j_{2}}+1:p^{j_{3}}+1)) $ is not finitely generated.
\end{cor}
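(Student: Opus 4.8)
The plan is to observe that Theorem~\ref{T:mainResult} already presents $k[X]^{\mathbb{G}_a}$ as the symbolic Rees algebra $\bigoplus_{i\ge 0}\mathfrak{p}^{(i)}$ of the prime $\mathfrak{p}\subset A=k[y_1,y_2,y_3,y_4]$, and to reinterpret this very ring as the Cox ring of a blow-up of the asserted weighted projective space. The only genuinely new step is to produce a second toric presentation of the polynomial ring $A$ --- as the Cox ring of $\mathbb{P}(1:p^{j_1}+1:p^{j_2}+1:p^{j_3}+1)$ rather than of the surface $X_\Sigma$ --- under which $\mathfrak{p}$ is still the ideal defining the blow-up center.

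First I would pin down the weighted projective space. In Theorem~\ref{T:mainResult} the bi-grading of Lemma~\ref{L:multiGrading} gives $y_i$ the bidegree $(e_i,1)$ with $(e_1,e_2,e_3,e_4)=(0,p^{j_1},p^{j_2},p^{j_3})$. Post-composing the class map with the coordinate-sum homomorphism $\mathbb{Z}^2\to\mathbb{Z}$, $(a,b)\mapsto a+b$, collapses these to the single weights $e_i+1$, namely $1,\,p^{j_1}+1,\,p^{j_2}+1,\,p^{j_3}+1$. The quotient construction recalled in Section~\ref{S:specificTor} (equivalently \cite[Lemma 5.1.1]{CoxLittleSchenck}) then realizes $A$, carrying this coarsened $\mathbb{Z}$-grading, as the Cox ring of $\mathbb{P}(1:p^{j_1}+1:p^{j_2}+1:p^{j_3}+1)$, a projective, simplicial (hence $\mathbb{Q}$-factorial) toric variety with free class group $\mathbb{Z}$.

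Next I would check that the center survives this coarsening. A one-line degree count shows each binomial generator of $\mathfrak{p}$ is homogeneous for the new grading: for example $\deg(y_2^{p^{j_2-j_1}})=p^{j_2-j_1}(p^{j_1}+1)=p^{j_2}+p^{j_2-j_1}=\deg(y_1^{p^{j_2-j_1}-1}y_3)$, and the remaining two generators behave identically. Hence $\mathfrak{p}$ is a homogeneous prime of the weighted Cox ring, defining the center $e$ to be blown up. Crucially, the symbolic power $\mathfrak{p}^{(i)}=\mathfrak{p}^iA_{\mathfrak{p}}\cap A$ is a ring-theoretic invariant of the pair $(A,\mathfrak{p})$ alone and is insensitive to the grading, so the graded ring $\bigoplus_i\mathfrak{p}^{(i)}$ is literally the same object whether read off the $\mathbb{Z}^2$-grading of $X_\Sigma$ or the $\mathbb{Z}$-grading of $\mathbb{P}(1:p^{j_1}+1:p^{j_2}+1:p^{j_3}+1)$. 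Invoking the blow-up description of a Cox ring used in Theorem~\ref{T:CoxRingStart} and in \cite[Proposition 1.4.1.5]{ADHA} gives $\operatorname{Cox}(\operatorname{Bl}_e(\mathbb{P}(1:p^{j_1}+1:p^{j_2}+1:p^{j_3}+1)))\cong\bigoplus_i\mathfrak{p}^{(i)}\cong k[X]^{\mathbb{G}_a}$, and non-finite-generation is then immediate from Theorem~\ref{T:mainResult}.

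The main obstacle is this final identification: one must confirm that the Cox-ring-of-a-blow-up formula applies verbatim in the coarsened grading. Concretely, the locus $\mathcal{V}(\mathfrak{p})$ has codimension two in $A$, so inside the three-fold $\mathbb{P}(1:p^{j_1}+1:p^{j_2}+1:p^{j_3}+1)$ it is the one-dimensional orbit closure through the torus identity rather than a reduced point; I would therefore have to verify that blowing up this center $e$ again yields a $\mathbb{Q}$-factorial variety whose class group is free and whose Cox ring is exactly the symbolic Rees algebra $\bigoplus_i\mathfrak{p}^{(i)}$. Once this bookkeeping --- projectivity, $\mathbb{Q}$-factoriality, freeness of the class group of the blow-up, and the exact sequences underlying the symbolic-power description --- is in place, the isomorphism is purely formal, since both Cox rings are the single graded ring $\bigoplus_i\mathfrak{p}^{(i)}$ recovered from two different toric presentations of $A$.
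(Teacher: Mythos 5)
Your proposal follows the paper's own route almost step for step: the paper likewise coarsens the bi-grading to the single grading $\deg(y_{1})=1$, $\deg(r_{i}(X))=p^{j_{i}}+1$, identifies $A$ with this grading as $\operatorname{Cox}(\mathbb{P}(1:p^{j_{1}}+1:p^{j_{2}}+1:p^{j_{3}}+1))$, notes that $\oplus_{i}\mathfrak{p}^{(i)}$ does not see the change of grading, and concludes by citing Theorem~\ref{T:mainResult}. Your computation of the collapsed weights and your check that the binomial generators of $\mathfrak{p}$ remain homogeneous are both correct.

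However, the ``main obstacle'' you flag at the end is not bookkeeping; it is a genuine gap, and it is present, unacknowledged, in the paper's own proof. As you observe, $A/\mathfrak{p}$ is two-dimensional (it is isomorphic to $k[x_{3},x_{2}^{p^{j_{1}}}x_{3},x_{2}^{p^{j_{2}}}x_{3},x_{2}^{p^{j_{3}}}x_{3}]$), so $\mathfrak{p}$ has height two in the four-variable ring $A$ and $\mathcal{V}(\mathfrak{p})$ descends to a \emph{curve} in the three-fold $\mathbb{P}(1:p^{j_{1}}+1:p^{j_{2}}+1:p^{j_{3}}+1)$, namely the closure of the orbit of $e$ under a one-parameter subgroup. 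The paper's assertion that ``$\mathcal{V}(\mathfrak{p})$ is isomorphic to the identity of this weighted projective space'' is therefore not literally true, and $\operatorname{Bl}_{e}$ cannot mean the blow-up of the point $e$: the Cox ring of that blow-up would be governed by the symbolic powers of the height-three homogeneous ideal of the point, which strictly contains $\mathfrak{p}$, not by $\mathfrak{p}^{(i)}$. To obtain a correct statement one must either take the center to be the curve $\mathcal{V}(\mathfrak{p})$ and then actually prove the symbolic-Rees-algebra description of the Cox ring for a codimension-two center of a $\mathbb{Q}$-factorial three-fold --- including normality, $\mathbb{Q}$-factoriality, and freeness of the class group of the resulting blow-up, none of which follow from the surface case exploited in Theorem~\ref{T:mainResult} --- or else restate the corollary. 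So while your argument is exactly as complete as the paper's, the step you defer is the one that is actually missing, and it will not reduce to a formal identification of two presentations of the same graded ring.
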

\begin{proof}
    We see from the proof of Theorem ~\ref{T:mainResult} (see page \pageref{T:mainResult}) that if \linebreak $ r_{1}(X),r_{2}(X),r_{3}(X) $ are the functions in ~\eqref{E:89} (see page \pageref{E:89}), then $ k[X]^{\mathbb{G}_{a}}_{x_{1}} $ is isomorphic to
    \begin{equation*}
        k[x_{1}^{\pm 1},x_{2},r_{1}(X),r_{2}(X),r_{3}(X)]
    \end{equation*}
    By Theorem ~\ref{T:CoxRingStart} (see page \pageref{T:CoxRingStart}) if $ A $ is the ring $ k[x_{2},r_{1}(X),r_{2}(X),r_{3}(X)] $, and $ \mathfrak{p} $ is the ideal $ \langle x_{1} \rangle \cap A $, then
    \begin{equation*}
        k[X]^{\mathbb{G}_{a}} \cong \oplus_{i=0}^{\infty} \mathfrak{p}^{(i)}.
    \end{equation*}
    If we endow $ A $ with the grading below:
    \begin{align}
        \deg(x_{2}) &= 1, \notag \\
        \deg(r_{1}(X)) &= p^{j_{1}}+1, \notag \\
        \deg(r_{2}(X)) &= p^{j_{2}}+1, \notag \\
        \deg(r_{3}(X)) &= p^{j_{3}}+1, \label{E:90}
    \end{align}
    then
    \begin{align*}
        k[X]^{\mathbb{G}_{a}} & \cong \oplus_{i=0}^{\infty} \mathfrak{p}^{(i)} \\
        & \cong \oplus_{i,d \in \mathbb{N}_{0}} \mathfrak{p}^{(i)} \cap A_{d}.
    \end{align*}
    The ring $ A $ with the grading in ~\eqref{E:90} is isomorphic to $ \operatorname{Cox}(\mathbb{P}(1:p^{j_{1}}+1:p^{j_{2}}+1:p^{j_{3}}+1)) $.  The scheme $ \mathcal{V}(\mathfrak{p}) $ is isomorphic to the identity of this weighted projective space.  Therefore
    \begin{align*}
        k[X]^{\mathbb{G}_{a}} & \cong \oplus_{i=0}^{\infty} \mathfrak{p}^{(i)} \\
        & \cong \oplus_{i,d \in \mathbb{N}_{0}} \mathfrak{p}^{(i)} \cap A_{d} \\
        & \cong \operatorname{Cox}(\operatorname{Bl}_{e}(\mathbb{P}(1:p^{j_{1}}+1:p^{j_{2}}+1:p^{j_{3}}+1))).
    \end{align*}
    All other assertions were proved in Theorem ~\ref{T:mainResult} (see page \pageref{T:mainResult}).
\end{proof}
\section{Appendix 1: Positivity Properties.}
We will recall some material from \cite{LazarsfeldI}.  These definitions are found on \cite[Chapter 1, Ample and Nef Line Bundles, Section 1, Preliminaries: Divisors, Line Bundles, and Linear Series, Definitions 1.1.14-1.1.15]{LazarsfeldI}.
\begin{definition}
    If $ X $ is a complete variety, then two Cartier divisors $ D_{1},D_{2} $ are \emph{numerically equivalent} if $ D_{1} \cdot C = D_{2} \cdot C $ for every irreducible curve $ C \subseteq X $.  A divisor $ D $ is \emph{numerically trivial} if it is numerically equivalent to zero.  We denote the set of numerically trivial divisors of $ X $ by $ \operatorname{Num}(X) $.  The \emph{N\'{e}ron-Severi group} of $ X $ is equal to $ \operatorname{CDiv}(X)/\operatorname{Num}(X) $.  We denote the N\'{e}ron-Severi group of $ X $ by $ N^{1}(X) $.
\end{definition}
The N\'{e}ron-Severi group is a free abelian group of finite rank (see \cite[Chapter 1, Section 1, Proposition 1.1.16]{LazarsfeldI})
\begin{definition}
    If $ X $ is a complete variety, then a Cartier divisor
    \begin{align*}
        D &\in N^{1}(X)_{\mathbb{R}} \\
        &= N^{1}(X) \otimes_{\mathbb{Z}} \mathbb{R},
    \end{align*}
    is \emph{ample} if it is equal to $ \sum_{i=1}^{\ell} c_{i}A_{i} $ where $ A_{i} $ are ample Cartier divisors and $ c_{i} \in \mathbb{R}_{+} $.
\end{definition}
\begin{definition}
    If $ X $ is a complete variety, then a divisor
    \begin{align*}
        D & \in N^{1}(X)_{\mathbb{R}} \\
        &= N^{1}(X) \otimes_{\mathbb{Z}} \mathbb{R},
    \end{align*}
    is \emph{Nef} if $ D \cdot C \ge 0 $ for all irreducible curves $ C \subseteq X $.  The Nef cone is the convex cone of $ N^{1}(X)_{\mathbb{R}} $ which is generated by Nef divisors.
\end{definition}
The definition in \cite[Chapter II, Schemes, Section 7, Projective Morphisms, pg. 153]{HartshorneAG} does not, a' priori, suggest any relation with the definition of ampleness that we supplied above.  However, \cite[Chapter 1, Section 4, Nef Line Bundles and Divisors, Theorem 1.4.23]{LazarsfeldI} states that the interior of the Nef cone is the cone of ample divisors.
\begin{definition}
    Let $ X $ be a complete variety.  We denote the set of finite, $ \mathbb{R} $-linear combinations of irreducible curves $ C_{i} \subset X $ by $ Z_{1}(X)_{\mathbb{R}} $.  If $ \gamma_{1},\gamma_{2} \in Z_{1}(X)_{\mathbb{R}} $ have the property that $ D \cdot \gamma_{1} = D \cdot \gamma_{2} $ for all $ D \in \operatorname{Div}(X)_{\mathbb{R}} $, then $ \gamma_{1} $ and $ \gamma_{2} $ are \emph{numerically equivalent}.  We denote the vector space $ Z_{1}(X)_{\mathbb{R}} $ modulo numerical equivalence by $ N_{1}(X)_{\mathbb{R}} $.  The intersection pairing defines a perfect pairing $ N^{1}(X)_{\mathbb{R}} \times N_{1}(X)_{\mathbb{R}} \to \mathbb{R} $.
\end{definition}
\begin{definition}
    If $ X $ is a complete variety, then the cone $ NE(X) $ is the cone spanned by the classes all effective $ 1 $-cycles in $ N_{1}(X)_{\mathbb{R}} $, i.e., $ \sum_{i=1}^{\ell} a_{i}[C_{i}] $, where $ a_{i} \in \mathbb{R}_{+} $ and $ [C_{i}] $ is the class of an effective $ 1 $-cycle in $ N_{1}(X)_{\mathbb{R}} $.  The \emph{Mori cone} of $ X $ is the closure of $ NE(X) $.
\end{definition}
The Mori cone is the dual cone to the Nef cone, i.e.,
\begin{equation*}
    \overline{NE(X)} = \, \{ \, \, \gamma \in N_{1}(X)_{\mathbb{R}} \, \, \text{such that} \, \, \gamma \cdot D \ge 0 \, \, \text{for all} \, \, D \in \operatorname{Nef}(X) \}.
\end{equation*}
\section{Appendix 2: Cox Rings}
We recall information about Cox rings that may be found in \cite{ADHA}.  While the authors of this book work in characteristic zero, as you cannot guarantee that the Cox ring will be integral domains in positive characteristic, we will not need to do so for the purposes of this paper.  In this paper, if we examine the Cox ring of a projective variety, then that projective variety will have a free class group.  By \cite[Chapter 1, Basic Concepts, Section 3, Divisorial Algebras, Subsection 1, Sheaves of Divisorial Algebras, Remark 1.3.1.4]{ADHA} the Cox ring of these varieties are integral domains.
\begin{definition}
    A normal variety $ X $ is \emph{$ \mathbb{Q} $-factorial} if for every Weil divisor $ D \in \operatorname{Cl}(X) $, there is a natural number $ \ell \in \mathbb{N} $ such that $ \ell D $ is Cartier, i.e., every Weil divisor is $ \mathbb{Q} $-Cartier.
\end{definition}
\begin{definition}
    Let $ X $ be a normal, $ \mathbb{Q} $-factorial, projective variety with a finitely generated, free class group.  The effective cone $ \operatorname{Eff}(X) $ is the cone spanned by all effective Weil divisors in $ N^{1}(X)_{\mathbb{R}} $.  The \emph{pseudo-effective cone} is the closure of the effective cone in $ N^{1}(X)_{\mathbb{R}} $.

    The \emph{Cox sheaf} is the sheaf $ \oplus_{[D] \in \operatorname{Cl}(X)} \mathcal{O}_{X}(D) $, where $ [D] $ denotes the class of a Weil divisor $ D $ in the class group of $ X $.  The \emph{Cox ring} is the ring of global sections of the Cox sheaf.  
\end{definition}
The Cox sheaf may be endowed with the structure of a sheaf of rings as follows.  Since the class group is free, there exists a section of the quotient map $ \pi: \operatorname{WDiv}(X) \to \operatorname{Cl}(X) $.  Choose such a section.  If $ s_{1} \in H^{0}(U, \mathcal{O}_{X}(D_{1})) $ and $ s_{2} \in H^{0}(U, \mathcal{O}_{X}(D_{2})) $, then $ s_{1} \cdot s_{2} $ is mapped to the corresponding element of $ H^{0}(U, \mathcal{O}_{X}(D_{1}+D_{2})) $.  This appears at first to depend on the choice of section of $ \pi: \operatorname{WDiv}(X) \to \operatorname{Cl}(X) $.  In the Lemma below we show that the Cox sheaf itself is independent of the choice of section.

We summarize the proof that this construction of the Cox ring and Cox sheaf does not depend on the section from the Class group to the group of Weil divisors.  This proof may be found in \cite{ADHA}.
\begin{lem}
    The construction of the Cox sheaf of a variety with a free class group is independent of the choice of a section of the homomorphism $ \pi: \operatorname{WDiv} \to \operatorname{Cl}(X) $.
\end{lem}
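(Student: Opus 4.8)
The plan is to produce, for any two sections $s$ and $s'$ of $\pi$, an explicit isomorphism between the two resulting Cox sheaves that respects the multiplicative structure, thereby showing the construction is independent of the section up to isomorphism of sheaves of $\mathcal{O}_X$-algebras. Since $\operatorname{Cl}(X)$ is free, both sections may be taken to be group homomorphisms, and the difference $\delta = s' - s$ is then a group homomorphism $\operatorname{Cl}(X) \to \operatorname{WDiv}(X)$ whose image lies in the subgroup of principal divisors, because $\pi \circ \delta = \pi \circ s' - \pi \circ s = 0$.

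First I would fix a basis $c_1,\dots,c_r$ of $\operatorname{Cl}(X)$ and choose rational functions $f_1,\dots,f_r \in k(X)^{\ast}$ with $\operatorname{div}(f_i) = \delta(c_i)$. Extending multiplicatively by setting $f_c = \prod_i f_i^{n_i}$ for $c = \sum_i n_i c_i$ produces a group homomorphism $c \mapsto f_c$ from $\operatorname{Cl}(X)$ to $k(X)^{\ast}$ satisfying $\operatorname{div}(f_c) = s'(c) - s(c)$ for every class $c$, and in particular $f_{c+c'} = f_c f_{c'}$. Next I would define, on the degree-$c$ graded component, the map given by multiplication by $f_c^{-1}$. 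Since $g \in \mathcal{O}_X(s(c))$ means $\operatorname{div}(g) + s(c) \ge 0$, and $s(c) = s'(c) - \operatorname{div}(f_c)$, we get $\operatorname{div}(f_c^{-1} g) + s'(c) \ge 0$, so $f_c^{-1} g \in \mathcal{O}_X(s'(c))$; this yields a sheaf isomorphism $\mathcal{O}_X(s(c)) \to \mathcal{O}_X(s'(c))$ with inverse multiplication by $f_c$. Taking the direct sum over all classes $c$ gives an $\mathcal{O}_X$-linear isomorphism $\Phi$ of the two underlying Cox sheaves.

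The heart of the matter, which I expect to be the only real content, is checking that $\Phi$ is a morphism of sheaves of rings. Given $g_1 \in \mathcal{O}_X(s(c_1))$ and $g_2 \in \mathcal{O}_X(s(c_2))$, their product in the $s$-Cox ring is $g_1 g_2 \in \mathcal{O}_X(s(c_1 + c_2))$, which $\Phi$ sends to $f_{c_1+c_2}^{-1} g_1 g_2$; on the other hand the product of $\Phi(g_1) = f_{c_1}^{-1} g_1$ and $\Phi(g_2) = f_{c_2}^{-1} g_2$ in the $s'$-Cox ring is $f_{c_1}^{-1} f_{c_2}^{-1} g_1 g_2$. These agree precisely because $f_{c_1} f_{c_2} = f_{c_1 + c_2}$, which is exactly the multiplicativity secured in the previous step. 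This is where freeness of $\operatorname{Cl}(X)$ is essential: it is what lets the sections, and hence the family $\{f_c\}$, be chosen multiplicatively, so that the componentwise comparison maps glue into a ring isomorphism rather than merely an unrelated collection of sheaf isomorphisms. Everything else is routine bookkeeping. Finally I would note that $\Phi$ is canonical only up to the global units of $X$ — a character $\operatorname{Cl}(X) \to k^{\ast}$ when $X$ is complete — which is why the statement asserts independence up to isomorphism rather than literal equality.
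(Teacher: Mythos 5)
Your proof is correct and follows essentially the same route as the paper's: choose rational functions $f_i$ realizing the difference of the two sections on a basis of the free class group, extend multiplicatively, and compare graded components by multiplication by these functions. You are in fact somewhat more careful than the paper, which asserts the resulting map is an isomorphism of sheaves without explicitly verifying compatibility with the ring structure --- the check that $f_{c_1}f_{c_2}=f_{c_1+c_2}$ makes $\Phi$ a ring homomorphism, which you carry out, is the point the paper leaves implicit.
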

\begin{proof}
    We should remark that because $ X $ is regular in codimension one, and $ \mathbb{Q} $-factorial, $ \mathcal{O}_{X}(D_{1}) \mathcal{O}_{X}(D_{2}) = \mathcal{O}_{X}(D_{1}+D_{2}) $.

    Because the class group of $ X $ is free, there is an $ \ell \in \mathbb{N} $ such that the class group of $ X $ is isomorphic to $ \mathbb{Z}^{\ell} $.  To define a section of the group homomorphism $ \pi $ one needs to find a subset $ K $ of $ \operatorname{WDiv}(X) $ generated by $ \ell $ divisors $ D_{1},\dots,D_{\ell} $ such that $ \pi \mid_{K} $ maps surjectively onto $ \operatorname{Cl}(X) $.

    If $ K $ and $ K_{1} $ are subsets of $ \operatorname{WDiv}(X) $ such that both $ K $ and $ K_{1} $ map surjectively onto $ \operatorname{Cl}(X) $, then let $ \mathcal{R} $ be the sheaf of rings $ \oplus_{D \in K} \mathcal{O}_{X}(D) $ and $ \mathcal{R}_{1} $ be the sheaf of rings $ \oplus_{D \in K_{1}} \mathcal{O}_{X}(D) $.

    If $ D_{1},\dots,D_{\ell} $ are the elements of $ K $ and $ E_{1},\dots,E_{\ell} $ are the elements of $ K_{1} $, then there are elements $ f_{i} $ such that $ D_{i}-\operatorname{div}(f_{i}) = E_{i} $.   Let us define a homomorphism $ \eta $ from $ K $ to the field of fractions of $ X $ via
    \begin{equation*}
        \eta(\sum_{i=1}^{\ell} a_{i}D_{i}) = \prod_{i=1}^{\ell} f_{i}^{a_{i}}.
    \end{equation*}

    A divisor $ D $ equal to $ \sum_{i=1}^{\ell}a_{i}D_{i} $ has the property that the class of $ D $ is equal to the class of $ D-\operatorname{div}(\eta(D)) $ in the class group of $ X $.  The divisor $ D-\eta(D) $ may be written in terms of the generators of $ K_{1} $.  As a result, we may map an element $ f \in \Gamma(U, \mathcal{O}_{X}(D)) \subset \mathcal{R}(U) $ to the element $ \eta(D)f \in \mathcal{R}_{1}(U) $.  This is an isomorphism of sheaves.  As a result, the Cox sheaf and Cox ring are well defined.
\end{proof}
\begin{definition} \label{D:support}
    Let $ B $ be a $ \mathbb{Z}^{\ell} $-graded ring.  The \emph{support} of $ B $ is the semigroup generated by $ (i_{1},\dots,i_{\ell}) \in \mathbb{Z}^{\ell} $ such that $ B_{I} \ne 0 $.
\end{definition}
\begin{lem} \label{L:finGenSupp}
    If a $ \mathbb{Z}^{\ell} $-graded ring $ B $ is an integrally closed, finitely generated $ B_{(0,\dots,0)} $-algebra, then the support of $ B $ (see Definition ~\ref{D:support} on page \pageref{D:support}) is a finitely generated semigroup, i.e, every element is the integral, linear combination of its generators.  Moreover, there is a rational, polyhedral cone $ \sigma $ such that $ \sigma \cap \mathbb{Z}^{\ell} $ is equal to the support of $ B $.
\end{lem}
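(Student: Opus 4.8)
The plan is to reduce the statement to a fact about the affine monoid of degrees that actually occur in $B$, and then read the cone off of that monoid. First I would replace the given $B_0$-algebra generators of $B$, where $B_0 := B_{(0,\dots,0)}$, by homogeneous ones: decomposing each generator into its $\mathbb{Z}^{\ell}$-homogeneous components and collecting all of the components produces a finite set $b_1,\dots,b_m$ of homogeneous elements, of degrees $d_1,\dots,d_m \in \mathbb{Z}^{\ell}$, that still generate $B$ as a $B_0$-algebra. Since $B$ is integrally closed it is reduced, and in the situation of this paper (a Cox ring of a variety with free class group, by the remark of Appendix 2) it is in fact a domain; I would use this to conclude that every monomial $b^{a} = b_1^{a_1}\cdots b_m^{a_m}$ with $a \in \mathbb{N}_0^{m}$ is nonzero. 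As $b^{a}$ is homogeneous of degree $\sum_i a_i d_i$ and $B_I = \sum_{\,\sum_i a_i d_i = I} B_0\, b^{a}$, this shows that $B_I \neq 0$ holds exactly when $I$ belongs to the monoid $\langle d_1,\dots,d_m\rangle$ generated by the $d_i$.

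Thus the set $\{\,I : B_I \neq 0\,\}$, and hence the support $S$ it generates, is precisely the finitely generated affine semigroup $\langle d_1,\dots,d_m\rangle$, which establishes the first assertion. Taking the cone over these finitely many generators then yields a rational polyhedral cone $\sigma = \operatorname{Cone}(d_1,\dots,d_m) = \operatorname{Cone}(S)$, with $S \subseteq \sigma \cap \mathbb{Z}^{\ell}$. This much is already the content that the main theorem consumes: if the support spanned a pseudo-effective cone possessing an irrational extremal ray, then $\operatorname{Cone}(S)$ could not be rational polyhedral, contradicting finite generation of $B$; so for the application it is really the rational polyhedrality of $\operatorname{Cone}(S)$ that does the work.

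The remaining, sharper claim that $\sigma \cap \mathbb{Z}^{\ell}$ equals $S$ on the nose amounts to saturation of the semigroup $S$, and this is the step that genuinely calls on integral closure and that I expect to be the main obstacle. The mechanism I would try is valuative: given a lattice point $I$ in $\sigma$ that lies in the subgroup $L = \mathbb{Z} S$ generated by the support, some positive multiple $mI$ lies in $S$, so one can try to produce a nonzero homogeneous $\theta \in \operatorname{Frac}(B)$ of degree $I$ a power of which lands in $B$; then $\theta$ would be integral over $B$, hence $\theta \in B$ since $B = \bigcap_{\operatorname{ht}\mathfrak p = 1} B_{\mathfrak p}$ is integrally closed, giving $B_I \neq 0$ and $I \in S$. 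The delicate points, which I regard as the crux, are that the argument can only run inside $L$ rather than all of $\mathbb{Z}^{\ell}$, and that one must exhibit an honest integral equation for a degree-$I$ element rather than merely a root in a radical extension of $\operatorname{Frac}(B)$; finite generation alone does not force saturation, so it is exactly here that normality of $B$ must be exploited in full, and where I would concentrate the careful work.
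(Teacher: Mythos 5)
Your treatment of the first assertion is essentially the paper's own argument: replace the given generators by their homogeneous components $b_{1},\dots,b_{m}$ of degrees $d_{1},\dots,d_{m}$, observe that every nonzero homogeneous element has degree in the monoid $\mathbb{N}_{0}d_{1}+\cdots+\mathbb{N}_{0}d_{m}$, and conclude that the support equals this finitely generated semigroup and spans the rational polyhedral cone $\operatorname{Cone}(d_{1},\dots,d_{m})$. (The paper does not even bother with your converse inclusion via nonvanishing of the monomials $b^{a}$, since the support is \emph{defined} as the semigroup generated by the occurring degrees; your version is slightly more precise.) You are also right that this is the only part of the lemma the main theorem consumes: the contradiction there is between an irrational extremal ray of the pseudo-effective cone and the rational polyhedrality of $\operatorname{Cone}(S)$, and saturation plays no role.

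On the second assertion you stop short of a proof, and your hesitation is well placed: the claim $\sigma\cap\mathbb{Z}^{\ell}=S$ is false as stated. Take $B=k[x,y]$ with $B_{0}=k$, $\deg(x)=2$, $\deg(y)=3$ in a $\mathbb{Z}$-grading; this is a normal, finitely generated $B_{0}$-algebra whose support is the numerical semigroup $\{0,2,3,4,\dots\}$, yet every cone containing $2$ and $3$ satisfies $\sigma\cap\mathbb{Z}\supseteq\mathbb{N}_{0}\ni 1$. So normality does not force saturation of the degree monoid, and the obstacle you isolate (one can at best work inside the sublattice $L=\mathbb{Z}S$, and even there one must actually produce a homogeneous element of the given degree, not merely a root in a radical extension) is genuine rather than a technicality you failed to dispatch. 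For comparison, the paper's own proof of this step is the single sentence that since $\operatorname{Spec}(B)$ is normal it equals the toric variety $\operatorname{Spec}(k[\sigma\cap\mathbb{Z}^{\ell}])$, which already fails for the example above ($\mathbb{A}^{2}_{k}$ versus $\mathbb{A}^{1}_{k}$) and, more basically, for any $B$ whose graded pieces are not one-dimensional over $B_{(0,\dots,0)}$ --- Cox rings are not monoid algebras. So your proposal proves exactly the part of the lemma that is both true and needed, and correctly flags the part that neither you nor the paper actually establishes; the honest fix is to weaken the second assertion to the statement that $\operatorname{Cone}(S)$ is rational polyhedral, or to assert saturation only relative to $\mathbb{Z}S$ after supplying the missing integrality argument.
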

\begin{proof}
    Because $ B $ is finitely generated, it is generated by a finite number of elements $ \{y_{1},\dots,y_{n}\} $.  Each element $ y_{i} $ is the sum of its homogeneous parts.  Therefore, there are homogeneous elements $ y_{i,I} $ (with respect to the $ \mathbb{Z}^{\ell} $-grading) such that $ y_{i} $ is equal to $ \sum_{I} y_{i,I} $.  Because $ y_{i,I} \in B $,
    \begin{equation*}
        B_{(0,\dots,0)}[y_{1},\dots,y_{n}] \cong B_{(0,\dots,0)}[\{y_{i,I}\}].
    \end{equation*}
    Therefore, we may assume that there are elements $ y_{1},\dots,y_{n} $ such that the multi-degree of $ y_{i} $ is $ J_{i} $ and $ B $ is equal to $ B_{(0,\dots,0)}[y_{1},\dots,y_{n}] $ as a $ B_{(0,\dots,0)} $ algebra.  We claim that the support of $ B $ is equal to $ \left(\mathbb{R}_{+} J_{1}+\dots+\mathbb{R}_{+} J_{n}\right) \cap \mathbb{Z}^{\ell} $.

    If $ x $ is an element of $ B $, then $ x $ is equal to $ \sum_{I} r_{I}(y_{1},\dots,y_{n}) $ where \linebreak $ r_{I}(y_{1},\dots,y_{n}) $ is homogeneous of multi-degree $ I $.  If we can show that $ I $ is the integral, linear combination of $ J_{1},\dots,J_{n} $ for any $ I $ such that $ r_{I}(y_{1},\dots,y_{n}) $ is non-zero, then it will be clear that $ \left(\mathbb{N}_{0}J_{1}+\dots+\mathbb{N}_{0}J_{n}\right) \cap \mathbb{Z}^{\ell} $ is equal to the support of $ B $.  Because $ r_{I}(y_{1},\dots,y_{n}) $ is a homogeneous polynomial with respect to the multi-grading, its multi-degree is equal to the multi-degree of any monomial $ \prod_{i=1}^{n} y_{i}^{s_{i}} $ with a non-zero coefficient in the expansion of $ r_{I}(y_{1},\dots,y_{n}) $.  If $ \prod_{i=1}^{n} y_{i}^{s_{i}} $ is one such monomial, then $ I $ is equal to $ \sum_{i=1}^{n} s_{i} J_{i} $.  As a result, $ I $ is the integral, linear combination of $ J_{1},\dots,J_{n} $ if $ r_{I}(y_{1},\dots,y_{n}) $ is non-zero.
    
    Let $ \sigma $ equal $ \mathbb{R}_{+}J_{1}+\dots \mathbb{R}_{+}J_{n} $, or in the language of toric varieties $ \operatorname{Cone}(J_{1},\dots,J_{n}) $.  Since $ \operatorname{Spec}(B) $ is normal, it is equal to the toric variety $ \operatorname{Spec}(k[\sigma \cap \mathbb{Z}^{\ell}]) $.  So the cone $ \sigma $ is saturated and $ \sigma \cap \mathbb{Z}^{\ell} = \mathbb{N}_{0} J_{1}+\cdots +\mathbb{N}_{0}J_{n} $.
\end{proof}
The support of the Cox ring of a variety $ Z $ with free class group is the pseudo-effective cone.
\section{Appendix 3: Toric Varieties.} \label{S:ToricVarieties}
We recall the following material from ~\cite{CoxLittleSchenck} with only minor changes in notation.  This is not intended to be a complete introduction to the subject.
\begin{definition}
    A \emph{toric variety} is a variety $ X $ such that there is an equivariant embedding of a torus $ T_{N} \cong \mathbb{G}_{m}^{n} $ as a Zariski open subset of $ X $.
\end{definition}
We would like to emphasize that many of the constructions we shall detail depend on a specific identification of the torus.  This identification is not a canonical one.  Namely if one acts by an automorphism of the underlying toric variety, then the identity of the torus, the points of the torus, and many other pieces of data that we have yet to talk about change with respect to the underlying automorphism.

Two important pieces of data associated to a torus are the \emph{lattice of characters} (denoted $ M $) of the torus and \emph{the lattice of one parameter sub-groups} (denoted $ N $).  A character of a linear algebraic group $ G $ is a morphism of linear algebraic groups $ \chi: G \to \mathbb{G}_{m} $.  If $ G $ is the group $ \mathbb{G}_{m}^{n} $, then a character $ \chi: \mathbb{G}_{m}^{n} \to \mathbb{G}_{m} $ maps $ (z_{1},\dots,z_{n}) $ to $ \prod_{i=1}^{n} z_{i}^{a_{i}} $.  As a result, this character is determined by
\begin{align*}
     (a_{1},\dots,a_{n}) & \in \mathbb{Z}^{n} \\
       & = M.
\end{align*}
A one parameter sub-group of a linear algebraic group $ G $ is a morphism of linear algebraic groups $ \lambda: \mathbb{G}_{m} \to G $.  If $ G $ is an $ n $-dimensional torus $ \mathbb{G}_{m}^{n} $, then $ \lambda(z) $ is equal to $ (z^{b_{1}},\dots,z^{b_{n}}) $.  So, the one parameter sub-group $ \lambda $ is determined by
\begin{align*}
    (b_{1},\dots,b_{n}) &\in \mathbb{Z}^{n} \\
    & = N.
\end{align*}
If $ \lambda^{u} $ is a one parameter sub-group determined by $ (u_{1},\dots,u_{n}) \in \mathbb{Z}^{n} $ and $ \chi^{m} $ is a character determined by $ (m_{1},\dots,m_{n}) \in \mathbb{Z}^{n} $, then
\begin{align*}
    \chi^{m} \circ \lambda^{u}(z) &= \chi^{m}(z^{u_{1}},\dots,z^{u_{n}}) \\
    &= \prod_{i=1}^{n} z^{m_{i}u_{i}} \\
    &= z^{\sum_{i=1}^{n} m_{i}u_{i}} \\
    &= z^{\langle m,u \rangle},
\end{align*}
where $ \langle \cdot,\cdot \rangle $ is the standard inner product on $ \mathbb{R}^{n} $.  The standard inner product maps $ M \times N $ to $ \mathbb{Z} $ and identifies $ M $ with $ \operatorname{Hom}_{\mathbb{Z}}(N, \mathbb{Z}) $ and $ N $ with $ \operatorname{Hom}_{\mathbb{Z}}(M,\mathbb{Z}) $.

This identification places $ M $ within
\begin{align*}
     M_{\mathbb{R}} &= (M \otimes_{\mathbb{Z}} \mathbb{R}) \\
     & \cong \mathbb{R}^{n},
\end{align*}
and $ N $ within
\begin{align*}
     N_{\mathbb{R}} &= (N \otimes_{\mathbb{Z}} \mathbb{R}) \\
     & \cong \mathbb{R}^{n}.
\end{align*}
If $ S $ is a finite sub-set of $ N_{\mathbb{R}} $ (respectively $ M_{\mathbb{R}} $), then a \emph{convex polyhedral cone} $ \sigma $ is a set $ \sum_{u \in S} \mathbb{R}_{+}u $, which we shall denote by $ \operatorname{Cone}(S) $.  If $ m \in M_{\mathbb{R}} $, then a \emph{hyperplane} $ H_{m} $ is the set of $ u \in N_{\mathbb{R}} $ such that $ \langle m, u \rangle = 0 $.  A convex, polyhedral cone $ \sigma $ is \emph{rational} if it is possible to find generators $ u_{1},\dots,u_{\ell} \in N $ such that $ \sigma $ is equal to $ \operatorname{Cone}(u_{1},\dots,u_{\ell}) $.  A \emph{face} of a convex, polyhedral cone $ \sigma $ is a cone equal to $ \sigma \cap H_{m} $ for $ m \in M_{\mathbb{R}} $.  A polyhedral cone is \emph{strongly convex} if the origin is a face.

If $ \sigma \subseteq N_{\mathbb{R}} $ is a convex, polyhedral cone, then the \emph{dual cone} $ \sigma^{\vee} $ is the set of points $ m \in M_{\mathbb{R}} $ such that $ \langle m, u \rangle \ge 0 $ for all $ u \in \sigma $.  It is also useful to denote the set of points $ m \in M_{\mathbb{R}} $ such that $ \langle m, u \rangle = 0 $ for all $ u \in \sigma $ by $ \sigma^{\perp} $.  If $ \sigma $ is a strongly convex, rational, polyhedral cone in $ N_{\mathbb{R}} $, then $ \operatorname{Spec}(k[\sigma^{\vee} \cap M]) $ is a normal, affine, toric variety.  In fact, all normal, affine, toric varieties have this form.  Many books denote this affine toric variety by $ U_{\sigma} $.

It is a standard result that closed points of an affine scheme correspond to maximal ideals of the affine coordinate ring.  In the case of a normal, affine, toric variety, these correspond to semigroup homomorphisms from $ \sigma^{\vee} \cap M $ to $ k $, where $ k $ is a semigroup under multiplication.  If $ \tau: \sigma^{\vee} \cap M \to k $ is a semigroup homomorphism, then one obtains a ring homomorphism $ \tau^{\sharp}: k[\sigma^{\vee} \cap M] \to k $, which is the extension by linearity of the map which sends $ \chi^{m} $ to $ \tau(m) $.  If $ x $ is a closed point of $ U_{\sigma} $, then one may define a semigroup homomorphism from $ \sigma^{\vee} \cap M $ to $ k $ by the map $ \tau_{x} $ which sends $ m \in \sigma^{\vee} \cap M $ to $ \chi^{m}(x) $.

If $ \sigma \subseteq N_{\mathbb{R}} $ is a strongly convex, rational, polyhedral cone, then \emph{the span of $ \sigma $} is the real vector space $ \sigma +(-\sigma) $, i.e., it is the real vector space spanned by elements of the form $ u_{1}+u_{2} $ where $ u_{1} \in \sigma $ and $ u_{2} \in -\sigma $.  The dimension of a cone $ \sigma $ is the dimension of its span.  A point $ u \in \sigma $ is in the \emph{relative interior} of a cone $ \sigma $ if it is in the interior of the span of $ \sigma $.  We denote the relative interior of $ \sigma $ by $ \operatorname{Relint}(\sigma) $.

If $ \sigma $ is a strongly convex, rational, polyhedral cone of $ N_{\mathbb{R}} $, then there is a ``distinguished point'' $ \gamma_{\sigma} $ corresponding to the semigroup homomorphism which sends a character $ m \in \sigma^{\vee} \cap M $ to $ 1 $ if $ m \in \sigma^{\perp} \cap M $ and sends it to zero if it is not in $ \sigma^{\perp} \cap M $.  We should remark why this is a semigroup homomorphism.  If $ m_{1},m_{2} \in \sigma^{\vee} \cap M $ and $ m_{1}+m_{2} \in \sigma^{\perp} \cap M $, then we claim that both $ m_{1},m_{2} \in \sigma^{\perp} \cap M $.  If $ m_{1}+m_{2} \in \sigma^{\perp} \cap M $, then
\begin{align*}
    0 &= \langle m_{1}+m_{2},u \rangle \\
    &= \langle m_{1},u \rangle +\langle m_{2},u \rangle.
\end{align*}
For all $ u \in \sigma $, the inner product of $ m_{1} $ and $ u $ is greater than or equal to zero because $ m_{1} \in \sigma^{\vee} \cap m $.  If $ \langle m_{1},u \rangle >0 $, then $ \langle m_{2},u \rangle <0 $ in order for $ \langle m_{1}+m_{2},u \rangle $ to equal zero.  However, if $ \langle m_{2},u \rangle <0 $, then this contradicts the fact that $ m_{2} \in \sigma^{\vee} \cap m $.  So if $ m_{1},m_{2} \in \sigma^{\vee} \cap M $ and $ m_{1}+m_{2} \in \sigma^{\perp} \cap M $, then $ m_{1},m_{2} \in \sigma^{\perp} \cap M $.  Therefore, the map from $ \sigma^{\vee} \cap M $ to $ k $ which sends a character of $ \sigma^{\perp} \cap M $ to $ 1 $ and all other characters of $ \sigma^{\vee} \cap M $ to zero is a well defined semigroup morphism.

The above description tells us that $ \gamma_{\sigma} $ is a well defined point of $ U_{\sigma} $.  However, what does this point look like?  If $ \sigma $ is an $ \ell $ dimensional cone, then there exist $ u_{1},\dots,u_{\ell} \in \sigma $ such that $ \{u_{1},\dots,u_{\ell}\} $ is a basis of the span of $ \sigma $.  One may extend this basis of the span of $ \sigma $ to a basis $ \{u_{1},\dots,u_{n}\} $ of $ N_{\mathbb{R}} $.  The semigroup $ \sigma^{\vee} \cap M $ is generated by $ u_{1}^{\ast},\dots,u_{\ell}^{\ast},\pm u_{\ell+1}^{\ast},\dots, \pm u_{n}^{\ast} $.  The semigroup $ \sigma^{\perp} $ is generated by $ \pm u_{\ell+1}^{\ast},\dots,\pm u_{n}^{\ast} $.  With respect to this basis, the point $ \gamma_{\sigma} $ has the form below:
\begin{align*}
    \gamma_{\sigma} &= (\gamma_{\sigma}(u_{1}^{\ast}),\dots,\gamma_{\sigma}(u_{n}^{\ast})) \\
    &=(\underbrace{0,\dots,0}_{\ell}, \underbrace{1,\dots,1}_{n-\ell}).
\end{align*}

The distinguished point of the strongly convex, rational, polyhedral cone $ \sigma $ depends on the identification of the torus of the underlying toric variety, the lattice of characters and one parameter sub-groups, and the cone $ \sigma $.  One may ask what ``distinguishes'' the distinguished point of the cone $ \sigma $.  The answer is the following.  Because $ \sigma^{\perp} \cap M $ is not just a semigroup but a group, $ \operatorname{Hom}_{\mathbb{Z}}(\sigma^{\perp} \cap M, k^{\ast}) $ is a torus.  If $ \sigma $ is equal to $ \operatorname{Cone}(u_{1},\dots,u_{\ell}) $ and we extend $ u_{1},\dots,u_{\ell} $ to a basis $ \{u_{1},\dots,u_{n}\} $ of $ N_{\mathbb{R}} $, then $ \operatorname{Hom}_{\mathbb{Z}}(\sigma^{\perp} \cap M, k^{\ast}) $ is an $ n-\ell $-dimensional torus.  If $ M $ is the lattice of characters then the torus of the underlying toric variety is identified with $ \operatorname{Hom}_{\mathbb{Z}}(M,k^{\ast}) $.  As we have said before, this identification is not canonical.  There is an $ \ell $-dimensional sub-torus of $ \operatorname{Hom}_{\mathbb{Z}}(M, k^{\ast}) $ which acts trivially on the point $ \gamma_{\sigma} $.  The torus $ \operatorname{Hom}_{\mathbb{Z}}(\sigma^{\perp}\cap M, k^{\ast}) $ is the quotient of $ \operatorname{Hom}_{\mathbb{Z}}(M,k^{\ast}) $ by this sub-torus, and $ \gamma_{\sigma} $ is the identity of $ \operatorname{Hom}_{\mathbb{Z}}(\sigma^{\perp} \cap M, k^{\ast}) $.

Is it possible to construct a normal toric variety purely from the lattices $ N $ and $ M $ and a set of cones $ \Sigma $ with certain properties?  Namely, if one had a collection of cones $ \Sigma $ contained in $ N_{\mathbb{R}} \cong \mathbb{R}^{n} $, then could one glue together the varieties $ U_{\sigma} $ and obtain a normal, toric variety provided certain conditions on the collection $ \Sigma $ made sure that the gluing was well defined?  The advantage here would be that one would not need to know that an equivariant embedding of the torus existed.  Everything could be obtained from the collection of cones $ \Sigma $ and the lattices $ N $ and $ M $.  The answer to this question is yes.
\begin{definition}
    Let $ N \cong \mathbb{Z}^{n} $ be the lattice of one parameter subgroups of a torus $ T_{N} $ and $ M $ the dual lattice of characters.  Likewise, let
    \begin{align*}
        N_{\mathbb{R}} & = N \otimes_{\mathbb{Z}} \mathbb{R} \\
        & \cong \mathbb{R}^{n},
    \end{align*}
    and
    \begin{align*}
        M_{\mathbb{R}} & = M \otimes_{\mathbb{Z}} \mathbb{R} \\
        & \cong \mathbb{R}^{n},
    \end{align*}
    be the extension of $ N $ and $ M $ to $ n $-dimensional, real, vector spaces.   A \emph{fan} $ \Sigma $ in $ N_{\mathbb{R}} $ is a finite collection of strongly convex, rational, polyhedral cones $ \sigma \subseteq N_{\mathbb{R}} $ such that
    \begin{itemize}
        \item[i)] For all $ \sigma \in \Sigma $, each face of $ \sigma $ is an element of $ \Sigma $,
        \item[ii)] For all $ \sigma_{1},\sigma_{2} \in \Sigma $, the intersection $ \sigma_{1} \cap \sigma_{2} \in \Sigma $.
    \end{itemize}
\end{definition}
The conditions in the definition of a fan show that if $ \Sigma $ is a fan in $ N_{\mathbb{R}} $, then one obtains a normal, toric variety $ X_{\Sigma} \cong \left(\coprod_{\sigma \in \Sigma} U_{\sigma} \right)/\sim $.  This allows one to obtain a toric variety purely from the lattices $ N $ and $ M $ along with the data of a fan $ \Sigma $.  We denote the torus of a toric variety $ X_{\Sigma} $ by $ T_{N} $.

One finds the following theorem in \cite[Chapter 3, Normal Toric Varieties, Section 2, The Orbit Cone Correspondence, Proposition 3.2.2]{CoxLittleSchenck}:
\begin{quote}
    Let $ \sigma \subseteq N_{\mathbb{R}} $ be a strongly convex, rational, polyhedral cone and $ u \in N $.  Then
    \begin{equation*}
        u \in \sigma \Leftrightarrow \lim_{t \to 0} \lambda^{u}(t) \quad \text{exists in } U_{\sigma}.
    \end{equation*}
    Moreover, if $ u \in \operatorname{Relint}(\sigma) $, then $ \lim_{t \to 0} \lambda^{u}(t) = \gamma_{\sigma} $
\end{quote}
If $ \sigma $ is a strongly convex, rational, polyhedral cone, then we denote the $ T_{N} $ orbit of $ \gamma_{\sigma} $ by $ O(\sigma) $.  We denote the closure of $ O(\sigma) $ in $ X_{\Sigma} $ by $ V(\sigma) $.  The varieties $ V(\sigma) $ are the only torus invariant, closed, sub-varieties of $ X_{\Sigma} $ by the Orbit-Cone correspondence, part of which is stated below (see \cite[Chapter 3, Section 2, Theorem 3.2.6 a),b)]{CoxLittleSchenck}):
\begin{quote}
    Let $ X_{\Sigma} $ be the toric variety of the fan $ \Sigma $ in $ N_{\mathbb{R}} $.  Then:
    \begin{itemize}
        \item[a)] There is a bijective correspondence
            \begin{align*}
                \{ \text{cones} \, \sigma \, \text{in} \, \Sigma \} & \leftrightarrow \{T_{N}-\text{orbits in }\, X_{\Sigma}\} \\
                \sigma & \leftrightarrow O(\sigma) \cong \operatorname{Hom}_{\mathbb{Z}}(\sigma^{\perp} \cap M, k^{\ast}).
            \end{align*}
        \item[b)] Let $ n= \dim(N_{\mathbb{R}}) $.  For each cone $ \sigma \in \Sigma $, $ \dim(O(\sigma)) = n-\dim(\sigma) $.
    \end{itemize}
\end{quote}
A consequence of the Orbit-Cone correspondence is that all $ n-\ell $-dimensional, closed, torus invariant, sub-varieties of $ X_{\Sigma} $ correspond to $ \ell $-dimensional cones of $ \Sigma $.  This correspondence functions as follows.  Given an $ \ell $-dimensional cone $ \sigma \in \Sigma $, there exists a distinguished point $ \gamma_{\sigma} $.  The orbit of this point is a locally closed, $ n-\ell $-dimensional, torus invariant, sub-variety of $ X_{\Sigma} $ which we denote by $ O(\sigma) $.  The closure of this sub-variety is an $ n-\ell $-dimensional, torus invariant, closed, sub-variety $ V(\sigma) $.  If $ N_{\sigma} $ is the sub-lattice of $ N $ whose points are contained in the span of $ \sigma $, then let $ N(\sigma) $ equal $ N/N_{\sigma} $.  In \cite[Chapter 3, Section 2, Lemma 3.2.4]{CoxLittleSchenck} Cox, Little and Schenck prove that there is a perfect pairing between $ \sigma^{\perp} \cap M $ and $ N(\sigma) $ obtained from the original pairing between $ M $ and $ N $.  With respect to this pairing
\begin{align*}
    \operatorname{Hom}_{\mathbb{Z}}(\sigma^{\perp} \cap M, k^{\ast}) & \cong N(\sigma) \otimes_{\mathbb{Z}} k^{\ast} \\
    & \cong T_{N(\sigma)}.
\end{align*}
The correspondence between cones $ \sigma \in \Sigma $ and $ T_{N} $ orbits $ O(\sigma) $ is bijective.

We will be very interested in the class group and Picard groups of toric varieties.  The orbit cone correspondence shows that divisors of a toric variety correspond to one dimensional cones of the fan $ \Sigma $.  A one dimensional cone of the fan $ \Sigma $ is called \emph{a ray}.  We denote the set of $ r $-dimensional cones by $ \Sigma(r) $.  When $ \rho \in \Sigma(1) $, then we will denote the variety $ V(\rho) $ by $ D_{\rho} $.

\begin{definition}
    A strongly convex, rational, polyhedral cone is \emph{simplicial} if its minimal generators are linearly independent over $ \mathbb{R} $.
\end{definition}
The \emph{support} of a fan $ \Sigma $ is the set $ \cup_{\sigma \in \Sigma} \sigma \subseteq N_{\mathbb{R}} $.  We denote the support of $ \Sigma $ by $  \mid \Sigma \mid $.  A normal toric variety is complete if and only if its support is equal to $ N_{\mathbb{R}} $.  We give a different proof than the one listed in \cite{CoxLittleSchenck} that does not depend on the characteristic of the base field.  This proof is very similar to the one in \cite[Chapter 2, Singularities and Compactness, Section 4, Compactness and Properness]{Fulton}.
\begin{prop} \label{P:toricComplete}
    If $ X_{\Sigma} $ is a normal, toric variety over a field $ k $ of arbitrary characteristic, then $ X_{\Sigma} $ is complete if and only if the support of $ \Sigma $ is all of $ N_{\mathbb{R}} $.
\end{prop}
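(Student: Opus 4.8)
The plan is to prove both directions through the valuative criterion of properness, which is available in arbitrary characteristic and is precisely what lets this argument avoid any appeal to the base field. Since $X_{\Sigma}$ is of finite type and the fan axioms guarantee that it is separated, $X_{\Sigma}$ is complete if and only if the structure morphism $X_{\Sigma} \to \operatorname{Spec}(k)$ is proper, and by the valuative criterion this holds if and only if for every discrete valuation ring $R$ with fraction field $K$ and valuation $v \colon K^{\ast} \to \mathbb{Z}$, every morphism $\operatorname{Spec}(K) \to X_{\Sigma}$ extends uniquely to a morphism $\operatorname{Spec}(R) \to X_{\Sigma}$. Uniqueness is automatic from separatedness, so the content is existence of the extension.

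For the direction assuming $\mid \Sigma \mid = N_{\mathbb{R}}$, I would first treat a morphism $\operatorname{Spec}(K) \to X_{\Sigma}$ whose generic point lands in the dense torus $T_{N}$. Such a morphism is a $K$-point of $T_{N} = \operatorname{Hom}_{\mathbb{Z}}(M, K^{\ast})$, and composing with $v$ produces a homomorphism $M \to \mathbb{Z}$, i.e. an element $u \in N$. Because $\mid \Sigma \mid = N_{\mathbb{R}}$, there is a cone $\sigma \in \Sigma$ with $u \in \sigma$; equivalently $\langle m, u \rangle \ge 0$ for every $m \in \sigma^{\vee} \cap M$. Then $v(\chi^{m}) = \langle m, u \rangle \ge 0$ for all such $m$, so the ring homomorphism $k[\sigma^{\vee} \cap M] \to K$ corresponding to the $K$-point takes values in $R$, which is exactly the factorization $\operatorname{Spec}(R) \to U_{\sigma} \subseteq X_{\Sigma}$ extending the original map. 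This is the algebraic incarnation of the limit criterion quoted above: $u \in \sigma$ is what makes $\lim_{t \to 0}\lambda^{u}(t)$ exist in $U_{\sigma}$. To reach a general morphism $\operatorname{Spec}(K) \to X_{\Sigma}$ whose generic point lies in an orbit $O(\tau)$, I would factor it through the orbit closure $V(\tau)$, which is itself the toric variety of the star fan of $\tau$ in $N(\tau)_{\mathbb{R}}$; the hypothesis $\mid \Sigma \mid = N_{\mathbb{R}}$ forces this star fan to have full support, so the torus case applied inside $V(\tau)$ furnishes the extension.

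For the converse I would argue by contraposition. If $\mid \Sigma \mid \ne N_{\mathbb{R}}$, then $\mid \Sigma \mid$ is a finite union of closed cones and hence a proper closed subset, so its complement is open and nonempty and therefore contains a lattice point $u \in N \setminus \mid \Sigma \mid$. The one-parameter subgroup $\lambda^{u} \colon \mathbb{G}_{m} \to T_{N} \subseteq X_{\Sigma}$ is a morphism $\operatorname{Spec}(k[t^{\pm 1}]) \to X_{\Sigma}$, i.e. a $K$-point for $K = k(t)$ equipped with the discrete valuation ring $R = k[t]_{(t)}$. Since $u \notin \sigma$ for every $\sigma \in \Sigma$, the quoted limit criterion says $\lim_{t \to 0}\lambda^{u}(t)$ fails to exist in every $U_{\sigma}$, hence in $X_{\Sigma}$; concretely the extension $\operatorname{Spec}(R) \to X_{\Sigma}$ over the point $t = 0$ does not exist, so the valuative criterion fails and $X_{\Sigma}$ is not proper, hence not complete.

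The main obstacle I anticipate is the reduction in the forward direction from an arbitrary $K$-point to one landing in the torus: this requires identifying $V(\tau)$ with the toric variety of the star fan of $\tau$ and checking that $\mid \Sigma \mid = N_{\mathbb{R}}$ descends to full support of that star fan in $N(\tau)_{\mathbb{R}}$. Everything else---separatedness of $X_{\Sigma}$, the equality $v(\chi^{m}) = \langle m, u \rangle$, and the existence of a lattice point outside $\mid \Sigma \mid$---is routine and, crucially, independent of the characteristic of $k$.
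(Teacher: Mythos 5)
Your proposal is correct and follows essentially the same route as the paper: both directions run through the valuative criterion of properness together with the orbit--cone limit criterion for $\lim_{t\to 0}\lambda^{u}(t)$, the only difference being that the paper proves the relative statement (a toric morphism $\phi: X_{\Sigma_{1}} \to X_{\Sigma}$ is proper iff $\phi^{-1}(\mid\Sigma\mid)=\mid\Sigma_{1}\mid$) and specializes, while you argue the absolute case directly. Your explicit reduction of a general $\operatorname{Spec}(K)$-point to the torus case via orbit closures and star fans actually supplies a step the paper's proof leaves implicit (it silently assumes the generic point lands in the torus); the only nit is that your lattice point $u\notin\mid\Sigma\mid$ exists not merely because the complement is open and nonempty but because it is also stable under positive scaling, so a sufficiently dilated open ball inside it meets $N$.
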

\begin{proof}
    We shall prove the Proposition by proving a more general statement.  We claim that a morphism of toric varieties $ \phi: X_{\Sigma_{1}} \to X_{\Sigma} $ is proper if and only if $ \phi^{-1}(\mid \Sigma \mid) = \mid \Sigma_{1} \mid $.

    Suppose that $ u_{1} \in N_{1} $ and $ \phi(u_{1}) \in \sigma $ for $ \sigma \in \Sigma $, but $ u_{1} \notin \mid \Sigma_{1} \mid $.  If $ \phi $ is proper, then the following diagram commutes:
    \begin{equation*}
    \xymatrix{
        \mathbb{G}_{m} \ar[rr]^{\lambda^{u_{1}}} \ar[d] & & X_{\Sigma_{1}} \ar[d]^{\phi} \\
        \mathbb{A}^{1}_{k} \ar[rr] \ar@{-->}[urr]^{\exists !} & & X_{\Sigma}
        }.
    \end{equation*}
    So the valuative criterion of properness states that $ \lim_{z \to 0} \lambda^{u_{1}}(z) $ should exist.  However, \cite[Chapter 3, Normal Toric Varieties, Section 2, The Orbit-Cone Correspondence, Proposition 3.2.2]{CoxLittleSchenck} states that this limit exists if and only if $ u_{1} \in \sigma_{1} $ for some $ \sigma_{1} \in \Sigma_{1} $.  This shows that if $ \mid \Sigma_{1} \mid \subsetneq \phi^{-1}(\mid \Sigma \mid) $ then $ \phi $ is not proper.  The contrapositive of this statement is that if $ \phi $ is proper, then $ \phi^{-1}(\mid \Sigma \mid) = \mid \Sigma_{1} \mid $.

    Assume that $ R $ is a discrete valuation ring, $ \phi^{-1}(\mid \Sigma \mid) = \mid \Sigma_{1} \mid $, and that the following diagram commutes:
    \begin{equation*}
    \xymatrix{
         \operatorname{Spec}(\operatorname{Frac}(R)) \ar[d] \ar[rr] & & X_{\Sigma_{1}} \ar[d]^{\phi} \\
         \operatorname{Spec}(R) \ar[rr] & & X_{\Sigma}
         }.
    \end{equation*}
    Also assume that the image of $ \operatorname{Spec}(R) $ is in $ U_{\sigma} \subseteq X_{\Sigma} $.  The morphism from \linebreak $ \operatorname{Spec}(\operatorname{Frac}(R)) $ to $ X_{\Sigma_{1}} $ is obtained from a semigroup morphism $ \alpha: M_{1} \to \operatorname{Frac}(R)^{\ast} $.  If $ \nu $ is the valuation on $ \operatorname{Frac}(R) $ for such that $ x \in R $ if and only if $ \nu(x) \in \mathbb{N}_{0} $, then because the image of $ \operatorname{Spec}(R) $ is in $ U_{\sigma} $,
    \begin{equation*}
        \nu(\alpha \circ \phi^{-1}(\sigma^{\vee} \cap M)) \subseteq \mathbb{N}_{0}.
    \end{equation*}
    By the assumption that $ \phi^{-1}(\mid \Sigma \mid) = \mid \Sigma_{1} \mid $, there is a cone $ \sigma_{1} \in \Sigma_{1} $ such that $ \phi(\sigma_{1}) \subseteq \sigma $ and $ \nu \circ \alpha(\sigma_{1} \cap M_{1}) \subseteq \mathbb{N}_{0} $.  So there is a map $ f: \operatorname{Spec}(R) \to X_{\Sigma_{1}} $ such that the following diagram commutes:
    \begin{equation*}
    \xymatrix{
         \operatorname{Spec}(\operatorname{Frac}(R)) \ar[d] \ar[rr] & & X_{\Sigma_{1}} \ar[d]^{\phi} \\
         \operatorname{Spec}(R) \ar@{-->}[urr]^{f} \ar[rr] & & X_{\Sigma}
         }.
    \end{equation*}
    Since toric varieties are separated, $ f $ is unique.  Therefore, $ \phi $ is proper by the valuative criterion of properness.
\end{proof}
\begin{definition}
    A toric variety $ X_{\Sigma} $ \emph{has torus factors} if $ X_{\Sigma} \cong \mathbb{G}_{m}^{\ell} \times X_{\Sigma^{'}} $ for some $ \ell \in \mathbb{N} $ and fan $ \Sigma^{'} $.
\end{definition}
Every divisor of a toric variety is linearly equivalent to a torus invariant divisor.  The set of torus invariant divisors is isomorphic to $ \oplus_{\rho \in \Sigma(1)} \mathbb{Z} D_{\rho} $ by the Orbit-Cone correspondence.  The \emph{primitive ray generator} of a ray $ \rho \in \Sigma(1) $ is the minimal element $ u_{\rho} \in \rho \cap N $.  If $ u_{\rho} $ is the primitive ray generator of $ \rho $, then the set of principal, torus invariant divisors is equal to the set of characters under the identification which sends $ m $ to $ \sum_{\rho \in \Sigma(1)} \langle m, u_{\rho} \rangle D_{\rho} $.

If a toric variety $ X_{\Sigma} $ has no torus factors, then the following sequence is exact (see \cite[Chapter 4, Divisors on Toric Varieties, Section 1, Weil Divisors on Toric Varieties, Theorem 4.1.3]{CoxLittleSchenck}):
\begin{equation} \label{E:68}
\xymatrix{
    0 \ar[r] & M \ar[r] & \oplus_{\rho \in \Sigma(1)} \mathbb{Z} D_{\rho} \ar[r] & \operatorname{Cl}(X_{\Sigma}) \ar[r] & 0
    }.
\end{equation}

For a normal, toric variety $ X_{\Sigma} $, the class group is the same as the Picard group if and only if $ X_{\Sigma} $ is smooth \cite[Chapter 4, Section 2, Cartier Divisors on Toric Varieties, Proposition 4.2.6]{CoxLittleSchenck} and a normal, toric variety is $ \mathbb{Q} $-factorial if and only if $ X_{\Sigma} $ is simplicial \cite[Chapter 4, Section 2, Proposition 4.2.7]{CoxLittleSchenck}.  If $ X_{\Sigma} $ is a normal, toric variety with fan $ \Sigma $, and $ D =\sum_{\rho \in \Sigma(1)} a_{\rho} D_{\rho} $, then \cite[Chapter 4, Section 2, Theorem 4.2.8]{CoxLittleSchenck} states that the following are equivalent:
\begin{itemize}
    \item[a)] $ D $ is Cartier,
    \item[b)] $ D $ is principle on the open set $ U_{\sigma} $ for all $ \sigma \in \Sigma $,
    \item[c)] For each $ \sigma \in \Sigma $, there is $ m_{\sigma} \in M $ with $ \langle m_{\sigma}, u_{\rho} \rangle =-a_{\rho} $ for all $ \rho \in \sigma(1) $,
    \item[d)] For each $ \sigma \in \Sigma_{\max} $ there is $ m_{\sigma} \in M $ with $ \langle m_{\sigma}, u_{\rho} \rangle = -a_{\rho} $ for all $ \rho \in \sigma(1) $.
\end{itemize}
The collection $ \{m_{\sigma}\}_{\sigma \in \Sigma} $ is the Cartier data of $ D $.
\begin{definition}
    Let $ \Sigma $ be a fan of $ N_{\mathbb{R}} $.  A \emph{support function} is a function $ \phi: \mid \Sigma \mid \to \mathbb{R} $ that is linear on each cone of $ \Sigma $.  The set of support functions is denoted $ \operatorname{SF}(\Sigma) $.  A support function $ \phi $ is \emph{integral with respect to the lattice} $ N $ if $ \phi(\mid \Sigma \mid \cap N) \subseteq \mathbb{Z} $.  The set of support functions which are integral with respect to the lattice $ N $ is denoted $ \operatorname{SF}(\Sigma,N) $.
\end{definition}
By \cite[Chapter 4, Section 2, Theorem 4.2.12]{CoxLittleSchenck} $ \operatorname{SF}(\Sigma,N) $ is isomorphic to the set of torus invariant Cartier divisors.  This theorem states:
\begin{quote}
    Let $ \Sigma $ be a fan in $ N_{\mathbb{R}} $.  Then:
    \begin{itemize}
        \item[a)] Given $ D = \sum_{\rho \in \Sigma(1)} a_{\rho} D_{\rho} $ with Cartier data $ \{m_{\sigma}\}_{\sigma \in \Sigma} $, the function
            \begin{align*}
                \phi_{D} &: \mid \Sigma \mid \to \mathbb{R} \\
                & u \mapsto \phi_{D}(u) = \langle m_{\sigma}, u\rangle \quad u \in \sigma
            \end{align*}
            is a well defined support function that is integral with respect to $ N $.
        \item[b)] $ \phi_{D}(u_{\rho}) = -a_{\rho} $ for all $ \rho \in \Sigma(1) $, so that
            \begin{equation*}
                D= - \sum_{\rho \in \Sigma(1)} \phi_{D}(u_{\rho})D_{\rho}.
            \end{equation*}
        \item[c)] The map $ D \mapsto \phi_{D} $ induces an isomorphism $ \operatorname{CDiv}_{T_{N}}(X_{\Sigma}) \cong \operatorname{SF}(\Sigma,N) $.
    \end{itemize}
\end{quote}
If $ D $ is equal to $ \sum_{\rho \in \Sigma(1)} a_{\rho} D_{\rho} $ and $ m \in M $, then $ \chi^{m} \in H^{0}(X_{\Sigma}, \mathcal{O}_{X_{\Sigma}}(D)) $ if $ \operatorname{div}(\chi^{m}) + D \ge 0 $.  This is equivalent to $ \langle m, u_{\rho} \rangle+a_{\rho} \ge 0 $ for all $ \rho \in \Sigma(1) $.  So if $ D $ is equal to $ \sum_{\rho \in \Sigma(1)} a_{\rho} D_{\rho} $, then let $ P_{D} $ be the polytope $ \{m \in M_{\mathbb{R}} $ such that $ \langle m, u_{\rho} \rangle \ge -a_{\rho} $ for all $ \rho \in \Sigma(1)\} $.  The thrust of this result is \cite[Chapter 4, Section 3, The Sheaf of a Torus Invariant Divisor, Propositon 4.3.3]{CoxLittleSchenck} which states:
\begin{quote}
    If $ D $ is a torus invariant Weil divisor on $ X_{\Sigma} $, then
    \begin{equation*}
        \Gamma(X_{\Sigma}, \mathcal{O}_{X_{\Sigma}}(D)) = \oplus_{m \in P_{D} \cap M} k \cdot \chi^{m}.
    \end{equation*}
\end{quote}

It is also possible to construct a normal toric variety $ X_{\Sigma} $ as the quotient of a quasi-affine variety by a linearly reductive group.  Let $ G $ be the group below:
\begin{equation*}
     G:=\operatorname{Hom}_{\mathbb{Z}}(\operatorname{Cl}(X_{\Sigma}), \mathbb{G}_{m}).
\end{equation*}
The points of $ G $ are $ (a_{1},\dots,a_{\operatorname{card}(\Sigma(1))}) $ such that
\begin{equation*}
     \prod_{i=1}^{\operatorname{card}(\Sigma(1))} a_{i}^{\langle m, u_{\rho_{i}} \rangle} =1, \quad \forall m \in M,
\end{equation*}
by \cite[Chapter 5, Homogeneous Coordinates on Toric Varieties, Section 1, Quotient Constructions of Toric Varieties, Lemma 5.1.1]{CoxLittleSchenck}.  Because the following sequence is exact if $ X_{\Sigma} $ has no torus factors:
\begin{equation*}
    \xymatrix{
         0 \ar[r] & M \ar[r] & \mathbb{Z}^{\operatorname{card}(\Sigma(1))} \ar[r] & \operatorname{Cl}(X_{\Sigma}) \ar[r] & 0
    }
\end{equation*}
the next sequence is exact:
\begin{equation*}
    \xymatrix{
        1 \ar[r] & G \ar[r] & \mathbb{G}_{m}^{\operatorname{card}(\Sigma(1))} \ar[r] & T_{X_{\Sigma}} \ar[r] & 1
    }
\end{equation*}

If $ \sigma $ is a strongly convex, rational, polyhedral cone of $ \Sigma $, and the affine coordinate ring of $ \mathbb{A}^{\operatorname{card}(\Sigma(1))}_{k} $ is $ k[y_{\rho_{1}},\dots,y_{\rho_{\operatorname{card}(\Sigma(1))}}] $, then let $ \widehat{\sigma} $ be the monomial $ \prod_{\rho \notin \sigma} y_{\rho} $.  The ideal $ B(\Sigma) $ is the ideal $ \langle \widehat{\sigma} \rangle_{\sigma \in \Sigma} $.  The sub-variety $ Z(\Sigma) $ is the zero set of $ B(\Sigma) $.  Another way to think of the sub-variety $ Z(\Sigma) $ is via primitive collections.

A subset $ C \subseteq \Sigma(1) $ is a primitive collection if $ C \not \subseteq \sigma(1) $ for all $ \sigma \in \Sigma $, and for every proper subset $ C^{'} \subseteq C $, there is a $ \sigma \in \Sigma $ such that $ C^{'} \subseteq \sigma(1) $.  The variety $ Z(\Sigma) $ is equal to $ \cup_{C} \mathcal{V}(\langle y_{\rho} \rangle_{\rho \in C}) $.

By \cite[Chapter 5, Section 1, Theorem 5.1.11)]{CoxLittleSchenck}, the variety $ X_{\Sigma} $ is isomorphic to $ \left( \mathbb{A}^{\operatorname{card}(\Sigma(1))}_{k} \setminus Z(\Sigma) \right) //G $, and it is a geometric quotient if and only if $ \Sigma $ is simplicial.  The ring $ k[y_{\rho_{1}},\dots,y_{\rho_{\operatorname{card}(\Sigma(1))}}] $ is the Cox ring of $ X_{\Sigma} $.

While computing the Nef cone usually requires a Herculean effort, it is remarkably easy to do so for a projective, simplicial, normal, toric variety.  The reader may find the following criterion for a divisor on a projective, simplicial, normal, toric variety to be Nef in \cite[Chapter 6, Line Bundles on Toric Varieties, Section 4, The Simplicial Case, Thoerem 6.4.9]{CoxLittleSchenck}:
\begin{quote}
    Let $ X_{\Sigma} $ be a projective simplicial toric variety.  Then:
    \begin{itemize}
        \item[a)] A Cartier divisor $ D $ is nef if and only if its support function $ \phi_{D} $ satisfies:
        \begin{equation*}
        \phi_{D}(u_{\rho_{1}}+\cdots+u_{\rho_{k}}) \ge \phi_{D}(u_{\rho_{1}})+\cdots + \phi_{D}(u_{\rho_{k}}),
        \end{equation*}
        for all primitive collections $ P = \{\rho_{1},\dots,\rho_{k}\} $ of $ \Sigma $.
        \item[b)] A Cartier divisor $ D $ is ample if and only if its support function $ \phi_{D} $ satisfies:
        \begin{equation*}
            \phi_{D}(u_{\rho_{1}}+\cdots+u_{\rho_{k}}) \ge \phi_{D}(u_{\rho_{1}})+\cdots+\phi_{D}(u_{\rho_{k}}),
        \end{equation*}
        for all primitive collections $ P = \{u_{\rho_{1}},\dots,u_{\rho_{k}}\} $ of $ \Sigma $.
    \end{itemize}
\end{quote}
Torus invariant $ 1 $-cycles of an $ n $-dimensional, normal, toric variety $ X_{\Sigma} $ correspond to cones $ \tau \in \Sigma(n-1) $ by the Orbit-Cone correspondence.  A \emph{wall} is a cone $ \tau \in \Sigma(n-1) $ such that $ \tau = \sigma \cap \sigma_{1} $ for maximal dimensional cones $ \sigma $ and $ \sigma_{1} $.  If $ \tau, \sigma $ and $ \sigma_{1} $ are described below:
\begin{align*}
    \sigma &= \operatorname{Cone}(u_{\rho_{1}},\dots,u_{\rho_{n}}) \\
    \sigma_{1} &= \operatorname{Cone}(u_{\rho_{2}},\dots,u_{\rho_{n+1}}) \\
    \tau &= \operatorname{Cone}(u_{\rho_{2}},\dots,u_{\rho_{n}}),
\end{align*}
then the minimal generators $ u_{\rho_{1}},\dots,u_{\rho_{n+1}} $ are linearly dependent and so there is a linear relation
\begin{equation} \label{E:69}
    \alpha u_{\rho_{1}}+ \sum_{i=1}^{n} b_{i} u_{\rho_{i}}+\beta u_{\rho_{n+1}} =0.
\end{equation}
In addition we may assume that $ \alpha,\beta>0 $.  A relation of the form in ~\eqref{E:69} is called a \emph{wall relation}.  Let us define $ \operatorname{mult}(\sigma) $ to be the index of the sub-lattice $ \mathbb{Z} u_{\rho_{1}}+\cdots +\mathbb{Z} u_{\rho_{\ell}} $ in $ \left(\mathbb{R} u_{\rho_{1}} + \cdots + \mathbb{R} u_{\rho_{\ell}} \right) \cap N $ for a strongly convex, rational, polyhedral, simplicial cone $ \sigma $ with minimal generators $ u_{\rho_{1}},\dots,u_{\rho_{\ell}} $.  If the wall relation in ~\eqref{E:69} holds, then \cite[Chapter 6, Line Bundles on Toric Varieties, Section 4, The Simplicial Case, Proposition 6.4.4]{CoxLittleSchenck} shows that:
\begin{itemize}
    \item[a)] $ D_{\rho} \cdot V(\tau) = 0 $ for all $ \rho \notin \{\rho_{1},\dots,\rho_{n+1}\} $,
    \item[b)] $ D_{\rho_{1}} \cdot V(\tau) = \operatorname{mult}(\tau)/\operatorname{mult}(\sigma) $, and $ D_{\rho_{n+1}} \cdot V(\tau) = \operatorname{mult}(\tau)/\operatorname{mult}(\sigma_{1}) $,
    \item[c)] \begin{align*}
        D_{\rho_{i}} \cdot V(\tau) &= \frac{b_{i} \operatorname{mult}(\tau)}{\alpha \operatorname{mult}(\sigma)} \\
        &= \frac{b_{i} \operatorname{mult}(\tau)}{\beta \operatorname{mult}(\sigma_{1})}.
        \end{align*}
\end{itemize}
\bibliographystyle{amsplain}
\bibliography{Counterexample022024}
\end{document}